\numberwithin{equation}{section}
\newtheorem{theorem}{Theorem}[section]
\newtheorem{definition}[theorem]{Definition}
\newtheorem{proposition}[theorem]{Proposition}
\newtheorem{lemma}[theorem]{Lemma}
\newtheorem{remark}[theorem]{Remark}
\newtheorem{remarkanddef}[theorem]{Remark and Definition}
\newtheorem{corollary}[theorem]{Corollary}
\newcommand\R{{\mathbb R}}
\newcommand\N{{\mathbb N}}
\newcommand\sphere{{\mathbb S}^{n-1}}
\newcommand\capa{\operatorname{Cap}(\Omega)}
\newcommand\sarea{\left|\mathbb{S}^{n-1}\right|}
\newcommand\crit{\operatorname{Crit}(u)}
\newcommand{\definedas}{\mathrel{\raise.095ex\hbox{\rm :}\mkern-5.2mu=}}
\newcommand{\asdefined}{\mathrel{=\mkern-5.2mu}\raise.095ex\hbox{\rm :}\;}
\newcommand{\diver}{\operatorname{div}}
\newcommand{\supp}{\operatorname{supp}}
\newcommand{\critDu}{\operatorname{Crit}(\vert Du\vert^{2})}
\title{A new proof of the Willmore inequality\\ via a divergence inequality}
\author{Carla Cederbaum\thanks{cederbaum@math.uni-tuebingen.de}\; and Anabel Miehe \\Mathematics Department \\Eberhard Karls Universit\"at T\"ubingen}
\date{}
\begin{document}
\maketitle

\begin{abstract} 
We present a new proof of the Willmore inequality for an arbitrary bounded domain $\Omega\subset\R^{n}$ with smooth boundary. Our proof is based on a parametric geometric inequality involving the electrostatic potential for the domain~$\Omega$; this geometric inequality is derived from a geometric differential inequality in divergence form. Our parametric geometric inequality also allows us to give new proofs of the quantitative Willmore-type and the weighted Minkowski inequalities by Agostiniani and Mazzieri.
\end{abstract}
	
\section{Introduction and results}\label{section intro}
We consider a bounded domain $\Omega\subset \R^n$ for $n\geq 3$ with smooth boundary $\partial\Omega$, with smooth \emph{(electrostatic) potential} $u\colon\R^n\setminus\Omega \to \R$ solving 
\begin{align}\label{id-problem}
\begin{cases}
	\Delta u= 0 &\text{in } \R^n\setminus\overline{\Omega},\\
	u=1 &\text{on } \partial\Omega,\\
	u(x)\to 0 &\text{as } |x|\to\infty,
\end{cases}
\end{align}
where $\Delta$ denotes the (Euclidean) Laplacian. It is well-known that every domain $\Omega$ as above carries a unique electrostatic potential $u$, see \Cref{section preliminaries}.

The main goal of this work is to prove the following parametric geometric inequality which can be exploited to prove the Willmore inequality as well as the quantitative Willmore-type and the weighted Minkowski inequalities by Agostiniani and Mazzieri~\cite{Agostiniani.2020}, see \Cref{willmore cor,quant will cor,weighted mink cor}. See \Cref{section relation} for a brief account of related results and \Cref{section discussion} for a more detailed comparison of our method with the monotonicity formula approach based on the electrostatic potential taken in \cite{Agostiniani.2020}.

\begin{theorem}[Parametric geometric inequality]\label{geom inequ thm}
Let $n\geq3$ and consider a parameter $\beta\in\R$ satisfying $\beta\geq \frac{n-2}{n-1}$. Let $\Omega\subset\R^{n}$ be a bounded domain with smooth boundary~$\partial\Omega$. Let $u$ be the electrostatic potential for $\Omega$, i.e., the unique smooth solution to \eqref{id-problem}, and consider parameters $c,d\in \R$ satisfying $c+d\geq 0$ and $d\geq0$. One then has
\begin{align}\label{geom inequ}
\begin{split} 
& d(n-2)^{\beta+1}\sarea\capa^{\frac{n-2-\beta}{n-2}} \\
 & \quad\leq \beta (c+d)\int_{\partial\Omega} |Du|^{\beta}Hd\sigma+\left[d-\frac{(n-1)\beta(c+d)}{n-2}\right]\int_{\partial\Omega} |Du|^{\beta+1}\,d\sigma,
\end{split}
\end{align} 
where $Du$ denotes the (Euclidean) gradient of $u$, $H$ denotes the (Euclidean) mean curvature of $\partial\Omega$ with respect to the (Euclidean) unit normal $\nu$ pointing towards infinity, $d\sigma$ denotes the (Euclidean) area element induced on $\partial\Omega$, $\sarea$ denotes the surface area of the unit sphere, and $\capa$ denotes the electrostatic capacity of $\Omega$ as defined in \Cref{capacity}. Moreover, equality holds in \eqref{geom inequ} if and only if $\Omega$ is a round ball (unless $c=d=0$).
\end{theorem}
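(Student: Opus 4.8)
The plan is to exhibit a vector field $X = X_{\beta,c,d}$ on $\R^{n}\setminus\overline{\Omega}$, assembled from $u$, $Du$ and $D^{2}u$, that obeys a pointwise \emph{divergence inequality}
\[
\diver X\leq 0 \qquad\text{on } \R^{n}\setminus(\overline{\Omega}\cup\crit),
\]
and then to integrate this inequality over $B_{\rho}\setminus\overline{\Omega}$ and let $\rho\to\infty$. The vector field will be engineered so that the flux of $X$ through $\partial\Omega$ reproduces the right-hand side of \eqref{geom inequ} and the flux through the coordinate spheres $\{|x| = \rho\}$ converges, as $\rho\to\infty$, to the capacity term on the left-hand side; the divergence inequality then yields \eqref{geom inequ} immediately via the divergence theorem.

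To build $X$ I would combine, with suitable powers of $u$ as weights, the two natural blocks $|Du|^{\beta-1}D|Du|$ (equivalently, a constant multiple of $D|Du|^{\beta}$) and $|Du|^{\beta}Du$. The weights are pinned down by two requirements. First, on $\partial\Omega$ --- where $u\equiv 1$ and $\nu = -Du/|Du|$ --- I want $X\cdot\nu = \beta(c+d)\,|Du|^{\beta}H + \big(d - \tfrac{(n-1)\beta(c+d)}{n-2}\big)|Du|^{\beta+1}$; here $H$ enters through the identity $H = -|Du|^{-1}\partial_{\nu}|Du|$, valid on the level set $\partial\Omega = \{u=1\}$ and itself a consequence of $\Delta u = 0$ together with $\diver\nu = H$. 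Second, I want the distinguished block $u^{-\frac{(n-1)\beta}{n-2}}|Du|^{\beta}Du$ to carry the capacity: using the asymptotic expansion $u(x) = \capa\,|x|^{2-n} + O(|x|^{1-n})$ recalled in \Cref{section preliminaries}, one checks that its flux through $\{|x|=\rho\}$ tends, up to sign, to $(n-2)^{\beta+1}\sarea\,\capa^{\frac{n-2-\beta}{n-2}}$. The exponent $-\tfrac{(n-1)\beta}{n-2}$ is precisely the one for which $|Du|^{\beta}u^{-\frac{(n-1)\beta}{n-2}}$ is constant on $\R^{n}\setminus\overline{\Omega}$ when $\Omega$ is a ball, which already points to the equality case.

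The core of the argument is the pointwise inequality $\diver X\leq 0$. I would compute $\diver X$ using $\Delta u = 0$ and the Bochner identity $\tfrac12\Delta|Du|^{2} = |D^{2}u|^{2}$, and then reorganise the resulting expression so that, after invoking the sign hypotheses $c+d\geq 0$, $d\geq 0$ and the maximum-principle bound $0 < u\leq 1$ on $\R^{n}\setminus\overline{\Omega}$, it is dominated by a nonnegative multiple of
\[
-\Big(|D^{2}u|^{2} - \frac{n}{n-1}\,\big|D|Du|\big|^{2}\Big),
\]
which is $\leq 0$ by the refined Kato inequality for harmonic functions, the prefactor having the correct sign exactly because $\beta\geq\frac{n-2}{n-1}$. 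This pointwise statement --- the ``geometric differential inequality in divergence form'' of the abstract --- is the step I expect to be the main obstacle: one has to arrange the bookkeeping so that nothing stronger than the refined Kato inequality and the elementary constraints on $c$, $d$, $u$ is needed.

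It then remains to integrate. I would apply the divergence theorem on $B_{\rho}\setminus\overline{\Omega}$, taking care to justify it across the critical set $\crit$ --- where the building blocks of $X$ may be singular --- using that $\crit$ has vanishing Lebesgue measure (indeed $u$ is real-analytic) and is removable for this integration by parts, as in the literature referenced in \Cref{section relation}. By construction the boundary flux on $\partial\Omega$ equals the right-hand side of \eqref{geom inequ}, while the flux on $\{|x|=\rho\}$ converges as $\rho\to\infty$ --- the capacity-carrying block contributing $d(n-2)^{\beta+1}\sarea\,\capa^{\frac{n-2-\beta}{n-2}}$ and the remaining blocks having vanishing flux in the limit when $\beta > \frac{n-2}{n-1}$, the borderline value $\beta = \frac{n-2}{n-1}$ requiring a separate check --- to the left-hand side. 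Combined with $\diver X\leq 0$, this is \eqref{geom inequ}. For the equality statement: if $(c,d)\neq(0,0)$ and equality holds, then $\int_{\R^{n}\setminus\overline{\Omega}}\diver X = 0$, forcing $\diver X\equiv 0$ and hence equality in the refined Kato inequality at every point; the rigidity case of that inequality then forces $D^{2}u$ to be umbilic, so the level sets of $u$ are concentric round spheres on each of which $|Du|$ is constant, whence $\Omega$ is a round ball (and a direct computation confirms equality for balls). If $c = d = 0$, then \eqref{geom inequ} reads $0\leq 0$ and holds with equality for every $\Omega$, which explains the stated exception. Besides the pointwise divergence inequality, the remaining delicate points are the integration by parts near $\crit$ and the treatment of the borderline exponent $\beta = \frac{n-2}{n-1}$.
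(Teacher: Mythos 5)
Your proposal follows essentially the same route as the paper: a vector field built from the blocks $u^{\text{power}}D|Du|^{\beta}$ and $u^{\text{power}}|Du|^{\beta}Du$ (yours is $-Z$ in the paper's notation), a pointwise divergence inequality obtained from the Bochner formula and the refined Kato inequality with the prefactor $a_{\beta}=\tfrac{\beta}{4}(\beta-\tfrac{n-2}{n-1})$ nonnegative precisely for $\beta\geq\tfrac{n-2}{n-1}$, integration by the divergence theorem over an annular region with one boundary piece being $\partial\Omega$ and the other sent to infinity, and rigidity via the equality case of Kato forcing $\partial\Omega$ to be totally umbilic.

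The one point where you treat as routine something the paper has to work hard for is the removability of $\crit$. Observing that $\crit$ is Lebesgue-negligible (indeed finite, by real analyticity and Sard) does not by itself justify passing the divergence theorem across it: when $\beta<2$ the term $D|Du|^{\beta}$ blows up near critical points, so one must actually control the flux of the vector field through small tubes around $\crit$. The paper's \Cref{lem:regZ} does this by a cutoff $\xi_{\varepsilon}\circ|Du|^{2}$, the coarea formula, and --- crucially --- a \emph{second} application of the refined Kato inequality, turned into a differential inequality for $\zeta(r)=\int_{U\cap\partial W_{r}}|D|Du|^{2}|\,d\sigma$ that yields the decay $\zeta(r)\lesssim r^{n/2(n-1)}$ needed to kill the cutoff contribution; and the threshold $\beta=\tfrac{n-2}{n-1}$ is then handled by a separate monotone-convergence argument in $\beta$. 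You cannot simply cite the Agostiniani--Mazzieri literature for this, because there the corresponding analysis is carried out after a conformal change to a cylindrical picture where the singularity of the integrand is milder (the paper notes this explicitly in a footnote). You do flag the critical-set issue and the borderline exponent as delicate, which is to your credit, but the sentence ``has vanishing Lebesgue measure \ldots and is removable \ldots as in the literature'' undersells the main technical novelty of the paper's proof.
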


Choosing $\beta=n-2$, we obtain the classical Willmore inequality \eqref{willmore} from \eqref{geom inequ} as follows: We first choose $c=1$, $d=0$ in \eqref{geom inequ} from which we find 
\begin{align}\label{eq:A}
\int_{\partial\Omega}\vert Du\vert^{n-1}\,d\sigma&\leq (n-2)^{n-1}\int_{\partial\Omega}\left\vert \frac{H}{n-1}\right\vert^{n-1}d\sigma
\end{align}
by applying the H\"older inequality with $p=n-1$, $p^{*}=\frac{n-1}{n-2}$ to the right hand side and performing some simple algebraic manipulations. On the other hand, choosing $c=-1$, $d=1$ in \eqref{geom inequ} gives 
\begin{align}\label{eq:B}
(n-2)^{n-1}\sarea&\leq \int_{\partial\Omega}\vert Du\vert^{n-1}\,d\sigma,
\end{align}
which proves the Willmore inequality when combined with \eqref{eq:A}. As the equality assertions for \eqref{eq:A}, \eqref{eq:B} (i.e., those of \Cref{geom inequ thm}) are stronger than those of the H\"older inequality, the equality claim readily follows.
 \begin{corollary}[$(n-1)$-dimensional Willmore inequality {\cite{Agostiniani.2020,Chen.1971,Chen.1971b}}]\label{willmore cor}
Let $n\geq3$ and let $\Omega\subset \R^n$ be a bounded domain with smooth boundary. Then 
\begin{align}\label{willmore}
   \sarea \leq \int_{\partial\Omega}\left(\frac{\vert H\vert}{n-1}\right)^{n-1}d\sigma.
\end{align}
Moreover, equality holds in \eqref{willmore} if and only if $\Omega$ is a round ball.
\end{corollary}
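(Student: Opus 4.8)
The plan is to obtain \Cref{willmore cor} as a purely formal consequence of \Cref{geom inequ thm}, specialized to $\beta=n-2$ (which is admissible, since $\beta\geq\frac{n-2}{n-1}$ is equivalent to $(n-2)^2\geq0$), via two complementary choices of the parameters $(c,d)$. First I would take $c=1$, $d=0$, which is admissible because $c+d=1\geq0$ and $d=0\geq0$. With $\beta=n-2$ the left-hand side of \eqref{geom inequ} vanishes, the coefficient $\beta(c+d)$ of $\int_{\partial\Omega}|Du|^{\beta}H\,d\sigma$ becomes $n-2$, and the coefficient $d-\frac{(n-1)\beta(c+d)}{n-2}$ of $\int_{\partial\Omega}|Du|^{\beta+1}\,d\sigma$ becomes $-(n-1)$. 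Hence \eqref{geom inequ} reduces to $(n-1)\int_{\partial\Omega}|Du|^{n-1}\,d\sigma\leq(n-2)\int_{\partial\Omega}|Du|^{n-2}H\,d\sigma\leq(n-2)\int_{\partial\Omega}|Du|^{n-2}|H|\,d\sigma$. Applying the Hölder inequality with conjugate exponents $n-1$ and $\frac{n-1}{n-2}$ to the product $|Du|^{n-2}\cdot|H|$, dividing by $\bigl(\int_{\partial\Omega}|Du|^{n-1}\,d\sigma\bigr)^{\frac{n-2}{n-1}}$, and raising to the power $n-1$ then yields exactly \eqref{eq:A}.

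Next I would take $c=-1$, $d=1$, which is admissible because $c+d=0\geq0$ and $d=1\geq0$. Since $c+d=0$, every term carrying the factor $c+d$ drops out; moreover with $\beta=n-2$ the exponent $\frac{n-2-\beta}{n-2}$ on $\capa$ is zero and the remaining coefficient of $\int_{\partial\Omega}|Du|^{\beta+1}\,d\sigma$ equals $d=1$, so \eqref{geom inequ} collapses to \eqref{eq:B}, i.e. $(n-2)^{n-1}\sarea\leq\int_{\partial\Omega}|Du|^{n-1}\,d\sigma$. Chaining \eqref{eq:B} into \eqref{eq:A} and cancelling the positive factor $(n-2)^{n-1}$ produces \eqref{willmore}. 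For the rigidity statement, if equality holds in \eqref{willmore} then equality must hold in both \eqref{eq:A} and \eqref{eq:B}; equality in \eqref{eq:B} is precisely equality in \eqref{geom inequ} for the choice $(c,d)=(-1,1)$, which is not $(0,0)$, so the equality clause of \Cref{geom inequ thm} forces $\Omega$ to be a round ball. Conversely, a direct computation of $u$, $|Du|$ and $H$ on a round ball shows that \eqref{willmore} holds with equality, completing the characterization.

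I do not expect a genuine obstacle here: given \Cref{geom inequ thm}, the corollary is bookkeeping. The only point needing a brief remark is that $\int_{\partial\Omega}|Du|^{n-1}\,d\sigma>0$, so that the division in the Hölder step above is legitimate — and this is immediate, since $u$ is non-constant (equivalently, if this integral vanished then \eqref{eq:B} would read $(n-2)^{n-1}\sarea\leq0$, a contradiction).
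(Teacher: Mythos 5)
Your proposal is correct and follows exactly the argument given in the paper: specialize \Cref{geom inequ thm} to $\beta=n-2$ with $(c,d)=(1,0)$ to get \eqref{eq:A} via H\"older, then with $(c,d)=(-1,1)$ to get \eqref{eq:B}, and chain them. Your rigidity argument (equality in \eqref{willmore} forces equality in \eqref{eq:B}, which is equality in \eqref{geom inequ} for a nonzero $(c,d)$, hence $\Omega$ is a ball) is a slightly more explicit unpacking of the paper's remark that the equality assertions of \Cref{geom inequ thm} are stronger than H\"older's.
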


The same argument for arbitrary $\beta\geq\frac{n-2}{n-1}$ and $p\definedas\beta+1$ gives the following generalization of the classical Willmore inequality. 
\begin{corollary}[Generalized Willmore inequality {\cite[Corollary 4.4]{Agostiniani.2020}}]\label{willmore cor p}
Let $n\geq3$, $p\geq\frac{2n-3}{n-1}$, and let $\Omega\subset \R^n$ be a bounded domain with smooth boundary. Then 
\begin{align}\label{eq:Minkowskip}
\sarea \capa^{\frac{n-1-p}{n-2}}&\leq \int_{\partial\Omega}\left(\frac{\vert H\vert}{n-1}\right)^{p}d\sigma.
\end{align}
Moreover, equality holds in \eqref{willmore} if and only if $\Omega$ is a round ball.
\end{corollary}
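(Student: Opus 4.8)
The plan is to run the same two‑step argument that produced \Cref{willmore cor}, but with the free exponent $\beta\definedas p-1$ in place of $n-2$. The first thing to notice is that the hypothesis $p\geq\frac{2n-3}{n-1}$ is exactly equivalent to $\beta\geq\frac{n-2}{n-1}$, so \Cref{geom inequ thm} is available for this value of $\beta$. \emph{Step 1.} Apply \eqref{geom inequ} with $c=1$, $d=0$, which is admissible since $c+d=1\geq0$ and $d=0\geq0$. The left‑hand side vanishes, and after dividing by $\beta>0$ one is left with
\[
\frac{n-1}{n-2}\int_{\partial\Omega}|Du|^{\beta+1}\,d\sigma\leq\int_{\partial\Omega}|Du|^{\beta}H\,d\sigma .
\]
Bounding $H\leq|H|$ and applying the H\"older inequality to the pair $|Du|^{\beta}$, $|H|$ with conjugate exponents $\frac{\beta+1}{\beta}$ and $\beta+1$ gives
\[
\int_{\partial\Omega}|Du|^{\beta}H\,d\sigma\leq\left(\int_{\partial\Omega}|Du|^{\beta+1}\,d\sigma\right)^{\frac{\beta}{\beta+1}}\left(\int_{\partial\Omega}|H|^{\beta+1}\,d\sigma\right)^{\frac{1}{\beta+1}}.
\]
Since $u$ is non‑constant, $\int_{\partial\Omega}|Du|^{\beta+1}\,d\sigma>0$ (indeed $|Du|>0$ on $\partial\Omega$ by the Hopf lemma), so I may divide by its $\frac{\beta}{\beta+1}$‑th power and then raise to the power $\beta+1$; recalling $\beta+1=p$, this yields the analogue of \eqref{eq:A}, namely $\int_{\partial\Omega}|Du|^{p}\,d\sigma\leq(n-2)^{p}\int_{\partial\Omega}\big|\tfrac{H}{n-1}\big|^{p}\,d\sigma$.

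\emph{Step 2.} Apply \eqref{geom inequ} again, now with $c=-1$, $d=1$, which is admissible since $c+d=0\geq0$ and $d=1\geq0$. Because $c+d=0$, the mean‑curvature term and the $\beta$‑dependent part of the coefficient of $\int_{\partial\Omega}|Du|^{\beta+1}\,d\sigma$ both drop out, and \eqref{geom inequ} reduces to
\[
(n-2)^{\beta+1}\sarea\,\capa^{\frac{n-2-\beta}{n-2}}\leq\int_{\partial\Omega}|Du|^{\beta+1}\,d\sigma .
\]
Rewriting the exponents via $\beta+1=p$ and $\frac{n-2-\beta}{n-2}=\frac{n-1-p}{n-2}$ produces the analogue of \eqref{eq:B}. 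Chaining it with the inequality from Step 1 and cancelling the common positive factor $(n-2)^{p}$ yields \eqref{eq:Minkowskip}. For the equality statement, observe that neither parameter choice above is the excluded case $c=d=0$, so by \Cref{geom inequ thm} equality in either application of \eqref{geom inequ} forces $\Omega$ to be a round ball; hence equality in \eqref{eq:Minkowskip} implies $\Omega$ is a round ball. Conversely, on a round ball $|Du|$ and $H$ are constant along $\partial\Omega$, so the H\"older step is also an equality and the whole chain collapses to equalities.

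I do not expect a genuine obstacle here, as \Cref{geom inequ thm} carries all of the analytic weight. The only points needing care are the verification of the admissibility constraints $c+d\geq0$, $d\geq0$ in the two applications, the positivity of $\int_{\partial\Omega}|Du|^{\beta+1}\,d\sigma$ used to divide in Step 1, and the exponent bookkeeping that turns $(n-2)^{\beta+1}$, $\frac{n-2-\beta}{n-2}$ into $(n-2)^{p}$, $\frac{n-1-p}{n-2}$ — that last identity being the step I would double‑check most carefully.
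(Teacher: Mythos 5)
Your proposal is correct and follows exactly the route the paper intends: set $\beta=p-1$, note the parameter range match, apply \eqref{geom inequ} once with $(c,d)=(1,0)$ and once with $(c,d)=(-1,1)$, use H\"older with conjugate exponents $\frac{\beta+1}{\beta}$, $\beta+1$, and chain the two resulting inequalities. The paper itself only remarks that ``the same argument'' as for \Cref{willmore cor} works with $p=\beta+1$, and your write-up is precisely that argument carried out in detail, including the correct handling of the equality case via the rigidity clause of \Cref{geom inequ thm}.
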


In addition to providing a new proof for the classical Willmore inequality, \Cref{geom inequ thm} also readily provides a new proof of the weighted Minkowski and the quantitative Willmore-type inequalities introduced by Agostiniani and Mazzieri. Specifically, choosing $\beta=\frac{n-2}{n-1}$, $c=-1$, $d=1$ in \Cref{geom inequ thm} and using \Cref{capacity in terms of potential} relating $\capa$ to $\int_{\partial\Omega}\vert Du\vert\,d\sigma$ gives the following weighted Minkowski inequality.

\begin{corollary}[Weighted Minkowski inequality, {\cite[Theorem 1.3]{Agostiniani.2020}}]\label{weighted mink cor}
Let $n\geq3$ and let $\Omega\subset \R^n$ be a bounded domain with smooth boundary and electrostatic potential $u$. Then
 \begin{align}\label{weigh mink}
 \begin{split}
 &\frac{n-2}{n-1}\int_{\partial\Omega}\left|Du\right|^{\frac{n-2}{n-1}}\left[H-\left(\frac{n-1}{n-2}\right)\left|Du\right|\right]d\sigma\\
 &\quad\leq (n-2)^{\frac{2n-3}{n-1}}\sarea^{\frac{n-2}{n-1}}\left(\frac{\capa}{|\partial\Omega|} \right)^{\frac{n-2}{n-1}}\!\left[\,\,\int_{\partial \Omega}\frac{H}{n-1}\,d\bar{\sigma}-\sarea ^{\frac{1}{n-1}}|\partial\Omega|^{\frac{n-2}{n-1}} \right]\!\!,
\end{split}    	
\end{align}
where
\begin{equation*}
  d\bar{\sigma}\definedas\left(\frac{|Du|}{\frac{1}{|\partial\Omega|}\int_{\partial \Omega}|Du|\,d\sigma} \right)^{\frac{n-2}{n-1}}d\sigma
  \end{equation*}
is a weighted area measure on $\partial\Omega$ and $\vert\partial\Omega\vert$ denotes the area of $\partial\Omega$ with respect to $d\sigma$. Moreover, equality holds in \eqref{weigh mink} if and only if $\Omega$ is a round ball.
 \end{corollary}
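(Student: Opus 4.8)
The plan is to obtain \eqref{weigh mink} as a direct specialization of \Cref{geom inequ thm}, followed by a purely algebraic rewriting. First I would instantiate \eqref{geom inequ} with $\beta=\tfrac{n-2}{n-1}$, $c=-1$, $d=1$ and check admissibility: $\beta=\tfrac{n-2}{n-1}$ meets $\beta\geq\tfrac{n-2}{n-1}$ with equality, $c+d=0\geq0$, and $d=1\geq0$. Because $\beta(c+d)=0$, the term $\beta(c+d)\int_{\partial\Omega}|Du|^{\beta}H\,d\sigma$ vanishes and the coefficient $d-\tfrac{(n-1)\beta(c+d)}{n-2}$ reduces to $d=1$; since moreover $\tfrac{n-2-\beta}{n-2}=\tfrac{n-2}{n-1}$ and $\beta+1=\tfrac{2n-3}{n-1}$, inequality \eqref{geom inequ} collapses to the capacitary estimate
\begin{equation*}
(n-2)^{\frac{2n-3}{n-1}}\sarea\,\capa^{\frac{n-2}{n-1}}\leq\int_{\partial\Omega}|Du|^{\frac{2n-3}{n-1}}\,d\sigma .
\end{equation*}

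Next I would remove $\capa$ from this estimate by invoking \Cref{capacity in terms of potential}, which expresses $\capa$ through the boundary flux, namely $\capa=\tfrac{1}{(n-2)\sarea}\int_{\partial\Omega}|Du|\,d\sigma$ in the normalization of \Cref{capacity}. Substituting this and collecting the powers of $n-2$ and of $\sarea$ (the exponents combine as $\tfrac{2n-3}{n-1}-\tfrac{n-2}{n-1}=1$ and $1-\tfrac{n-2}{n-1}=\tfrac{1}{n-1}$) turns it into
\begin{equation*}
(n-2)\,\sarea^{\frac{1}{n-1}}\left(\int_{\partial\Omega}|Du|\,d\sigma\right)^{\frac{n-2}{n-1}}\leq\int_{\partial\Omega}|Du|^{\frac{2n-3}{n-1}}\,d\sigma ,
\end{equation*}
a clean inequality for $|Du|$ on $\partial\Omega$ that genuinely carries the content of \Cref{geom inequ thm} at these parameters.

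The remaining step is to verify that this last display is nothing but \eqref{weigh mink} rewritten. Here I would unfold $d\bar\sigma=\left(\tfrac{|Du|}{\frac{1}{|\partial\Omega|}\int_{\partial\Omega}|Du|\,d\sigma}\right)^{\frac{n-2}{n-1}}d\sigma$, expand the bracket $\left[H-\tfrac{n-1}{n-2}|Du|\right]$ on the left of \eqref{weigh mink}, and again use \Cref{capacity in terms of potential} to rewrite the prefactor $(n-2)^{\frac{2n-3}{n-1}}\sarea^{\frac{n-2}{n-1}}\left(\capa/|\partial\Omega|\right)^{\frac{n-2}{n-1}}$ as $(n-2)\left(\tfrac{1}{|\partial\Omega|}\int_{\partial\Omega}|Du|\,d\sigma\right)^{\frac{n-2}{n-1}}$. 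The point I would highlight is that the mean-curvature contribution $\tfrac{n-2}{n-1}\int_{\partial\Omega}|Du|^{\frac{n-2}{n-1}}H\,d\sigma$ then occurs \emph{identically} on both sides of \eqref{weigh mink} and cancels; what survives is exactly the $|Du|$-inequality of the previous paragraph. Finally, the equality case carries over directly: since $(c,d)=(-1,1)\neq(0,0)$, \Cref{geom inequ thm} forces the capacitary estimate, and hence \eqref{weigh mink}, to be an equality precisely when $\Omega$ is a round ball.

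I do not expect a genuine obstacle here: all the analysis sits in \Cref{geom inequ thm}, and the corollary is a matter of specializing the parameters and unwinding definitions. The only place that demands care is the bookkeeping in the last step --- tracking the normalizing constant hidden in $d\bar\sigma$, the fractional powers of $\sarea$, and the flux constant from \Cref{capacity in terms of potential}, and then checking that the two $H$-integrals really cancel rather than reinforce one another.
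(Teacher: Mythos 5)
Your proposal is correct and follows precisely the route the paper indicates: instantiate \eqref{geom inequ} at $\beta=\tfrac{n-2}{n-1}$, $c=-1$, $d=1$ (so the $H$-integral drops out of \eqref{geom inequ} and the remaining coefficient is $1$), then use \Cref{capacity in terms of potential} to swap $\capa$ for $\tfrac{1}{(n-2)\sarea}\int_{\partial\Omega}|Du|\,d\sigma$; your algebra verifying that the $H$-integrals on both sides of \eqref{weigh mink} cancel identically and that the remaining normalizations match is exactly right, and the rigidity transfer from \Cref{geom inequ thm} with $(c,d)\neq(0,0)$ is also as intended.
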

 
Applying in addition the H\"older inequality for $p=n-1$, $p^{*}=\frac{n-1}{n-2}$ to the first term in the square bracket in \eqref{weigh mink}, we obtain the quantitative Willmore-type inequality~\eqref{quant will}. The equality case can be handled as above.
\begin{corollary}[Quantitative Willmore-type inequality, {\cite[Theorem 1.2]{Agostiniani.2020}}]\label{quant will cor}
Let $n\geq3$ and let $\Omega\subset \R^n$ be a bounded domain with smooth boundary and potential $u$. Then
 \begin{align}\label{quant will}
 \begin{split}
 &\frac{n-2}{n-1}\int_{\partial\Omega}|Du|^{\frac{n-2}{n-1}}\left[H-\left(\frac{n-1}{n-2}\right)|Du|\right]d\sigma\\
 &\quad\leq (n-2)^{\frac{2n-3}{n-1}}\sarea ^{\frac{n-2}{n-1}}\capa^{\frac{n-2}{n-1}}\!\left[\!\left(\;\int_{\partial\Omega}\left(\frac{\vert H\vert}{n-1}\right)^{n-1}\!d\sigma\!\right)^{\frac{1}{n-1}}\!\!\!-\sarea ^{\frac{1}{n-1}}\right]\!\!.
 \end{split}
 \end{align}
Moreover, equality holds in \eqref{quant will} if and only if $\Omega$ is a round ball.
\end{corollary}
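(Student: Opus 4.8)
The plan is to obtain \eqref{quant will} directly from the weighted Minkowski inequality \eqref{weigh mink} of \Cref{weighted mink cor} by bounding the first term inside its right-hand square bracket with H\"older's inequality; no analytic input beyond \Cref{weighted mink cor} is needed. Abbreviate $m\definedas\frac{1}{|\partial\Omega|}\int_{\partial\Omega}|Du|\,d\sigma$, so that $d\bar{\sigma}=\left(|Du|/m\right)^{\frac{n-2}{n-1}}d\sigma$ by the definition of the weighted measure.

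First I would estimate $H\leq|H|$ pointwise on $\partial\Omega$ and then apply H\"older's inequality with conjugate exponents $p=n-1$ and $p^{*}=\frac{n-1}{n-2}$ to the two nonnegative functions $\frac{|H|}{n-1}$ and $\left(|Du|/m\right)^{\frac{n-2}{n-1}}$, which yields
\begin{align*}
\int_{\partial\Omega}\frac{H}{n-1}\,d\bar{\sigma}\leq\left(\int_{\partial\Omega}\left(\frac{|H|}{n-1}\right)^{n-1}d\sigma\right)^{\frac{1}{n-1}}\left(\int_{\partial\Omega}\frac{|Du|}{m}\,d\sigma\right)^{\frac{n-2}{n-1}}.
\end{align*}
The reason for choosing these particular exponents is that $\left((|Du|/m)^{\frac{n-2}{n-1}}\right)^{p^{*}}=|Du|/m$, so the last integral equals $|\partial\Omega|$ by the very definition of $m$; hence
\begin{align*}
\int_{\partial\Omega}\frac{H}{n-1}\,d\bar{\sigma}\leq|\partial\Omega|^{\frac{n-2}{n-1}}\left(\int_{\partial\Omega}\left(\frac{|H|}{n-1}\right)^{n-1}d\sigma\right)^{\frac{1}{n-1}}.
\end{align*}

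Next I would substitute this bound into \eqref{weigh mink}. Since the prefactor $(n-2)^{\frac{2n-3}{n-1}}\sarea^{\frac{n-2}{n-1}}\left(\capa/|\partial\Omega|\right)^{\frac{n-2}{n-1}}$ multiplying the square bracket on the right-hand side is strictly positive, replacing $\int_{\partial\Omega}\frac{H}{n-1}\,d\bar{\sigma}$ by the larger quantity just obtained only enlarges that side; pulling the common factor $|\partial\Omega|^{\frac{n-2}{n-1}}$ out of the resulting square bracket and absorbing it into the prefactor via $\left(\capa/|\partial\Omega|\right)^{\frac{n-2}{n-1}}|\partial\Omega|^{\frac{n-2}{n-1}}=\capa^{\frac{n-2}{n-1}}$ reproduces \eqref{quant will} exactly. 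For the equality statement, note first that if $\Omega$ is a round ball then $|Du|$ and $H$ are constant on $\partial\Omega$, so $d\bar{\sigma}=d\sigma$, the H\"older step is an equality, and \eqref{weigh mink} is an equality by \Cref{weighted mink cor}; hence equality holds in \eqref{quant will}. Conversely, the left-hand sides of \eqref{weigh mink} and \eqref{quant will} are literally the same expression, and I have just shown that the right-hand side of \eqref{quant will} dominates that of \eqref{weigh mink}, so equality in \eqref{quant will} forces equality in \eqref{weigh mink}, which by \Cref{weighted mink cor} happens only for round balls. I expect no genuine obstacle here: the whole argument is a single application of H\"older's inequality to \eqref{weigh mink}. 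The only delicate points are the bookkeeping that $p=n-1$ and $p^{*}=\frac{n-1}{n-2}$ form precisely the dual pair for which the $|Du|$-weight built into $d\bar{\sigma}$ integrates back to $|\partial\Omega|$, and tracking signs carefully enough to be sure that enlarging the bracketed term on the right of \eqref{weigh mink} is legitimate.
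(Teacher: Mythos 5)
Your proposal is correct and follows essentially the same route as the paper: apply H\"older's inequality with $p=n-1$, $p^{*}=\frac{n-1}{n-2}$ to the first term in the square bracket of \eqref{weigh mink}, using that the $p^{*}$-th power of the weight $(|Du|/m)^{\frac{n-2}{n-1}}$ integrates to $|\partial\Omega|$, and then absorb $|\partial\Omega|^{\frac{n-2}{n-1}}$ into the $\bigl(\capa/|\partial\Omega|\bigr)^{\frac{n-2}{n-1}}$ prefactor. Your treatment of the equality case (round ball forces equality in both \eqref{weigh mink} and H\"older; conversely equality in \eqref{quant will} forces equality in \eqref{weigh mink}, hence a round ball by \Cref{weighted mink cor}) is exactly the argument the paper invokes by the phrase ``the equality case can be handled as above.''
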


Last but not least, note that it is no accident that we only use the choices $c=1$, $d=0$ and $c=-1$, $d=1$ when applying \eqref{geom inequ}. In fact, \eqref{geom inequ} is linear in $(c,d)\in\mathcal{P}\subset\R^{2}$ for $\mathcal{P}\definedas\{c+d\geq0,\; d\geq0\}$, where $\mathcal{P}\subset\R^{2}$ is characterized by linear inequalities, see also \Cref{cd graphic}. Hence \eqref{geom inequ} holds for all $(c,d)\in\mathcal{P}$ if and only if it holds for all $(c,d)\in\partial\mathcal{P}$ or if and only if it holds for $c=-1$, $d=1$ and for $c=1$, $d=0$.

\begin{figure}[h]
\centering
\includegraphics[keepaspectratio, width=12cm]{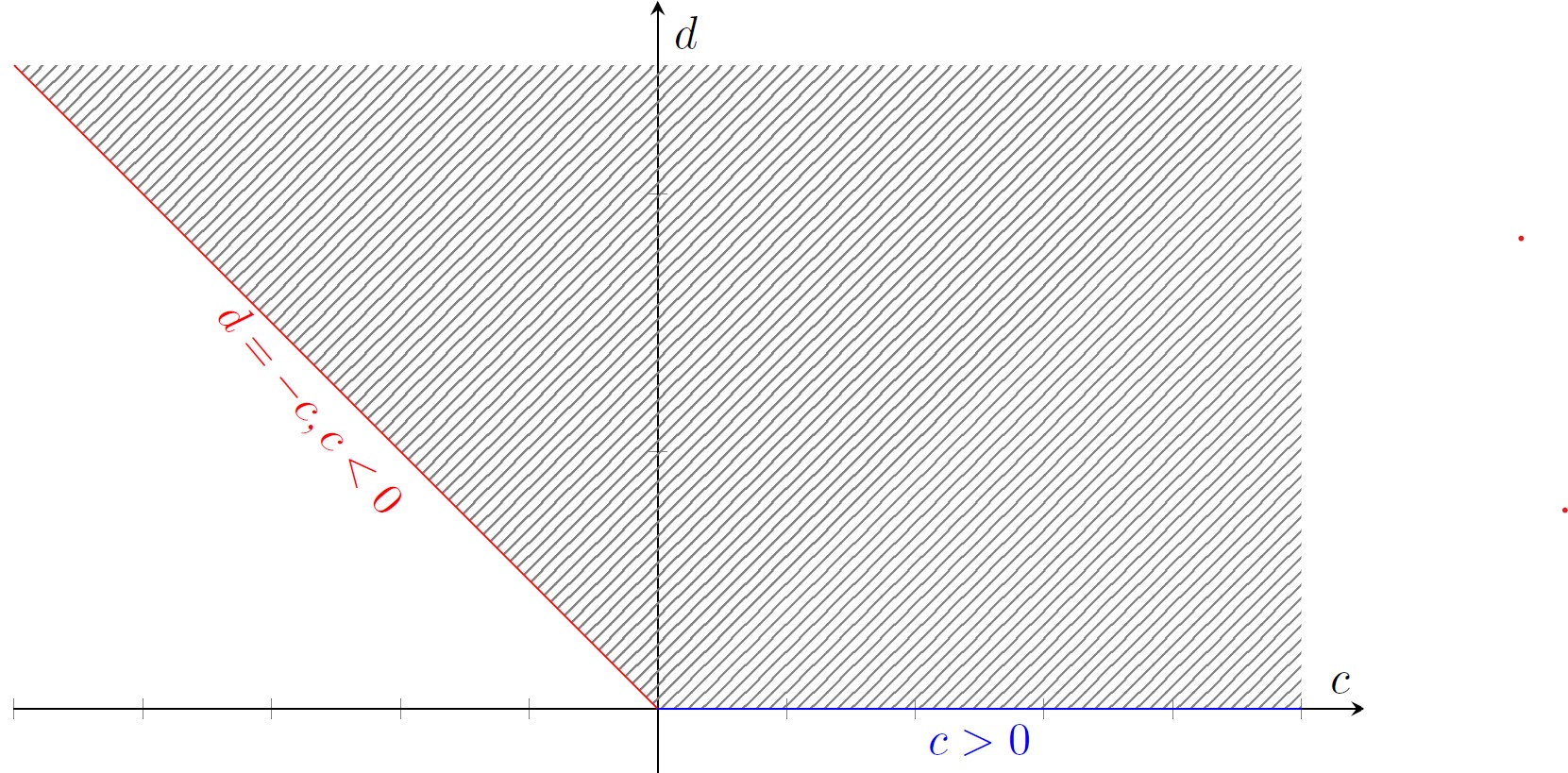}
\caption{The domain $\mathcal{P}\subset\R^2$ of all pairs $(c,d)$ for which \eqref{geom inequ} holds.}\label{cd graphic}
\end{figure}

Thus, \Cref{geom inequ thm} can equivalently be stated as follows.
\begin{theorem}[Geometric inequalities]\label{geom inequ thm 2}
Let $n\geq3$ and consider a parameter $\beta\in\R$ satisfying $\beta\geq \frac{n-2}{n-1}$. Let $\Omega\subset\R^{n}$ be a bounded domain with smooth boundary~$\partial\Omega$. Let $u$ be the electrostatic potential for $\Omega$, i.e., the unique smooth solution to \eqref{id-problem}. One then has     	
\begin{align}\label{geom inequ A}
(n-2)^{\beta+1}\sarea\capa^{\frac{n-2-\beta}{n-2}} &\leq \int_{\partial\Omega} |Du|^{\beta+1}\,d\sigma,\\\label{geom inequ B} 
\left(\frac{n-1}{n-2}\right)\int_{\partial\Omega} |Du|^{\beta+1}\,d\sigma  &\leq \int_{\partial\Omega} |Du|^{\beta}Hd\sigma.
\end{align} 
Moreover, equality holds in each of \eqref{geom inequ A} and \eqref{geom inequ B} if and only if $\Omega$ is a round ball.
\end{theorem}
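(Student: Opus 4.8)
The plan is to derive both inequalities \eqref{geom inequ A} and \eqref{geom inequ B} as consequences of the parametric inequality \eqref{geom inequ} of \Cref{geom inequ thm}, by exploiting its linearity in $(c,d)$ and evaluating at the two extreme rays of the cone $\mathcal{P}=\{c+d\geq 0,\ d\geq 0\}$. Concretely, to obtain \eqref{geom inequ A} I would substitute $c=-1$, $d=1$ into \eqref{geom inequ}; then $c+d=0$, so the term involving $\int_{\partial\Omega}|Du|^\beta H\,d\sigma$ drops out entirely, the bracketed coefficient $d-\frac{(n-1)\beta(c+d)}{n-2}$ collapses to $1$, and what remains is precisely
\begin{align*}
(n-2)^{\beta+1}\sarea\capa^{\frac{n-2-\beta}{n-2}} \leq \int_{\partial\Omega}|Du|^{\beta+1}\,d\sigma,
\end{align*}
which is \eqref{geom inequ A}. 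To obtain \eqref{geom inequ B}, I would instead substitute $c=1$, $d=0$, so that the left-hand side of \eqref{geom inequ} vanishes ($d(n-2)^{\beta+1}=0$) and we are left with
\begin{align*}
0 \leq \beta\int_{\partial\Omega}|Du|^\beta H\,d\sigma - \frac{(n-1)\beta}{n-2}\int_{\partial\Omega}|Du|^{\beta+1}\,d\sigma;
\end{align*}
since $\beta\geq\frac{n-2}{n-1}>0$ we may divide by $\beta$ and rearrange to get exactly \eqref{geom inequ B}. Conversely, adding $d$ times \eqref{geom inequ A} to $(c+d)$ times \eqref{geom inequ B} (with the latter multiplied through by $\beta$, and noting the signs of the multipliers are nonnegative on $\mathcal{P}$) reproduces \eqref{geom inequ}, which is the remark already made in the text before the statement; so the two formulations are genuinely equivalent.

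For the equality characterization, the ``if'' direction is a direct computation: when $\Omega$ is a round ball of radius $\rho$ centered at the origin, $u(x)=(\rho/|x|)^{n-2}$, so $|Du|\equiv (n-2)/\rho$ on $\partial\Omega$, $H\equiv (n-1)/\rho$, $|\partial\Omega|=\sarea\rho^{n-1}$, and $\capa=(n-2)\sarea\rho^{n-2}$ (using the normalization in \Cref{capacity}); plugging these in verifies that \eqref{geom inequ A} and \eqref{geom inequ B} both hold with equality. For the ``only if'' direction, I would invoke the equality case of \Cref{geom inequ thm}: equality in \eqref{geom inequ A} is equality in \eqref{geom inequ} for the admissible choice $c=-1$, $d=1$ (which is not the excluded case $c=d=0$), hence forces $\Omega$ to be a round ball; similarly, equality in \eqref{geom inequ B} combined with the choice $c=1$, $d=0$ (again not $c=d=0$) gives equality in \eqref{geom inequ} and hence forces $\Omega$ to be a round ball.

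Since the proof is purely a specialization of \Cref{geom inequ thm}, there is essentially no obstacle: all the analytic work — deriving the divergence inequality, integrating it, and extracting the rigidity statement — is already contained in the proof of \Cref{geom inequ thm}. The only mild point requiring care is bookkeeping in the equality statement: one must check that the two parameter choices $(c,d)=(-1,1)$ and $(c,d)=(1,0)$ each lie in $\mathcal{P}\setminus\{(0,0)\}$ so that the equality clause of \Cref{geom inequ thm} applies verbatim, and that passing from the combined inequality \eqref{geom inequ} to each individual inequality does not lose the rigidity — which it does not, precisely because at each extreme ray one of the two nonnegative multipliers $d$ and $(c+d)$ vanishes while the other is strictly positive, so equality in \eqref{geom inequ A} (resp. \eqref{geom inequ B}) is \emph{equivalent} to equality in \eqref{geom inequ} at that parameter choice rather than merely implied by it.
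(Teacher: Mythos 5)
Your proposal is correct and takes essentially the same route as the paper: the paper itself presents \Cref{geom inequ thm 2} as an equivalent reformulation of \Cref{geom inequ thm}, justified by the linearity of \eqref{geom inequ} in $(c,d)$ over the cone $\mathcal{P}=\{c+d\geq0,\,d\geq0\}$, and your specializations $(c,d)=(-1,1)$ and $(c,d)=(1,0)$ together with the observation that equality at each extreme ray is equivalent to equality in \eqref{geom inequ} at a nonzero parameter are exactly the paper's argument.
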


\Cref{willmore cor,willmore cor p} then follow from \eqref{geom inequ A} \emph{and} \eqref{geom inequ B}, while \Cref{weighted mink cor,quant will cor} follow from just \eqref{geom inequ A}. We would like to point out that \eqref{geom inequ A} and \eqref{geom inequ B} appear as (4.2) and (4.1) in \cite{Agostiniani.2020}, respectively; i.e., they also follow from Agostiniani and Mazzieri's monotonicity formula approach, see \Cref{section discussion} for more information.

\vfill

\subsection{Strategy of proof of \Cref{geom inequ thm}}
To prove the parametric geometric inequality \eqref{geom inequ} of \Cref{geom inequ thm}, we will first show the following geometric differential inequality which takes a divergence form. 

\begin{theorem}[Divergence inequality] \label{divId Thm}
Let $n\geq3$ and let $\Omega\subset\R^{n}$ be a bounded domain with smooth boundary. Let $u$ be the electrostatic potential for $\Omega$, i.e., the unique smooth solution to \eqref{id-problem}, and let $\crit$ denote the set of critical points of $u$. Let $\beta\geq0$ and set
\begin{equation}\label{abeta}
a_{\beta}\definedas\frac{\beta}{4}\left(\beta-\frac{n-2}{n-1}\right).
\end{equation}
Then the \emph{divergence inequality}
\begin{align}\label{RobDivId5}
\begin{split}
&\diver\!\left(F(u)D|Du|^{\beta}+G(u)|Du|^{\beta} Du\right) \\\
&\quad\geq  a_{\beta}\, F(u) \left|D|Du|^2 - \frac{2(n-1)|Du|^2Du}{(n-2)u}\right|^2 |Du|^{\beta-2}
\end{split}
\end{align}
holds on $\R^n\setminus(\overline{\Omega}\cup \crit)$ for the smooth functions $F,G\colon(0,1]\to\R$ given by
\begin{align}\label{solution F}
F(u)&\definedas (cu+d) u^{1-\frac{n-1}{n-2}\beta},\\ \label{solution G}
 G(u)&\definedas-\frac{(n-1)\beta}{(n-2)u}F(u)+du^{-\frac{n-1}{n-2}\beta}
 \end{align}
 for any fixed constants $c,d\in\R$ satisfying $c+d\geq0$, $d\geq0$. Here, $\diver$ denotes the (Euclidean) divergence. Moreover, if $\beta\geq \frac{n-2}{n-1}$, 
 \begin{align}\label{divPos}
|Du|^2\,\diver\!\left(F(u)D|Du|^{\beta}+G(u)|Du|^{\beta} Du\right)\geq 0
 \end{align}
holds on $\R^n\setminus(\overline{\Omega}\cup \crit)$ with equality if and only if $\Omega$ is a round ball (unless $c=d=0$).
\end{theorem}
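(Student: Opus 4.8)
\bigskip
\noindent\textbf{Proof strategy for \Cref{divId Thm}.}
The plan is to prove \eqref{RobDivId5} by expanding the divergence on its left‑hand side explicitly and estimating it from below using only two facts about the harmonic function $u$ — the Bochner identity and a sharp ``refined Kato'' lower bound for $|D^{2}u|^{2}$ — and then to read off the nonnegativity \eqref{divPos} together with its rigidity statement. Throughout we work on the open set $\R^{n}\setminus(\overline{\Omega}\cup\crit)$, on which $w\definedas|Du|^{2}$ is smooth and strictly positive, so that $|Du|^{\beta}=w^{\beta/2}$ is smooth; we abbreviate $S\definedas\bigl|D|Du|^{2}\bigr|^{2}$ and $T\definedas Du\cdot D|Du|^{2}$, and write primes for $u$‑derivatives. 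Note first that $F(u)=(cu+d)u^{1-\frac{n-1}{n-2}\beta}\ge 0$ on $(0,1]$, because $cu+d$ is the affine function interpolating the nonnegative values $d$ at $u=0$ and $c+d$ at $u=1$.

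First I would carry out the product‑rule expansion of the divergence, using $\Delta u=0$ and the Bochner identity $\Delta|Du|^{2}=2|D^{2}u|^{2}$ (valid since $u$ is harmonic). Collecting the resulting terms and factoring out $w^{\frac{\beta}{2}-2}$, one finds that $\diver\bigl(F(u)D|Du|^{\beta}+G(u)|Du|^{\beta}Du\bigr)$ equals $w^{\frac{\beta}{2}-2}$ times
\begin{equation*}
\tfrac{\beta}{2}\bigl(F'+G\bigr)\,wT\;+\;\tfrac{\beta(\beta-2)}{4}\,F\,S\;+\;\beta F\,w\,|D^{2}u|^{2}\;+\;G'\,w^{3}.
\end{equation*}
Next I would substitute the refined Kato inequality for harmonic functions,
\begin{equation*}
w\,|D^{2}u|^{2}\;\ge\;\frac{S}{2}-\frac{n-2}{4(n-1)}\cdot\frac{T^{2}}{w},
\end{equation*}
obtained by writing $D^{2}u$ in an orthonormal frame whose first vector is $Du/|Du|$ and using that $D^{2}u(Du)=\tfrac12 D|Du|^{2}$ and $\operatorname{tr}D^{2}u=0$; it is sharp, with equality exactly when the level set of $u$ through the point is totally umbilic. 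Since $\beta F(u)\ge 0$, this substitution is legitimate and turns the coefficient of $S$ in the bracket above into $\tfrac{\beta^{2}}{4}F$.

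The core of the argument is then a purely algebraic comparison of the resulting lower bound with the right‑hand side of \eqref{RobDivId5}. Expanding $\bigl|D|Du|^{2}-\tfrac{2(n-1)}{(n-2)u}|Du|^{2}Du\bigr|^{2}=S-\tfrac{4(n-1)}{(n-2)u}\,wT+\tfrac{4(n-1)^{2}}{(n-2)^{2}u^{2}}\,w^{3}$ and using the two identities
\begin{equation*}
F'+G=\tfrac{2}{u}\Bigl(1-\tfrac{n-1}{n-2}\beta\Bigr)F,\qquad u^{2}G'=\tfrac{n-1}{n-2}\beta\Bigl(\tfrac{n-1}{n-2}\beta-1\Bigr)F,
\end{equation*}
which follow directly from \eqref{solution F}--\eqref{solution G} and whose validity is precisely what the particular form of $F$ and $G$ is designed for, one checks — with $a_{\beta}$ as in \eqref{abeta} — that in the (post–Kato) bracket the coefficients of $wT$ and of $w^{3}$ match exactly those of $a_{\beta}F$ times the displayed squared expression, while the coefficients of $S$ and of $T^{2}/w$ differ by exactly $\pm\tfrac{\beta(n-2)}{4(n-1)}F$. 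Hence that bracket equals $a_{\beta}F$ times the squared expression plus $\tfrac{\beta(n-2)}{4(n-1)}F\,(S-T^{2}/w)$, and the latter is nonnegative because $T^{2}=\bigl(Du\cdot D|Du|^{2}\bigr)^{2}\le|Du|^{2}\bigl|D|Du|^{2}\bigr|^{2}=wS$ by Cauchy--Schwarz. Multiplying through by $w^{\frac{\beta}{2}-2}\ge 0$ gives \eqref{RobDivId5}, for every $\beta\ge 0$; and when $\beta\ge\tfrac{n-2}{n-1}$ one has $a_{\beta}\ge 0$, so multiplying \eqref{RobDivId5} by $w\ge 0$ and using $F\ge 0$ yields \eqref{divPos}.

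For the rigidity statement, if $c=d=0$ then $F\equiv G\equiv 0$, the vector field vanishes identically, and \eqref{divPos} is a trivial equality; otherwise $cu+d>0$ for $u\in(0,1)$, so $F(u)>0$ on $\R^{n}\setminus\overline{\Omega}$ since $0<u<1$ there by the strong maximum principle. Tracing back, equality in \eqref{divPos} then forces both equality in the refined Kato inequality (so the level sets of $u$ are totally umbilic) and $S=T^{2}/w$ (so $D|Du|^{2}$ is parallel to $Du$, i.e.\ $|Du|$ is constant on each level set of $u$); when $\beta>\tfrac{n-2}{n-1}$ it moreover forces $D|Du|^{2}=\tfrac{2(n-1)}{(n-2)u}|Du|^{2}Du$, i.e.\ local constancy of $|Du|^{2}u^{-\frac{2(n-1)}{n-2}}$. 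A harmonic function whose level sets are totally umbilic with $|Du|$ constant on each is rotationally symmetric about a point (the classical rigidity underlying such capacitary characterisations, cf.\ \cite{Agostiniani.2020}), and together with $u=1$ on $\partial\Omega$ and $u(x)\to 0$ as $|x|\to\infty$ this forces $\Omega$ to be a round ball; conversely, for a round ball all of the inequalities used above hold with equality everywhere, so \eqref{divPos} is an equality. I expect the main obstacle, apart from the lengthy but routine expansion in the first step, to be casting the refined Kato inequality in exactly the sharp form needed so that, after the cancellations forced by \eqref{solution F}--\eqref{solution G}, only a single Cauchy--Schwarz defect remains; the rigidity step — which hinges on the equality case of that same refined Kato inequality — is the other delicate point.
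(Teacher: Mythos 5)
Your proof proposal is essentially the same strategy as the paper's: expand $\diver Z$ using the product rule, $\Delta u=0$, and the Bochner identity $\Delta|Du|^2=2|D^2u|^2$; use the ODEs satisfied by $F$ and $G$ (your two displayed identities are exactly those of the paper's \Cref{ODEs}, after substituting $a_\beta$); substitute a refined Kato inequality to kill the $|D^2u|^2$ term; and verify that the remainder equals $a_\beta F$ times the squared expression plus a manifestly nonnegative defect. I checked the coefficient bookkeeping and it is correct (with the understanding that the common factor is $|Du|^{\beta-4}$, consistent with the paper's identity \eqref{eq:divZ}). The only genuine difference is that you use the sharper, frame--decomposed version of the Kato bound,
\[
|Du|^2|D^2u|^2\ \geq\ \tfrac12\,\bigl|D|Du|^2\bigr|^2-\tfrac{n-2}{4(n-1)}\cdot\frac{\bigl(Du\cdot D|Du|^2\bigr)^2}{|Du|^2},
\]
and then separately apply Cauchy--Schwarz $T^2\leq wS$ to discard the remaining defect, whereas the paper applies the packaged statement $|D^2u|^2\geq\tfrac{n}{n-1}|D|Du||^2$ (its \Cref{kato prop}) directly. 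These are equivalent, since your inequality plus Cauchy--Schwarz is exactly the paper's one; the paper's route is marginally cleaner because it exhibits an exact identity \eqref{eq:divZ} rather than a two--step estimate. Both decompositions yield the same equality conditions.

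The one place you should tighten is the rigidity step. You correctly deduce that equality in \eqref{divPos} forces totally umbilic level sets and $|Du|$ locally constant on them, but you then invoke an unproved classification (``a harmonic function whose level sets are totally umbilic with $|Du|$ constant on each is rotationally symmetric about a point'') and only cite \cite{Agostiniani.2020} for it. This is a heavier statement than you need, and as written it is a gap. The paper's much shorter route is available to you: from totally umbilic regular level sets and the fact that $\partial\Omega=\{u=1\}$ is itself a regular level set admitting a one--sided tubular neighborhood in the critical--point--free region, continuity gives that $\partial\Omega$ is totally umbilic; being also connected and closed in $\R^n$, $\partial\Omega$ must be a round sphere, so $\Omega$ is a round ball. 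No structural result about $u$ away from $\partial\Omega$ is required. I would replace your appeal to the rotational--symmetry classification by this elementary argument.
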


\Cref{geom inequ thm} then follows from \Cref{divId Thm} via the divergence theorem, appealing to a rigorous analysis of the behavior of $u$ near $\crit$, see \Cref{section geometric results}. 

\Cref{RobDivId5} vaguely resembles a geometric differential identity for the ``static potential'' of an asymptotically flat static vacuum system in general relativity first derived for $n=3$ and $\beta=2$ by Robinson~\cite{Robinson.1977} and then generalized to $n\geq3$, $\beta\geq2$ by Cederbaum, Cogo, Leandro, and Paulo dos Santos~\cite[Theorem 1.3]{Cederbaum.b}.  As in \cite{Cederbaum.b}, \Cref{divId Thm} also allows to derive monotonicity formulas for certain functionals arising as surface integrals on level sets of the electrostatic potential $u$, namely for those corresponding to the divergence on the left hand side of \eqref{RobDivId5} via the divergence theorem. These turn out to be closely related to the monotone functionals that Agostiniani and Mazzieri~\cite{Agostiniani.2020} use in their monotonicity formula approach to proving \eqref{willmore}--\eqref{geom inequ B} . This will be discussed further in \Cref{section discussion}.

\subsection{Relation to previous results}\label{section relation}
The $2$-dimensional Willmore inequality 
\begin{equation*}
16\pi\leq \int_{\partial\Omega} H^2\, d\sigma
\end{equation*}
was first shown by Willmore in 1968 in $3$-dimensional Euclidean space \cite[Theorem 1]{Willmore.1968}. The Willmore inequality was generalized to higher dimensions $n\geq3$ by Chen, see \cite{Chen.1971,Chen.1971b} and the references given therein. Another proof was presented by Agostiniani and Mazzieri in \cite{Agostiniani.2020} using a monotonicity formula approach in the potential theory setup described above. Versions of the Willmore inequality are also known in other ambient spaces, see e.g.~\cite{Agostiniani.2017,Hu.2018,Viana.22.07.2019,Agostiniani.2020b,Schulze.2020} and the references given therein. Our approach gives a new proof of the Willmore inequality in Euclidean space in $n\geq3$ dimensions, based on potential theory and a divergence inequality. It is conceivable that our approach may also be used to prove versions of the Willmore inequality and related inequalities in other ambient spaces and other geometric inequalities that are amenable to linear or even non-linear potential theory; this is work in progress by the first named author and collaborators. It may also be relevant for studying the stability of such inequalities, see e.g.~\cite{Chodosh.06.06.2023}.

As \eqref{eq:Minkowskip}--\eqref{geom inequ B}  are not of central interest here, we refer the interested reader to \cite[Section 1]{Agostiniani.2020} for a discussion of further results, references, and techniques of proof related to these inequalities.

\paragraph*{This paper is structured as follows:} In \Cref{section preliminaries}, we fix our notation and recall some well-known statements on harmonic functions and related concepts. In \Cref{section divergence inequality}, we prove the divergence inequality \eqref{RobDivId5} stated in \Cref{divId Thm}. In \Cref{section geometric results}, we prove \Cref{geom inequ thm}  (or equivalently \Cref{geom inequ thm 2}). Last but not least, in \Cref{section discussion}, we compare our approach based on the geometric differential inequality \eqref{divId Thm} to the monotonicity formula approach taken in \cite{Agostiniani.2020} and in particular derive the above-mentioned monotone functionals and compare them to those obtained in~\cite[Theorem 1.1]{Agostiniani.2020}.

\paragraph{Acknowledgements.} The work of the first named author is supported by the focus program on Geometry at Infinity (Deutsche Forschungsgemeinschaft,  SPP 2026). We would like to thank Virginia Agostiniani for interesting scientific discussions. This manuscript has no associated data.

\section{Preliminaries}\label{section preliminaries}
As implicitly stated in \Cref{section intro}, we will treat $\R^{n}$ as a smooth manifold carrying the Euclidean metric $\delta$. The sign of the Laplacian $\Delta u$ of a smooth function $u$ defined on the \emph{exterior region} $\R^{n}\setminus\Omega$ of a bounded domain $\Omega$ with smooth boundary $\partial\Omega$ is such that $\Delta=\sum_{i=1}^{n}\frac{\partial^{2}}{(\partial{x^{i}})^{2}}$ in Cartesian coordinates $(x^{i})$. The mean curvature $H$ of $\partial\Omega$ is computed with respect to the unit normal $\nu$ pointing towards infinity, with our sign and scaling convention being such that the unit round sphere $\sphere$ has mean curvature $H=n-1$ with respect to the unit normal $\nu$ pointing towards infinity.

Now let $\Omega\subset\R^{n}$ be a bounded domain with smooth boundary $\partial\Omega$. By standard elliptic theory\footnote{One has to be slightly careful as the exterior region $\R^{n}\setminus\Omega$ is unbounded. However, this is easy to work around by approximating $\R^{n}\setminus\Omega$ by an exhaustion $\left\{\left(\R^{n}\setminus\Omega\right)\cap B_{R}\right\}_{R>R_{0}}$ giving smooth solutions $u_{R}$ of the restricted problem which converge to a solution $u$ of \eqref{id-problem} in $C^{k}_{\text{loc}}$ for all $k\in\N$. Here, $B_{R}\subset\R^{n}$ denotes the open ball of radius $R>0$ centered at the origin. For the maximum principle, one argues by contradiction and can then restrict to a compact domain $U$ with smooth boundary $\partial U$ around potential interior maxima/minima. For the Hopf lemma, one restricts to a suitable compact domain $V$ with smooth boundary enclosing $\partial\Omega$, say with $u=\kappa$ on $\partial V\setminus\partial\Omega$ for some suitable $0<\kappa<1$, applying the maximum principle.\label{foot:noncompact}} (see e.g. \cite[Chapters 2, 3]{Gilbarg.2001}), there exists a unique smooth solution $u\colon\R^{n}\setminus\Omega\to\R$ to \eqref{id-problem}, called the \emph{(electrostatic) potential (of $\Omega$)}. The maximum principle and the Hopf lemma\textsuperscript{\ref{foot:noncompact}} inform us that $0<u<1$ in $\R^{n}\setminus\overline{\Omega}$ and that the normal derivative $\nu(u)$ of $u$ satisfies $\nu(u)<0$ on $\partial\Omega$. In particular, $\partial\Omega$ is a regular level set of the potential $u$. Moreover, for any regular level set $\{u=u_{0}\}$ of $u$, including $\partial\Omega=\{u=1\}$, one finds 
\begin{equation}\label{eq:nu}
\nu=-\frac{Du}{\vert Du\vert}
\end{equation}
for the unit normal $\nu$ pointing towards infinity. Furthermore, $u$ is real analytic on $\R^{n}\setminus\Omega$ (see e.g.~\cite[Theorem 10]{Evans.2022}). This also implies that $\vert Du\vert^{2}$ is real analytic on $\R^{n}\setminus\Omega$. For later convenience, let us state here that the Bochner identity for the harmonic function $u$ reduces to 
\begin{equation}\label{eq:Bochner}
\Delta |D u|^2=2|D^2u|^2.
\end{equation}

For convenience, let us state that for the round open ball $\Omega=B_{R}(z)\subset\R^{n}$ of radius $R>0$ and center $z\in\R^{n}$, one finds the electrostatic potential
\begin{equation}\label{radial u}
u_{R,z}(x)=\left(\frac{R}{\vert x-z\vert}\right)^{n-2}
\end{equation}
for all $x\in\R^{n}\setminus B_{R}(z)$. It can easily be checked that $u_{R,z}$, $\partial\Omega=\sphere_{R}(z)$ saturate \eqref{geom inequ} and all other inequalities in \Cref{section intro}.
 
An important concept related to problem \eqref{id-problem} is the (electrostatic) capacity which is defined in the following way.
\begin{definition}[Electrostatic capacity, see e.g. {\cite[Chapter 2.9]{Gilbarg.2001}}]\label{capacity}
Let $n\geq3$ and let $\Omega\subset \R^n$ be a bounded domain with smooth boundary. The quantity
\begin{equation}
\capa\definedas\inf \left\{\frac{1}{(n-2)\left| \mathbb{S}^{n-1}\right|}\int_{\R^n}|D\omega|^2 \,d\mu\:\left\vert\right.\:\omega\in C^{\infty}_c(\R^n),\, \omega=1 \textit{ in }\Omega\right\}
\end{equation}
is called the \emph{(electrostatic) capacity of $\Omega$}. Here, $d\mu$ is the Lebesgue measure on $\R^{n}$.
\end{definition}

For the proof of the parametric geometric inequality \eqref{geom inequ} in \Cref{geom inequ thm}, we will need to exploit  the asymptotic behaviour of $u$ and its derivatives at infinity. This is well-understood; we collect all useful facts in the following theorem. 

\begin{theorem}[Asymptotics of $u$, see e.g {\cite{Kellogg.1967}}]\label{asymptotics}
Let $n\geq3$ and let $\Omega\subset \R^n$ be a bounded domain with smooth boundary and electrostatic potential $u$. Then
\begin{align*}
u&=\frac{\capa}{|x|^{n-2}}+o\left(|x|^{-(n-2)}\right),\\
D_{i}u&=-\frac{(n-2)\capa}{|x|^{n}}\,x_{i}+o\left(|x|^{-(n-1)}\right),\\
D_{i}D_{j}u&=\frac{(n-2)\capa}{|x|^{n+2}}\left(nx_{i}x_{j}-|x|^2\delta_{i j}\right)+o\left(|x|^{-n}\right)
\end{align*} 
for $i,j=1,\dots,n$ as $\vert x\vert\to\infty$.
\end{theorem}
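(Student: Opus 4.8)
The plan is to bootstrap from the known leading-order behaviour of the Green's function / Newtonian potential together with elliptic estimates. First I would use the Kelvin transform to reduce the exterior problem to an interior one. Define $v(y)\definedas |y|^{-(n-2)}\,u\!\left(y/|y|^2\right)$ for $y$ in a punctured neighbourhood of the origin; since $u$ is harmonic on $\R^n\setminus\overline\Omega$ and decays at infinity, $v$ is harmonic on a punctured ball $B_{\rho}\setminus\{0\}$ and bounded there, hence extends to a harmonic function on all of $B_\rho$ by the removable-singularity theorem for harmonic functions. In particular $v$ is real analytic at $0$ and has a convergent Taylor expansion $v(y)=v(0)+D_i v(0)\,y_i+O(|y|^2)$.

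Next I would identify the constant $v(0)$ with the capacity. Transforming back, $u(x)=|x|^{-(n-2)}\,v(x/|x|^2)$, so $u(x)=v(0)\,|x|^{-(n-2)}+o(|x|^{-(n-2)})$ as $|x|\to\infty$. To see $v(0)=\capa$, integrate $\Delta u=0$ over the region between $\partial\Omega$ and a large sphere $\mathbb{S}^{n-1}_R$, using the divergence theorem and \eqref{eq:nu}: this gives $\int_{\mathbb{S}^{n-1}_R}\partial_\nu u\,d\sigma = \int_{\partial\Omega}\partial_\nu u\,d\sigma$ for all large $R$, a flux that is independent of $R$; combining the leading term of $u$ with the Kelvin expansion and comparing with the radial solution \eqref{radial u} (or directly with \Cref{capacity in terms of potential}) pins down the coefficient as $\capa$. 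Alternatively, one can simply cite \cite{Kellogg.1967}, as the statement does.

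The derivative asymptotics then follow by differentiating the Kelvin-transform formula and using the Taylor expansion of the smooth function $v$ at $0$. Writing $u(x)=|x|^{-(n-2)}v(x/|x|^2)$ and applying the chain rule, the term coming from $v(0)|x|^{-(n-2)}$ produces exactly $D_iu = -(n-2)\,v(0)\,x_i|x|^{-n}+\dots$ and $D_iD_ju = (n-2)v(0)|x|^{-(n+2)}(nx_ix_j-|x|^2\delta_{ij})+\dots$, while the contributions from $D_iv(0)\,y_i$ and higher-order Taylor terms, after transforming back, are of order $|x|^{-n}$ and $|x|^{-(n+1)}$ respectively — hence absorbed into the stated $o(|x|^{-(n-1)})$ and $o(|x|^{-n})$ error terms. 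To make the error terms rigorous rather than merely formal, I would invoke interior Schauder/derivative estimates for harmonic functions: on an annulus $\{R\le |x|\le 2R\}$ one has $\sup|D^k u|\lesssim R^{-k}\sup|u|$, so control on the remainder $u(x)-\capa|x|^{-(n-2)} = o(|x|^{-(n-2)})$ upgrades to $D^k(u-\capa|x|^{-(n-2)}) = o(|x|^{-(n-2)-k})$, which is exactly what is claimed.

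The main obstacle is not conceptual but bookkeeping: one must verify carefully that the Kelvin transform of a function harmonic and decaying on the \emph{exterior} domain is harmonic and \emph{bounded} near $0$ (so the removable-singularity theorem applies, ruling out a $|y|^{-(n-2)}$ singularity for $v$, i.e. a growing term for $u$), and then track the chain-rule computation for $D^2 u$ without error, since several terms of the same apparent order must be grouped correctly into the principal part versus the remainder. Since the paper only needs the statement and explicitly offers it as standard (citing \cite{Kellogg.1967}), I would keep the write-up brief, presenting the Kelvin-transform reduction and the derivative estimate as the two key steps and relegating the explicit chain-rule expansion to a remark.
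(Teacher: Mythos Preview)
The paper does not actually prove this theorem: it is stated as a classical fact with a reference to Kellogg, so there is no ``paper's own proof'' to compare against. Your Kelvin-transform argument is a standard and essentially correct route to the result; in that sense you are supplying more than the paper does.

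Two small points are worth tightening. First, your claim that $v$ is \emph{bounded} near $0$ is not quite what you get for free: from $0<u<1$ you only have $|v(y)|\le |y|^{-(n-2)}$. What you actually use is that $u(x)\to 0$ at infinity, which gives $v(y)=o(|y|^{-(n-2)})$, and it is this $o$-condition (not boundedness) that feeds into the removable-singularity theorem (B\^ocher-type) to force the singular coefficient to vanish. You flag this in your ``main obstacle'' paragraph, so just make sure the write-up states the correct hypothesis. Second, be careful about circularity when identifying $v(0)=\capa$: in the paper, \Cref{capacity in terms of potential} is derived \emph{from} \Cref{asymptotics}, so invoking it here would be circular. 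The clean fix is to use the variational characterisation directly: the infimum in \Cref{capacity} is attained by $u$, so $(n-2)\sarea\,\capa=\int_{\R^n\setminus\Omega}|Du|^2\,d\mu=\int_{\partial\Omega}|Du|\,d\sigma$ via integration by parts on $\{u_0<u<1\}$ and letting $u_0\to 0$ (the boundary term at the inner level set is controlled by your already-established leading term $v(0)|x|^{-(n-2)}$, closing the loop and giving $v(0)=\capa$).
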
 

From \Cref{asymptotics} and \eqref{radial u}, one directly sees that $\operatorname{Cap}(B_{R}(z))=R^{n-2}$. We can hence reformulate \Cref{asymptotics} as stating that
\begin{align}
\begin{split}\label{eq:asymptoticsR}
u&=u_{R_{\Omega},0}+o_{2}\left(|x|^{-(n-2)}\right),\\
R_{\Omega}&\definedas\capa^{\frac{1}{n-2}}
\end{split}
\end{align}
as $\vert x\vert\to\infty$. Moreover, using the divergence theorem, \eqref{id-problem}, \eqref{eq:nu}, and \Cref{asymptotics}, the following well-known fact is immediate.
\begin{proposition}[Capacity in terms of potential]\label{capacity in terms of potential}
Let $n\geq3$ and let $\Omega\subset \R^n$ be a bounded domain with smooth boundary and electrostatic potential $u$. Then
\begin{equation}
(n-2)\sarea\capa =\int_{\partial\Omega} |Du|\,d\sigma.
\end{equation}
In particular, $\capa>0$.
\end{proposition}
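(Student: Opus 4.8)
The plan is to apply the divergence theorem to the harmonic function $u$ on the bounded annular region $U_R\definedas B_R\setminus\overline{\Omega}$, for $R$ large enough that $\overline{\Omega}\subset B_R$, and then to send $R\to\infty$, using \Cref{asymptotics} to evaluate the flux of $Du$ through the large sphere $\partial B_R$.

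Concretely, since $\Delta u=0$ on $U_R$ and $\partial U_R=\partial\Omega\sqcup\partial B_R$, the divergence theorem gives
\[
0=\int_{U_R}\diver(Du)\,d\mu=\int_{\partial\Omega}(-\nu)(u)\,d\sigma+\int_{\partial B_R}\partial_r u\,d\sigma ,
\]
where $-\nu$ is the outward unit normal of $U_R$ along $\partial\Omega$ (it points into $\Omega$, i.e.\ opposite to the normal $\nu$ pointing towards infinity) and $\partial_r$ denotes differentiation along the outward radial normal $x/|x|$ of $\partial B_R$. On $\partial\Omega$, \eqref{eq:nu} yields $\nu=-Du/|Du|$, hence $(-\nu)(u)=|Du|$, so $\int_{\partial\Omega}|Du|\,d\sigma=-\int_{\partial B_R}\partial_r u\,d\sigma$. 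For the sphere term, \Cref{asymptotics} gives $\partial_r u=\frac{x}{|x|}\cdot Du=-\frac{(n-2)\capa}{|x|^{n-1}}+o\!\left(|x|^{-(n-1)}\right)$ as $|x|\to\infty$; evaluating on $\partial B_R$ (where $|x|=R$) and integrating over $\partial B_R$, whose total area is $\sarea\,R^{n-1}$, I would obtain $\int_{\partial B_R}\partial_r u\,d\sigma=-(n-2)\sarea\capa+o(1)$. Combining the two displays gives $\int_{\partial\Omega}|Du|\,d\sigma=(n-2)\sarea\capa+o(1)$ as $R\to\infty$, and since the left-hand side does not depend on $R$, letting $R\to\infty$ produces the asserted identity.

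For the concluding claim, I would note that the Hopf lemma recalled above gives $\nu(u)<0$ on $\partial\Omega$, so $|Du|=-\nu(u)>0$ on $\partial\Omega$ by \eqref{eq:nu}, whence $\int_{\partial\Omega}|Du|\,d\sigma>0$ and therefore $\capa>0$. The only step that requires any care is the limit along $\partial B_R$: one needs not merely that $Du$ decays, but that it decays like $|x|^{1-n}$ with precise leading term $-(n-2)\capa\,x/|x|^{n}$, which is exactly what \Cref{asymptotics} supplies; the rest is just bookkeeping of orientations and is immediate.
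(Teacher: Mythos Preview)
Your argument is correct and matches exactly the approach the paper indicates: applying the divergence theorem to $Du$ on $B_R\setminus\overline{\Omega}$, using \eqref{id-problem}, \eqref{eq:nu}, and the asymptotics from \Cref{asymptotics} to handle the outer boundary, then letting $R\to\infty$; the paper merely states this is immediate without writing out the steps. Your treatment of the orientations and the use of the Hopf lemma for $\capa>0$ are also in line with what the paper records just before the proposition.
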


\begin{remarkanddef}[Neighborhood of infinity]\label{rem:infty}
As $\capa\neq0$, $u$ regularly foliates some \emph{neighborhood $\mathcal{N}\subset\R^{n}$ of infinity}, i.e., there exists an open set $\mathcal{N}\supset B_{R}$ for some $R>0$ such that $u$ has no critical points in $\mathcal{N}$. 
\end{remarkanddef}

We will also need the following consequences of \Cref{asymptotics} which can be derived from \Cref{asymptotics} and \eqref{eq:asymptoticsR} by standard computations and arguments (see \cite[Section 3]{Anabel} or see e.g.~\cite[Lemma 2.2]{Cederbaum.b} for very similar detailed computations and arguments) and which relies on \Cref{rem:infty}. 
\begin{proposition}[Asymptotics]\label{limits of FGH}
Let $n\geq3$ and let $\Omega\subset \R^n$ be a bounded domain with smooth boundary and electrostatic potential $u$. Let $F$ and $G$ be as in \eqref{solution F}, \eqref{solution G}, respectively, for some $c,d\in\R$, and let $\beta\geq0$. Then
\begin{align*}
|Du|^{\beta}&=\vert Du_{R_{\Omega},0}\vert^{\beta}+o\left(|x|^{-(n-1)\beta}\right)=\frac{(n-2)^{\beta}\capa^{\beta}}{|x|^{\beta(n-1)}}+o\left(|x|^{-(n-1)\beta}\right),\\
F(u)&=F(u_{R_{\Omega},0})+o\left(\frac{1}{|x|^{n-2-(n-1)\beta}} \right)=\frac{d\capa^{1-\frac{(n-1)\beta}{n-2}}}{|x|^{n-2-(n-1)\beta}}+o\left(\frac{1}{|x|^{n-2-(n-1)\beta}} \right),\\
G(u)&=G(u_{R_{\Omega},0})(1+o(1))=d\capa^{-\frac{(n-1)\beta}{n-2}}\left[1-\frac{(n-1)\beta}{n-2} \right]|x|^{(n-1)\beta}\,(1+o(1))
\end{align*}
as $\vert x\vert\to\infty$. For the regular level sets of $u$ contained in a suitable neighborhood $\mathcal{N}$ of infinity, the area measure $d\sigma$ and mean curvature $H$ with respect to the unit normal $\nu$ from \eqref{eq:nu} asymptotically behave as
\begin{align}\label{asydsigma}
d\sigma&=(1+o(1))\,d\sigma_{\mathbb{S}^{n-1}_{R}},\\\label{asyH}
H&=\frac{n-1}{|x|}(1+o(1))
\end{align}
as $\vert x\vert\to\infty$.
\end{proposition}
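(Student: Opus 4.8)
The statement is a collection of asymptotic expansions that all follow by direct substitution from \Cref{asymptotics} and \eqref{eq:asymptoticsR}, combined with elementary calculus. My plan is to handle the expansions in the order they are listed, starting with $\vert Du\vert^{\beta}$, then $F(u)$ and $G(u)$, and finally the geometric quantities $d\sigma$ and $H$. The only mildly technical points are keeping track of orders in the error terms and justifying that differentiating the asymptotic expansion of $u$ (rather than appealing directly to the expansion of $D_iu$) is legitimate; this is exactly the content of \eqref{eq:asymptoticsR}, which asserts that $u = u_{R_\Omega,0} + o_2(|x|^{-(n-2)})$ in a neighborhood $\mathcal N$ of infinity, where $o_2$ means that the first and second derivatives of the remainder also decay with the correspondingly improved rates.

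\textbf{Step 1: Expansion of $\vert Du\vert^{\beta}$.} From \Cref{asymptotics} we have $D_iu = -\tfrac{(n-2)\capa}{|x|^n}x_i + o(|x|^{-(n-1)})$, hence $|Du|^2 = \tfrac{(n-2)^2\capa^2}{|x|^{2(n-1)}} + o(|x|^{-2(n-1)})$, since $\sum_i x_i^2 = |x|^2$ and the cross terms between the leading order and the remainder are of order $o(|x|^{-2(n-1)})$. Taking the $\beta/2$ power and using that $(A + o(A))^{\beta/2} = A^{\beta/2}(1+o(1))^{\beta/2} = A^{\beta/2} + o(A^{\beta/2})$ for $A = (n-2)^2\capa^2 |x|^{-2(n-1)} > 0$ (valid in $\mathcal N$ where $Du\neq 0$), we obtain
\begin{equation*}
|Du|^{\beta} = \frac{(n-2)^{\beta}\capa^{\beta}}{|x|^{\beta(n-1)}} + o\!\left(|x|^{-(n-1)\beta}\right),
\end{equation*}
and the first equality in the claimed line follows by recognizing $(n-2)^{\beta}\capa^{\beta}|x|^{-\beta(n-1)} = |Du_{R_\Omega,0}|^{\beta}$ via $\capa = R_\Omega^{n-2}$ and the explicit form \eqref{radial u} of $u_{R_\Omega,0}$.

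\textbf{Step 2: Expansions of $F(u)$ and $G(u)$.} Write $F(u) = (cu+d)u^{1-\frac{n-1}{n-2}\beta}$. Substituting $u = \capa|x|^{-(n-2)} + o(|x|^{-(n-2)})$ and using that $u \to 0$, one gets $cu+d = d + o(1)$ and $u^{1-\frac{n-1}{n-2}\beta} = \capa^{1-\frac{(n-1)\beta}{n-2}}|x|^{-(n-2)+(n-1)\beta}(1+o(1))$ by the same power-expansion argument as in Step 1 (here one needs $u>0$, which holds by the maximum principle). Multiplying gives $F(u) = d\capa^{1-\frac{(n-1)\beta}{n-2}}|x|^{-(n-2-(n-1)\beta)} + o(|x|^{-(n-2-(n-1)\beta)})$, which is the stated line; again identify the leading term with $F(u_{R_\Omega,0})$. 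For $G$, one uses the definition $G(u) = -\tfrac{(n-1)\beta}{(n-2)u}F(u) + du^{-\frac{n-1}{n-2}\beta}$; the first term equals $-\tfrac{(n-1)\beta}{n-2}(cu+d)u^{-\frac{n-1}{n-2}\beta}$ and the second adds $du^{-\frac{n-1}{n-2}\beta}$, so $G(u) = \left[d - \tfrac{(n-1)\beta}{n-2}(cu+d)\right]u^{-\frac{n-1}{n-2}\beta}$; since $cu+d = d + o(1)$ the bracket is $d\left[1-\tfrac{(n-1)\beta}{n-2}\right] + o(1)$ and $u^{-\frac{n-1}{n-2}\beta} = \capa^{-\frac{(n-1)\beta}{n-2}}|x|^{(n-1)\beta}(1+o(1))$, yielding the claimed multiplicative $(1+o(1))$ form. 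One should note that when the bracket $d[1-\frac{(n-1)\beta}{n-2}]$ vanishes the statement is read as $G(u) = o(|x|^{(n-1)\beta})$, consistent with the displayed formula.

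\textbf{Step 3: Expansions of $d\sigma$ and $H$ on level sets near infinity.} By \Cref{rem:infty}, $u$ regularly foliates $\mathcal N$, and by \eqref{eq:asymptoticsR} the level set $\Sigma_t = \{u = t\}$ for small $t>0$ is a $C^2$-small graph over the coordinate sphere $\mathbb S^{n-1}_{R}$ with $R = (\capa/t)^{1/(n-2)}$; indeed $u = u_{R_\Omega,0} + o_2(|x|^{-(n-2)})$ means that, after rescaling by $R$, the level set converges in $C^2$ to the unit sphere. The induced area measure and the mean curvature are computed from the first and second fundamental forms, which depend on at most the second derivatives of the defining function $u$; by the $C^2$-closeness these converge to the corresponding quantities of $\mathbb S^{n-1}_R$, namely $d\sigma_{\mathbb S^{n-1}_R}$ and $H_{\mathbb S^{n-1}_R} = \tfrac{n-1}{R} = \tfrac{n-1}{|x|}(1+o(1))$ (using that on $\Sigma_t$ one has $|x| = R(1+o(1))$). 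This gives \eqref{asydsigma} and \eqref{asyH}. This is the step that requires the most care, because it converts pointwise $C^2$ decay of $u - u_{R_\Omega,0}$ into convergence of genuinely geometric (metric and curvature) quantities on a moving family of hypersurfaces; however, since all of these are smooth algebraic functions of $Du$ and $D^2u$ restricted to $\Sigma_t$, and since $Du \neq 0$ there, the conversion is routine. I would remark that full details of such computations appear in \cite[Section 3]{Anabel} and in \cite[Lemma 2.2]{Cederbaum.b}, and cite those rather than reproduce them.

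\textbf{Main obstacle.} The only real subtlety is Step 3: one must be slightly careful in stating precisely what "$o_2$" means (decay of the remainder together with its first two derivatives at the improved rates dictated by differentiation), and in verifying that the implicit error terms in $d\sigma$ and $H$ are genuinely $o(1)$ \emph{uniformly} over each level set $\Sigma_t$ as $t\to 0$, not merely pointwise. All of this is standard and follows from \Cref{asymptotics} together with elliptic interior estimates; Steps 1 and 2 are purely algebraic and present no difficulty.
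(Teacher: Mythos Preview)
Your proposal is correct and follows exactly the approach the paper indicates: the paper does not give a detailed proof but simply states that the proposition ``can be derived from \Cref{asymptotics} and \eqref{eq:asymptoticsR} by standard computations and arguments'' and refers to \cite[Section 3]{Anabel} and \cite[Lemma 2.2]{Cederbaum.b}, which are precisely the references you cite. Your Steps 1--3 spell out these standard computations in the expected way, and your identification of the $C^2$-closeness argument in Step 3 as the only point requiring care is accurate.
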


We will also make use of the following fact which follows from straightforward computations (see \cite[Remark 4.3]{Anabel} for details).
\begin{lemma}[ODEs for $F$ and $G$]\label{ODEs}
Let $F$ and $G$ be as in \eqref{solution F}, \eqref{solution G}, respectively, for some $c,d\in\R$, and let $\beta>0$. Then $F$ and $G$ solve the coupled ODEs
\begin{align}
F'(u)+G(u)&=-\frac{8(n-1)\,a_{\beta}}{(n-2)\beta u}\,F(u),\\
G'(u)&=\frac{4(n-1)^2\, a_{\beta}}{(n-2)^2u^2}\,F(u)
\end{align}
 for $a_{\beta}$ as in \eqref{abeta}. For $\beta=0$, we instead have
\begin{align}
F'(u)+G(u)&=\frac{2}{u}\,F(u),\\\label{G'}
G'(u)&=0.
\end{align}
\end{lemma}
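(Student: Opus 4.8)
The lemma is a direct differentiation exercise, so the plan is to compute $F'$ and $G'$ from the explicit formulas \eqref{solution F}, \eqref{solution G}, collect powers of $u$, and then observe that the constant $a_{\beta}$ defined in \eqref{abeta} has been chosen precisely so that the resulting coefficients line up with the claimed ones. First I would abbreviate $\gamma\definedas\frac{(n-1)\beta}{n-2}$, so that \eqref{solution F} reads $F(u)=cu^{2-\gamma}+du^{1-\gamma}$ and \eqref{solution G} becomes $G(u)=-\tfrac{\gamma}{u}F(u)+du^{-\gamma}=-c\gamma u^{1-\gamma}+(1-\gamma)d\,u^{-\gamma}$. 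I would also record the identity $\tfrac{F(u)}{u}=(cu+d)u^{-\gamma}$, which is what makes the right-hand sides of the asserted ODEs recognizable once the computation is done.

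Next I would differentiate. From $F'(u)=c(2-\gamma)u^{1-\gamma}+d(1-\gamma)u^{-\gamma}$, adding $G(u)$ combines the $u^{1-\gamma}$ terms to $c(2-2\gamma)u^{1-\gamma}$ and the $u^{-\gamma}$ terms to $2d(1-\gamma)u^{-\gamma}$, giving $F'(u)+G(u)=2(1-\gamma)\,\tfrac{F(u)}{u}$. Likewise, differentiating $G(u)=-c\gamma u^{1-\gamma}+(1-\gamma)d\,u^{-\gamma}$ yields $G'(u)=-c\gamma(1-\gamma)u^{-\gamma}-\gamma(1-\gamma)d\,u^{-\gamma-1}=\gamma(\gamma-1)\tfrac{F(u)}{u^{2}}$. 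It then remains only to match the prefactors $2(1-\gamma)$ and $\gamma(\gamma-1)$ against those claimed: substituting \eqref{abeta} gives $\tfrac{8(n-1)a_{\beta}}{(n-2)\beta}=\tfrac{2(n-1)}{n-2}\bigl(\beta-\tfrac{n-2}{n-1}\bigr)=2\gamma-2$ and $\tfrac{4(n-1)^{2}a_{\beta}}{(n-2)^{2}}=\tfrac{(n-1)\beta}{n-2}\bigl(\tfrac{(n-1)\beta}{n-2}-1\bigr)=\gamma(\gamma-1)$, which is exactly what is needed, so that $F'(u)+G(u)=-\tfrac{8(n-1)a_{\beta}}{(n-2)\beta u}F(u)$ and $G'(u)=\tfrac{4(n-1)^{2}a_{\beta}}{(n-2)^{2}u^{2}}F(u)$.

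Finally, the case $\beta=0$ has to be treated separately, since then one cannot divide by $\beta$ and in fact $\gamma=0$: here $F(u)=(cu+d)u$ and $G(u)\equiv d$, whence $G'(u)=0$ and $F'(u)+G(u)=(2cu+d)+d=2(cu+d)=\tfrac{2}{u}F(u)$, as claimed. There is no genuine obstacle in this proof; the only points requiring a little care are keeping the powers $u^{1-\gamma}$, $u^{-\gamma}$, $u^{-\gamma-1}$ straight in the bookkeeping, and separating the $\beta=0$ regime so as not to divide by zero.
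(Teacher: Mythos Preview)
Your proof is correct and follows exactly the approach the paper indicates: a straightforward differentiation of the explicit formulas \eqref{solution F}, \eqref{solution G}, with the abbreviation $\gamma=\tfrac{(n-1)\beta}{n-2}$ making the bookkeeping clean. The paper itself does not spell out the computation, so your write-up in fact supplies the details the paper omits.
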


Next, we collect some useful and well-known properties of the electrostatic potential $u$ evaluated on regular level sets of $u$. These can be verified by direct but lengthy computations (see e.g.~\cite[Section 2.2]{Agostiniani.2020}, \cite[Section 3]{Anabel}).

\begin{lemma}[Properties of the potential]\label{Properties}
Let $n\geq3$ and let $\Omega\subset \R^n$ be a bounded domain with smooth boundary and electrostatic potential $u$. Let $\crit$ denote the set of critical points of $u$. Then for any $\beta\geq0$, the identities  
\begin{align}\label{p.14 id 1}
D|D u|^{\beta}&=\frac{\beta}{2} |Du|^{\beta-2}D|Du|^2,\\\label{p.14 id 2}
\Delta|Du|^{\beta}&=\frac{\beta}{2} |Du|^{\beta-2}\Delta|Du|^2+\beta(\beta-2)|Du|^{\beta-2}|D|Du||^2
\end{align}
hold on $\R^{n}\setminus\left(\overline{\Omega}\cap\crit\right)$. Moreover,
\begin{align}\label{norm rel hess grad norm}
\delta(D |Du|^2,\cdot)&=2D^2u(Du,\cdot)
\end{align}
holds on $\R^{n}\setminus\Omega$. Now consider a regular level set $\{u=u_{0}\}$ of $u$. The second fundamental form $h$ and mean curvature $H$ of $\{u=u_{0}\}$ with respect to the unit normal $\nu$ given by \eqref{eq:nu} satisfy
\begin{align}\label{2ndFF}
h&=-\left.\frac{D^2u}{|Du|}\right\vert_{T\{u=u_{0}\}\times T\{u=u_{0}\}},\\\label{mean curv eucl}
H&=\frac{D^2u(Du,Du)}{|Du|^3}.
\end{align}
\end{lemma}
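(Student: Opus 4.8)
\textbf{Plan of proof of \Cref{Properties}.}
The plan is to verify each of the five identities \eqref{p.14 id 1}--\eqref{mean curv eucl} by direct computation, working in arbitrary Cartesian coordinates $(x^i)$ on $\R^n$, where $Du=(D_1u,\dots,D_nu)$, $D^2u=(D_iD_ju)$, and $|Du|^2=\sum_i (D_iu)^2$. Throughout we only use that $u$ is smooth on $\R^n\setminus\overline\Omega$ (and harmonic, though harmonicity is not actually needed for any of these particular identities); the exclusion of $\crit$ in \eqref{p.14 id 1}, \eqref{p.14 id 2} is only to guarantee that $|Du|^{\beta-2}$ makes sense, i.e.\ that we are not dividing by zero or differentiating a non-smooth power of $|Du|^2$ at a zero of $|Du|$.

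First I would prove \eqref{norm rel hess grad norm}, which is the cleanest: differentiating $|Du|^2=\sum_k (D_ku)^2$ gives $D_i|Du|^2 = 2\sum_k D_ku\, D_iD_ku = 2\,D^2u(Du,\partial_i)$, which is exactly the coordinate form of $\delta(D|Du|^2,\cdot)=2D^2u(Du,\cdot)$; this holds on all of $\R^n\setminus\Omega$ with no critical-point restriction. Next, \eqref{p.14 id 1} follows from the chain rule: away from $\crit$, $|Du|^{\beta}=\left(|Du|^2\right)^{\beta/2}$ is smooth and $D_i|Du|^{\beta} = \tfrac{\beta}{2}\left(|Du|^2\right)^{\beta/2-1}D_i|Du|^2 = \tfrac{\beta}{2}|Du|^{\beta-2}D_i|Du|^2$. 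For \eqref{p.14 id 2}, I would take the divergence of \eqref{p.14 id 1}: $\Delta|Du|^{\beta} = \tfrac{\beta}{2}\diver\!\left(|Du|^{\beta-2}D|Du|^2\right) = \tfrac{\beta}{2}|Du|^{\beta-2}\Delta|Du|^2 + \tfrac{\beta}{2}\,\delta\!\left(D|Du|^{\beta-2},D|Du|^2\right)$, then apply \eqref{p.14 id 1} again (with exponent $\beta-2$) to rewrite $D|Du|^{\beta-2}=\tfrac{\beta-2}{2}|Du|^{\beta-4}D|Du|^2$, so the second term becomes $\tfrac{\beta(\beta-2)}{4}|Du|^{\beta-4}\left|D|Du|^2\right|^2$; finally $\left|D|Du|^2\right|^2 = \left|2|Du|\,D|Du|\right|^2 = 4|Du|^2\left|D|Du|\right|^2$ (using $D|Du|^2 = 2|Du|\,D|Du|$ away from $\crit$) turns this into $\beta(\beta-2)|Du|^{\beta-2}\left|D|Du|\right|^2$, giving \eqref{p.14 id 2}.

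For the geometric identities \eqref{2ndFF} and \eqref{mean curv eucl} on a regular level set $\{u=u_0\}$: by \eqref{eq:nu} the outward unit normal is $\nu = -Du/|Du|$, so the second fundamental form is $h(X,Y) = \delta(D_X\nu,Y)$ for tangent vector fields $X,Y$ to the level set. Computing $D_X\nu = -D_X\!\left(Du/|Du|\right) = -\tfrac{1}{|Du|}D_X(Du) + \tfrac{1}{|Du|^2}\left(D_X|Du|\right)Du$; pairing with $Y$ tangent kills the second term since $\delta(Du,Y)=Y(u)=0$ along the level set, leaving $h(X,Y) = -\tfrac{1}{|Du|}\delta\!\left(D_X Du, Y\right) = -\tfrac{1}{|Du|}D^2u(X,Y)$, which is \eqref{2ndFF}. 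Taking the trace over an orthonormal basis $\{e_1,\dots,e_{n-1}\}$ of the tangent space and completing to an orthonormal basis of $\R^n$ by $e_n=\nu$, we get $H = \operatorname{tr}_{T\{u=u_0\}} h = -\tfrac{1}{|Du|}\left(\Delta u - D^2u(\nu,\nu)\right) = \tfrac{1}{|Du|}D^2u(\nu,\nu)$ using $\Delta u=0$; substituting $\nu=-Du/|Du|$ yields $H = \tfrac{1}{|Du|}\cdot\tfrac{D^2u(Du,Du)}{|Du|^2} = \tfrac{D^2u(Du,Du)}{|Du|^3}$, which is \eqref{mean curv eucl}. (Alternatively, \eqref{mean curv eucl} follows from \eqref{2ndFF} plus \eqref{norm rel hess grad norm} by noting $D^2u(Du,Du) = D^2u(Du,\nu^{\perp}\text{-part}) $ decompositions, but the trace computation is cleanest; I would present the harmonic-trace version and flag the sign conventions set in \Cref{section preliminaries} so the reader can check that the round sphere indeed gets $H=n-1$.)

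I do not expect a genuine obstacle here — every step is a one-line computation — so the only thing to be careful about is \textbf{bookkeeping of sign and normalization conventions} and the domain of validity (excluding $\crit$ exactly where a negative power of $|Du|$ appears, and noting that \eqref{norm rel hess grad norm} needs no such exclusion since it involves only $D|Du|^2$, not $D|Du|$). In the write-up I would do the computations in coordinates for \eqref{p.14 id 1}, \eqref{p.14 id 2}, \eqref{norm rel hess grad norm} and invariantly (via $\nu=-Du/|Du|$ and the trace) for \eqref{2ndFF}, \eqref{mean curv eucl}, and refer to \cite[Section 2.2]{Agostiniani.2020} or \cite[Section 3]{Anabel} for the fully detailed versions rather than reproducing every index manipulation.
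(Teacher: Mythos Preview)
Your proposal is correct and follows exactly the route the paper indicates: the paper does not give a proof at all but simply states that the identities ``can be verified by direct but lengthy computations'' and refers to \cite[Section 2.2]{Agostiniani.2020} and \cite[Section 3]{Anabel}, which is precisely what you carry out. One small inconsistency to fix: you assert early on that ``harmonicity is not actually needed for any of these particular identities,'' but you then (correctly) invoke $\Delta u=0$ in deriving \eqref{mean curv eucl}; harmonicity is essential for that trace identity, so drop the parenthetical remark.
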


For several estimates, it will be useful to apply the following inequality giving a lower bound on the norm of the Hessian of a harmonic function.
\begin{proposition}[Refined Kato inequality, {see\footnote{or see {\cite[Corollary 4.6]{Fogagnolo.2019}}  for $p=2$ or {\cite[Proposition 2.3]{Anabel}} for more detailed computations. The equality claim can be extracted from these proofs.} e.g.~\cite{Schoen}}]\label{kato prop}
Consider a Riemannian manifold $(M,g)$ and a $g$-harmonic function $f\colon M\to\R$. Then the \emph{refined Kato inequality} 
\begin{align}\label{kato}
|D^2_{\!g}f|^2_g\geq\frac{n}{n-1}|D_{\!g}|D_{\!g} f|_g|_g^2
\end{align}
holds on $M\setminus\operatorname{Crit}(f)$. Here, $\vert\cdot\vert_{g}$ denotes the tensor norm and $D_{\!g}$ denotes the covariant derivative induced by $g$. For $p\in M\setminus\operatorname{Crit}(f)$, let $W_{p}\subseteq\{f=f(p)\}$ be an open neighborhood of $p$ such that $Df\neq0$ on $W_{p}$. Then equality holds in \eqref{kato} at $p$ if and only if both
\begin{align*}
D^{2}_{g}f\vert_{p}(X,Y)&=\frac{\operatorname{tr}_{T}\left(D^{2}_{g} f\vert_{T_{p}W_{p}\times T_{p}W_{p}}\right)}{n-1}\,g_{p}(X,Y),\\
g_{p}(D_{g}\vert D_{g}f\vert_{g}\vert_{p},X)&=0
\end{align*}
hold for all $X,Y\in T_{p}W_{p}$. Here, $\operatorname{tr}_{T}$ denotes the trace with respect to the metric induced on $W_{p}$ by $g$.
\end{proposition}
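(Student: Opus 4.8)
The plan is to prove \eqref{kato} pointwise and then read off the equality case from the inequalities used. Fix $p\in M\setminus\operatorname{Crit}(f)$; since $D_gf$ is nonvanishing near $p$, the function $|D_gf|_g$ is smooth there, so both sides of \eqref{kato} are defined at $p$. Choose a $g$-orthonormal basis $e_1,\dots,e_n$ of $T_pM$ adapted to the level set, i.e.\ with $e_1=D_gf/|D_gf|_g$ at $p$ (normal to $\{f=f(p)\}$) and $e_2,\dots,e_n$ an orthonormal basis of $T_pW_p=T_p\{f=f(p)\}$, and set $f_{ij}\definedas D^2_gf|_p(e_i,e_j)$. Then harmonicity reads $\sum_{i=1}^n f_{ii}=0$. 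The chain rule gives $D_g|D_gf|_g=\tfrac{1}{|D_gf|_g}D^2_gf(D_gf,\cdot)$, so at $p$ the components of $D_g|D_gf|_g$ in this basis are exactly $f_{11},\dots,f_{1n}$; hence $|D_g|D_gf|_g|_g^2|_p=\sum_{k=1}^n f_{1k}^2$, while $|D^2_gf|_g^2|_p=\sum_{i,j=1}^n f_{ij}^2$.

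Separating the index $1$ from the tangential indices $a,b\in\{2,\dots,n\}$, and using $2-\tfrac{n}{n-1}=\tfrac{n-2}{n-1}$ and $\tfrac{n}{n-1}-1=\tfrac{1}{n-1}$, the inequality \eqref{kato} at $p$ is equivalent to
\begin{equation*}
\sum_{a,b=2}^{n}f_{ab}^2+\frac{n-2}{n-1}\sum_{a=2}^{n}f_{1a}^2\ \geq\ \frac{1}{n-1}\,f_{11}^2 .
\end{equation*}
The middle term is nonnegative, so it suffices to prove $\sum_{a,b=2}^n f_{ab}^2\geq\tfrac{1}{n-1}f_{11}^2$, which is immediate from harmonicity in the form $f_{11}=-\sum_{a=2}^n f_{aa}$ together with the Cauchy--Schwarz chain $\big(\sum_{a=2}^n f_{aa}\big)^2\leq(n-1)\sum_{a=2}^n f_{aa}^2\leq(n-1)\sum_{a,b=2}^n f_{ab}^2$. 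This proves \eqref{kato}.

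For the equality case I would trace back the three inequalities just used. Equality in \eqref{kato} forces: (i) $\tfrac{n-2}{n-1}\sum_{a}f_{1a}^2=0$, hence (as $n\geq3$) $f_{1a}=0$ for all $a\in\{2,\dots,n\}$; (ii) $\sum_{a\neq b}f_{ab}^2=0$, hence $f_{ab}=0$ for $a\neq b$; and (iii) equality in the Cauchy--Schwarz step applied to $(f_{22},\dots,f_{nn})$, hence $f_{22}=\cdots=f_{nn}=:\lambda$. By (ii) and (iii), $D^2_gf|_p(e_a,e_b)=\lambda\,\delta_{ab}=\lambda\,g_p(e_a,e_b)$ on $T_pW_p$, and since $\operatorname{tr}_T\big(D^2_gf|_{T_pW_p\times T_pW_p}\big)=\sum_{a=2}^n f_{aa}=(n-1)\lambda$, this is precisely the first asserted equality. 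By (i) and the identification of the components of $D_g|D_gf|_g$ above, the $T_pW_p$-component of $D_g|D_gf|_g|_p$ vanishes, i.e.\ $g_p(D_g|D_gf|_g|_p,X)=0$ for all $X\in T_pW_p$, which is the second asserted equality. Conversely, if both asserted equalities hold, then (i), (ii), (iii) all hold, so equality holds in \eqref{kato} at $p$.

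I do not expect a genuine obstacle here: the inequality is a sharpened Cauchy--Schwarz, the key observation being that $D_g|D_gf|_g$ is (the image under $g$ of) the ``first row'' of the Hessian in the adapted basis while harmonicity ties $f_{11}$ to the tangential trace. The only points requiring care are the algebraic bookkeeping that reduces \eqref{kato} to the displayed inequality, and checking that the equality conditions read off in the basis---manifestly independent of the $(n-1)$-fold freedom in choosing $e_2,\dots,e_n$---coincide with the intrinsic conditions in the statement.
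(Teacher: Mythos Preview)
The paper does not supply its own proof of this proposition; it is stated as a known result with references to Schoen, Fogagnolo, and the second named author's thesis, with the remark that the equality claim can be extracted from those proofs. Your argument is correct and is precisely the standard direct computation one would expect: decompose the Hessian in an orthonormal frame adapted to the level set, identify $|D_g|D_gf|_g|_g^2$ with the squared norm of the first row of the Hessian, and reduce the inequality to Cauchy--Schwarz on the tangential trace via harmonicity. The equality analysis is clean and correctly reads off the two intrinsic conditions from the three elementary inequalities used. One minor remark: you invoke $n\geq3$ to conclude $f_{1a}=0$ from $\tfrac{n-2}{n-1}\sum_a f_{1a}^2=0$, and indeed the ``only if'' direction of the equality characterization as stated fails for $n=2$ (where \eqref{kato} is always an equality); since the paper works under the standing assumption $n\geq3$, this is harmless, but it is worth noting that the proposition as literally stated requires this hypothesis for the equality claim.
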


To deal with the presence of critical points for small $\beta$ when applying the divergence theorem, we will take advantage of the following Morse--Sard theorem for real analytic functions.
\begin{theorem}[Morse--Sard theorem for real analytic functions, {\cite[Theorem 1]{Soucek1972}}]\label{morse sard}
Consider a real analytic function $f\colon D\to \R$ for an open set $D\subset\R^n$ and denote by $\operatorname{Crit}(f)$ the set of critical points of $f$. Then $f(\operatorname{Crit}(f)\cap K)$ is finite for all compact subsets $K\Subset D$.
\end{theorem}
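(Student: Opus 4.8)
The plan is to reduce the statement to \L{}ojasiewicz's curve selection lemma for semianalytic sets; this is the point at which the real analyticity of $f$ is used decisively. I would argue by contradiction: suppose $f(\operatorname{Crit}(f)\cap K)$ is infinite for some compact $K\Subset D$. Then I can choose points $x_{j}\in\operatorname{Crit}(f)\cap K$, $j\in\N$, with pairwise distinct critical values $c_{j}\definedas f(x_{j})$. By compactness of $K$, after passing to a subsequence I may assume $x_{j}\to x_{*}\in K$, and by continuity $c_{j}\to c_{*}\definedas f(x_{*})$; discarding at most one index ensures $c_{j}\neq c_{*}$ for all $j$. Hence each $x_{j}$ lies in the set
\[
A\definedas\operatorname{Crit}(f)\cap\{f\neq c_{*}\}=\{Df=0\}\cap\bigl(\{f-c_{*}>0\}\cup\{c_{*}-f>0\}\bigr),
\]
while $x_{*}\notin A$. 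After localizing to a closed ball $B\Subset D$ centered at $x_{*}$, the set $A\cap B$ is semianalytic and $x_{*}$ lies in its closure (being a limit of the $x_{j}$, which are eventually in $B$) but not in $A\cap B$ itself.

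The key step is then to invoke the curve selection lemma to obtain a real analytic arc $\gamma\colon[0,\varepsilon)\to B$ with $\gamma(0)=x_{*}$ and $\gamma(t)\in A\cap B$ for every $t\in(0,\varepsilon)$. Writing $g\definedas f\circ\gamma$, which is real analytic on $[0,\varepsilon)$, the chain rule gives $g'(t)=Df(\gamma(t))\cdot\gamma'(t)=0$ for all $t\in(0,\varepsilon)$, since $\gamma(t)$ is a critical point of $f$. Thus $g$ is constant on $(0,\varepsilon)$, hence equal to $g(0)=f(x_{*})=c_{*}$ on all of $[0,\varepsilon)$ by continuity. But $\gamma(t)\in A\subseteq\{f\neq c_{*}\}$ forces $g(t)\neq c_{*}$ for $t\in(0,\varepsilon)$, a contradiction. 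This would complete the proof.

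The only substantive ingredient---and the main obstacle---is the curve selection lemma, i.e.\ the fact that a boundary point of a semianalytic set is the endpoint of a real analytic arc running inside the set; this is where real analyticity is genuinely needed, as it is what makes $\operatorname{Crit}(f)=\{Df=0\}$, and therefore $A$, semianalytic. Everything else is soft: compactness, the chain rule, and the fact that a real analytic function whose derivative vanishes on an interval is constant there. As an alternative, I could avoid the curve selection lemma and instead use \L{}ojasiewicz's local stratification of the analytic set $\operatorname{Crit}(f)$ into finitely many connected real analytic submanifolds $S$; on each such $S$ the gradient $Df$ vanishes identically, so the tangential derivative of $f$ along $S$ vanishes and $f|_{S}$ is constant, and covering $K$ by finitely many such neighborhoods then shows $f(\operatorname{Crit}(f)\cap K)$ is finite. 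In either approach the real work lies in the structure theory of (semi)analytic sets, which is exactly what makes this statement considerably stronger than the $C^{k}$ Morse--Sard theorem for general smooth functions.
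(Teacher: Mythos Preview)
Your argument is correct. The curve selection lemma for semianalytic sets is exactly the right tool: the set $A=\{Df=0\}\cap\{f\neq c_{*}\}$ is semianalytic (as a finite Boolean combination of zero sets and sign conditions of real analytic functions), $x_{*}$ lies in $\overline{A}\setminus A$, and the resulting analytic arc yields the contradiction you describe. The alternative via a local stratification of $\operatorname{Crit}(f)$ into finitely many connected analytic submanifolds is equally valid and arguably closer in spirit to the original proof in \cite{Soucek1972}.

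Note, however, that the paper does not give its own proof of this theorem: it is quoted as \cite[Theorem~1]{Soucek1972} and used as a black box to derive \Cref{coro:finite} and \Cref{coro:Dusquared}. So there is no in-paper proof to compare against; your proposal supplies a self-contained argument where the paper simply appeals to the literature.
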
	

\begin{corollary}[$u(\crit)$ is finite]\label{coro:finite}
As a consequence of the Morse--Sard~\Cref{morse sard}, the fact that $\partial\Omega$ is a regular level set of $u$, continuity of $\vert Du\vert$, and \Cref{rem:infty}, $u(\crit)$ is finite.
\end{corollary}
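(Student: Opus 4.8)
The plan is to prove \Cref{coro:finite}, namely that $\crit$ is finite. First I would recall that $u$ is real analytic on $\R^{n}\setminus\Omega$ (as noted after \eqref{eq:nu}, citing real analyticity of harmonic functions), so that in particular $\crit=\{x\in\R^{n}\setminus\overline\Omega : Du(x)=0\}$ is a relatively closed subset of the open set $\R^{n}\setminus\overline\Omega$. The strategy is to cover $\crit$ by the complement of three ``good'' regions and check $\crit$ meets each of them in a finite set: a neighborhood of $\partial\Omega$, a neighborhood of infinity, and a single compact region in between on which the Morse--Sard~\Cref{morse sard} applies directly.

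Here are the steps in order. \textbf{Step 1 (near $\partial\Omega$).} Since $\partial\Omega$ is a regular level set of $u$ (recall $\nu(u)<0$ on $\partial\Omega$ by the Hopf lemma), $|Du|>0$ on $\partial\Omega$; by continuity of $|Du|$ there is an open set $U\supset\partial\Omega$ with $|Du|>0$ on $\overline{U}$, hence $\crit\cap U=\emptyset$. \textbf{Step 2 (near infinity).} By \Cref{rem:infty} there is a neighborhood $\mathcal{N}\supset B_{R}$ of infinity with no critical points of $u$, so $\crit\cap\mathcal{N}=\emptyset$. \textbf{Step 3 (the compact middle).} The set $K\definedas(\R^{n}\setminus\Omega)\setminus(U\cup\mathcal{N})$ is closed and bounded (bounded because its complement contains $\mathcal N\supset B_R$), hence compact, and $\crit\subset K$ by Steps 1 and 2 (and since $\crit\subset\R^n\setminus\overline\Omega\subset (\R^n\setminus\Omega)$). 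Choose any open $D\subset\R^{n}$ with $K\Subset D\subset\R^{n}\setminus\Omega$ (e.g.\ a bounded open neighborhood of $K$ staying off $\overline\Omega$), on which $u$ is real analytic; \Cref{morse sard} gives that $u(\crit\cap K)=u(\operatorname{Crit}(u|_D)\cap K)$ is a finite set. \textbf{Step 4 (from finitely many critical values to finitely many critical points).} This is the only point that needs a short argument beyond quoting the listed results: a priori each critical value could be attained on a positive-dimensional analytic variety. One clean way is to invoke the classical fact (a consequence of real analyticity, via \L ojasiewicz-type structure theorems, or more elementarily via the identity theorem applied along analytic arcs) that the critical set of a nonconstant real analytic function is locally contained in a proper analytic subvariety; combined with compactness of $K$ and the finiteness of critical \emph{values}, one concludes $\crit$ is finite. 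Alternatively, and perhaps more in the spirit of the paper, one notes that $u$ is nonconstant and harmonic, so on the compact set $K$ the gradient $Du$ vanishes on a set that, being the common zero set of finitely many real analytic functions $D_1u,\dots,D_nu$, has no accumulation point unless $Du\equiv0$ on a neighborhood — which is impossible for a nonconstant harmonic function by unique continuation. Hence $\crit\cap K$ is discrete in the compact $K$, thus finite.

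I expect \textbf{Step 4} to be the main (really the only) obstacle: the Morse--Sard theorem as quoted controls only the image $u(\crit\cap K)$, not the size of $\crit\cap K$ itself, so one must separately rule out a critical level set (or a critical continuum) of positive dimension. The cleanest self-contained route is the unique continuation / identity-theorem argument just sketched, since $u$ is a nonconstant harmonic — hence real analytic — function, so its gradient cannot vanish on any set with an interior accumulation structure; a compactness argument on $K$ then upgrades ``discrete'' to ``finite.'' Everything else (Steps 1--3) is bookkeeping with the already-established facts: regularity of the level set $\partial\Omega$, \Cref{rem:infty}, and the statement of \Cref{morse sard}.

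I would then close by remarking that finiteness of $\crit$ is exactly what is needed later to justify excising critical points when applying the divergence theorem to \eqref{RobDivId5}, as the excised neighborhoods can be taken to be finitely many small balls whose boundary contributions vanish in the limit.
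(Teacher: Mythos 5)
Your Steps 1--3 are a correct and careful reading of the ingredients listed in the paper: by regularity of $\partial\Omega$, continuity of $\vert Du\vert$, and \Cref{rem:infty}, the set $\crit$ is a compact subset of a bounded open set on which $u$ is real analytic, and then \Cref{morse sard} tells you the set of critical \emph{values} $u(\crit)$ is finite. You also correctly identify that the only nontrivial content is Step~4: upgrading finitely many critical values to finitely many critical points. Unfortunately, both arguments you offer for Step~4 fail, and for the same reason.

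Your argument~(b) asserts that the common zero set of the finitely many real analytic functions $D_1u,\dots,D_nu$ ``has no accumulation point unless $Du\equiv0$ on a neighborhood.'' This is a false transfer of the one-variable identity theorem to several variables. In $\R^n$ with $n\geq2$, a nonzero real analytic function can vanish along an entire submanifold (e.g.\ $f(x)=x_1$ vanishes on a hyperplane), and the critical set of a nonconstant harmonic function can have positive dimension. A concrete example: $u(x,y,z)=xy$ is harmonic on $\R^3$, $Du=(y,x,0)$, and $\operatorname{Crit}(u)$ is the entire $z$-axis --- a continuum with accumulation points at every one of its points, yet $Du\not\equiv0$. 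Unique continuation for harmonic functions rules out $Du\equiv 0$ on an open set, but it does not rule out $Du$ vanishing on a compact arc. Your argument~(a) fails similarly: being ``locally contained in a proper analytic subvariety'' is compatible with $\crit$ containing curves or higher-dimensional pieces, and adding ``finitely many critical values'' does not cut these down to points --- a curve of critical points all lying in a single level set $\{u=v_i\}$ is perfectly consistent with $u(\crit)$ being finite. So neither (a) nor (b) closes the gap; \Cref{morse sard} controls the \emph{image} $u(\crit)$, and something genuinely different is needed to control $\crit$ itself. (Note that in $n=2$ your reasoning would in fact be salvageable, since there $u$ is locally the real part of a holomorphic function and the critical points are the isolated zeros of its derivative --- this may be the source of the intuition --- but the theorem is stated for $n\geq3$.)

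The paper itself does not spell out a proof of \Cref{coro:finite} (it only lists the ingredients), so I cannot say whether the authors have an argument in mind that you are missing or whether the corollary as stated is stronger than what those four ingredients deliver; but your write-up makes the gap visible rather than closing it. If you want to make the logic airtight you would need a structural input specific to this setting --- for instance a result asserting that the critical set of the exterior electrostatic potential is discrete, or a dimension/stratification bound that, combined with finiteness of critical values, actually forces isolated points --- and neither of the two ``clean routes'' you sketch supplies that.
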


\begin{remark}[On the size of $\crit$]\label{rem:sizecrit}
By the work of Cheeger--Naber--Valtorta~\cite{Cheeger} and Hardt--Hoffmann-Ostenhof--Hoffmann-Ostenhof--Nadirashvili~\cite{Hardt}, we know that the Hausdorff dimension of the critical set $\crit$ is bounded above by $n-2$ as $u$ is harmonic. Hence in particular $\crit$ is a set of Lebesgue measure zero or in other words $u$ is regular Lebesgue almost everywhere. Moreover, by \Cref{rem:infty}, $\crit$ is compact.
\end{remark}

\begin{corollary}[$\vert Du\vert^{2}(\critDu)$ is discrete]\label{coro:Dusquared}
As a consequence of the Morse--Sard~\Cref{morse sard} and the real analyticity of $\vert Du\vert^{2}$, $\vert Du\vert^{2}(\critDu)$ is discrete. Moreover, as $\capa>0$ and $\vert D\vert Du\vert^{2}\vert=\vert D\vert Du_{R_{\Omega},0}\vert^{2}\vert+o(\vert x\vert^{-2n+1})$ as $\vert x\vert\to\infty$ by \Cref{asymptotics}, there is a neighborhood $\widetilde{\mathcal{N}}$ of infinity such that $\widetilde{\mathcal{N}}\cap\critDu=\emptyset$. Also $\crit\subseteq\critDu$ as critical points of $u$ are global minima of $\vert Du\vert^{2}$.
\end{corollary}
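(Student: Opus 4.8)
This corollary collects three assertions: that $\critDu$ is discrete, that $\critDu$ is disjoint from a neighborhood of infinity, and that $\crit\subseteq\critDu$. The plan is to settle the last two immediately and to concentrate on the discreteness. For the inclusion $\crit\subseteq\critDu$: by \eqref{norm rel hess grad norm} one has $\delta(D|Du|^{2},\cdot)=2D^{2}u(Du,\cdot)$, which vanishes wherever $Du=0$; equivalently, $|Du|^{2}\geq 0$ with equality exactly on $\crit$, so points of $\crit$ are global minima of $|Du|^{2}$ and hence critical for it. For the neighborhood of infinity: inserting the expansions of $Du$ and $D^{2}u$ from \Cref{asymptotics} into $D_{k}|Du|^{2}=2\sum_{i}D_{i}u\,D_{i}D_{k}u$ yields $D|Du|^{2}=D|Du_{R_{\Omega},0}|^{2}+o(|x|^{-(2n-1)})$ as $|x|\to\infty$, while differentiating \eqref{radial u} gives $|D|Du_{R_{\Omega},0}|^{2}|=2(n-1)(n-2)^{2}\capa^{2}|x|^{-(2n-1)}$; since $\capa>0$ by \Cref{capacity in terms of potential}, the error is of lower order and $|D|Du|^{2}|>0$ for all sufficiently large $|x|$, so any such region serves as $\widetilde{\mathcal{N}}$.

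It remains to show that $\critDu$ is discrete. Recall from \Cref{section preliminaries} that $|Du|^{2}$ is real analytic, hence $\critDu=\{D|Du|^{2}=0\}$ is (locally) a real analytic set. Applying the Morse--Sard \Cref{morse sard} to $|Du|^{2}$ shows that the critical values $|Du|^{2}(\critDu\cap K)$ form a finite set for every compact $K$. To pass from this to discreteness of the critical \emph{set}, I would use the standard stratification of the real analytic set $\critDu$ into smooth real analytic strata: since $D|Du|^{2}$ vanishes on all of $\critDu$, the differential of $|Du|^{2}$ tangent to each stratum vanishes, so $|Du|^{2}$ is locally constant along every stratum; consequently a positive-dimensional stratum would be contained in a single level set $\{|Du|^{2}=c\}$, and when $c=0$ it lies in $\crit$, which is finite by \Cref{coro:finite}, forcing dimension $0$.

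The genuinely substantive case is $c>0$: one must rule out a positive-dimensional critical submanifold $\Gamma$ of $|Du|^{2}$ along which $Du\neq 0$. On such a $\Gamma$ one has $|Du|^{2}\equiv c>0$ and, by \eqref{norm rel hess grad norm}, $D^{2}u(Du,\cdot)\equiv 0$, i.e.\ $Du$ lies in the kernel of the harmonic (hence trace-free) Hessian $D^{2}u$ everywhere along $\Gamma$. \textbf{Converting this into a contradiction is the main obstacle}; the plan is to differentiate the relation $D^{2}u(Du,\cdot)=0$ in directions tangent to $\Gamma$, feed in $\Delta u=0$, and use the refined Kato inequality of \Cref{kato prop} to exploit the rigidity of harmonic functions. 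I would also point out that the later divergence-theorem arguments only need the \emph{values} of $|Du|^{2}$ near $0$ to form a discrete set, which \Cref{morse sard} already provides directly, so the subsequent results are robust even if one only extracts this weaker conclusion. Finally, combining discreteness of $\critDu$ with the neighborhood of infinity constructed above gives exactly the control on the exceptional set that is needed when the divergence theorem is applied.
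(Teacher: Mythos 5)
Your handling of the inclusion $\crit\subseteq\critDu$ and of the neighborhood of infinity is correct and matches what a careful proof should say. The trouble is entirely in the first assertion, and you are right to flag it: the combination of the Morse--Sard \Cref{morse sard} and the real analyticity of $|Du|^2$ that the corollary cites does \emph{not} by itself imply that $\critDu$ is discrete. Morse--Sard controls the set of critical \emph{values} (finitely many on every compact), not the set of critical points, and the zero set of the real analytic map $D|Du|^2$ can in principle have positive-dimensional components---along each of which $|Du|^2$ is, as you observe, locally constant, which is perfectly compatible with Morse--Sard. Your stratification-plus-rigidity sketch is a reasonable thing to try, but you have not closed it and it is not clear that it can be closed: ruling out a positive-dimensional critical set along which $D^2u(Du,\cdot)\equiv 0$ with $Du\neq0$ is a genuinely nontrivial rigidity question (one may worry, for instance, about an axisymmetric exterior potential producing an entire circle of such points). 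The gap you identify is therefore real, but it is a gap in the paper's stated justification rather than a failure of your reasoning; the paper gives no argument beyond the phrase ``as a consequence of'' the two cited facts, and those facts alone are insufficient.

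Your closing remark is the right salvage, and worth emphasizing. Everything the corollary is actually used for in the proof of \Cref{lem:regZ} follows from the weaker fact that $|Du|^2\bigl(\critDu\cap K\bigr)$ is finite for every compact $K$ (which is exactly \Cref{morse sard}), together with the finiteness of $\crit$ from \Cref{coro:finite} and the neighborhood of infinity you already constructed. These give: (i) each point of $\crit$ is isolated in $\critDu$, since a sequence in $\critDu\setminus\crit$ approaching a point of $\crit$ would carry critical values $|Du|^2\to 0$ through a finite set of positive numbers, which is impossible; hence (ii) $W_\varepsilon\cap\critDu=\crit$ for all sufficiently small $\varepsilon>0$; and (iii) with $\delta$ chosen below the smallest positive critical value of $|Du|^2$ on a suitably large compact set, $\{|Du|^2=\varepsilon\}$ is a regular level set for all $0<\varepsilon<\delta$. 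These are precisely the ingredients the divergence-theorem argument needs, so nothing downstream is imperiled by the overstatement---but the corollary as written should be weakened (or supplied with an actual proof of discreteness if one exists).
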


\section{The divergence inequality} \label{section divergence inequality}
In this section, we will prove the geometric differential inequality \eqref{RobDivId5} and \Cref{divId Thm}. Our proof uses similar ideas as in the proof of \cite[Theorem 1.3]{Cederbaum.b} and of \cite{Robinson.1977} but also contains new ideas such as the application of the refined Kato inequality~\eqref{kato}. The rigidity part of the proof is new. 
\begin{proof}[Proof of \Cref{divId Thm}]
All claims in \Cref{divId Thm} are restricted to 
\begin{equation*}
V\definedas\R^{n}\setminus\left(\overline{\Omega}\cup\crit\right),
\end{equation*}
hence we only work on the open set $V\subset\R^{n}$. This allows us to apply \eqref{p.14 id 1}, \eqref{p.14 id 2}, and \eqref{norm rel hess grad norm} for $\beta\geq0$. For fixed $c,d\in\R$, we consider the smooth vector field
\begin{align*}
Z\definedas F(u)D|Du|^{\beta}+G(u)|Du|^{\beta} Du
\end{align*} 
on $V$, with $F$ and $G$ as in \eqref{solution F}, \eqref{solution G}, respectively. Exploiting that $u$ is harmonic, we deduce from \eqref{p.14 id 1}, \eqref{p.14 id 2}, and \eqref{norm rel hess grad norm} that
\begin{align*}
\diver Z
=&\;\frac{\beta}{2}\left(F'(u)+G(u)\right)|Du|^{\beta-2}\delta(D|Du|^2,Du)\\
&+\frac{\beta}{2} F(u)|Du|^{\beta-2}\left[\Delta|Du|^2+2(\beta-2)|D|Du||^2\right]+G'(u)|Du|^{\beta+2}
\end{align*}
on $V$. When $\beta=0$, this simplifies to
\begin{align*}
\diver Z=G'(u)|Du|^{2}=0
\end{align*}
on $V$ by \eqref{G'} which proves \eqref{RobDivId5} for $\beta=0$. Now let $\beta>0$. We apply the Bochner formula~\eqref{eq:Bochner} to get
\begin{align*}
|Du|^2\,\diver Z=\frac{\beta}{2}|Du|^{\beta}&\;\biggl[\left(F'(u)+G(u)\right)\,\delta(D|Du|^2,Du)\\
&+2F(u)\!\left(|D^2u|^2+(\beta-2)|D|Du||^2\right)+\frac{2G'(u)}{\beta}|Du|^{4}\biggr]
\end{align*}
on $V$. Note that $F(u)\geq0$ on $V$ for $c,d\in\R$ with $c+d\geq0$, $d\geq0$ by its definition \eqref{solution F} as $0<u<1$ on $V$. We can hence apply the refined Kato inequality \eqref{kato} to arrive at 
\begin{align*}
|Du|^2\,\diver Z
\geq\frac{\beta}{2}|Du|^{\beta}&\;\biggl[(F'(u)+G(u))\,\delta(D|Du|^2,Du)\\
&+\frac{2a_{\beta}F(u)}{\beta}|D|Du||^2+\frac{2G'(u)}{\beta}|Du|^{4}\biggl]
\end{align*}
on $V$, where we have used the definition of $a_{\beta}$ from \eqref{abeta}. Taking advantage of \Cref{ODEs}, we obtain \eqref{RobDivId5} by algebraic manipulations. Moreover, $a_{\beta}\geq0$ for $\beta\geq \frac{n-2}{n-1}$ which gives \eqref{divPos} on $V$, recalling that $F(u)\geq0$ on $V$.

Finally, we show the rigidity claim. First, if $\Omega$ is a round ball, one computes that equality holds in \eqref{divPos} by \eqref{radial u}. On the other hand, if equality holds in \eqref{divPos}, the right hand side of \eqref{RobDivId5} has to vanish as well and in particular we must have equality in the refined Kato inequality \eqref{kato}. Hence by \Cref{kato prop} we know that
\begin{align*}
D^{2}u(X,Y)&=\frac{\operatorname{tr}_{T}\left(D^{2} u\vert_{TW\times TW}\right)}{n-1}\,\delta(X,Y)
\end{align*}
holds on $V$ for all regular level sets $W=\{u=u_{0}\}\cap V$ of $u$ in $V$ and all vector fields $X,Y\in\Gamma(TW)$. By \Cref{Properties}, this shows that all regular (pieces of) level sets of $u$ are totally umbilic. As $\partial\Omega$ is a regular level set of $u$ and thus also has a (one-sided) tubular neighborhood in $\R^{n}\setminus\Omega$ which is entirely contained in $V$, it follows by continuity that $\partial\Omega$ is totally umbilic in $(\R^{n},\delta)$. As it is also connected and closed (i.e., compact without boundary), it is necessarily a round sphere and thus $\Omega$ is a round ball as claimed.
\end{proof}
	
\section{The parametric geometric inequality}\label{section geometric results}
In this section, we will prove \Cref{geom inequ thm} as an application of the divergence theorem to the vector field $Z$ from the proof of \Cref{divId Thm}. This is rather straightforward if $u$ has no critical points. Most of the work goes into handling $Z$ and the divergence theorem near the critical points of $u$, see \Cref{lem:regZ}. Some of the analytic arguments are inspired by those in \cite[Proposition 3.1, Lemma 3.4]{Agostiniani.2020}, but we add new twists to deal with additional difficulties, in particular to handle the different asymptotics\footnote{In \cite{Agostiniani.2020}, the analysis of some vector fields and the divergence theorem is performed after a conformal transformation into an asymptotically cylindrical picture.}, the singularity\footnote{The singularity of the integrand of the corresponding function $F$ is milder in \cite{Agostiniani.2020} due to the fact that they work in a conformal picture.} of the integrand of the function $\zeta$ in the proof of \Cref{lem:regZ}, and the case $\beta=\frac{n-2}{n-1}$ which requires more subtle handling in our setting. Also, our approach applies to general domains and not only to domains of the type $\{u_{0}<u<u_{1}\}$, see \Cref{section discussion} for more details. The divergence theorem for $Z$ will then be used to establish  \Cref{geom inequ thm} via \Cref{prop int id Rob} at the end of this section.

We recall from the proof of \Cref{divId Thm} that the vector field
\begin{align}\label{def:Z}
Z= F(u)D|Du|^{\beta}+G(u)|Du|^{\beta} Du
\end{align} 
was defined on the open set
\begin{equation}\label{def:V}
V=\R^{n}\setminus\left(\overline{\Omega}\cup\crit\right).
\end{equation}
 Recall furthermore from the proof of \Cref{divId Thm} that
\begin{align}
\begin{split}\label{eq:divZ}
\diver Z
=&\;F(u)|Du|^{\beta-4}\left\lbrace a_{\beta}\left\vert \frac{2(n-1)\vert Du\vert^{2}}{(n-2)u}Du-D\vert Du\vert^{2}\right\vert^{2}\right.\\
&\quad\quad\quad\quad\quad\quad\quad +\left.\beta\vert Du\vert^{2}\left(\vert D^{2}u\vert^{2}-\frac{n}{n-1}\vert D\vert Du\vert\vert^{2}\right)\right\rbrace
\end{split}
\end{align}
holds on $V$, where we have applied \Cref{ODEs}. In particular, the vector field $Z_{R_{\Omega}}$ corresponding to $\Omega=B_{R_{\Omega}}(0)$, with $R_{\Omega}$ given by \eqref{eq:asymptoticsR}, is smooth on $V_{R_{\Omega}}=\R^{n}\setminus\overline{B_{R_{\Omega}}(0)}$ as $u_{R_{\Omega},0}$ has no critical points, and is continuous with continuous derivatives up to $\mathbb{S}^{n-1}_{R_{\Omega}}(0)$. It satisfies $\diver Z_{R_{\Omega}}=0$ on $\R^{n}\setminus B_{R_{\Omega}}(0)$. We will rely on the following lemma to prove \Cref{geom inequ thm}.

\begin{lemma}[Extending $Z$ and $\nu$ to $\R^{n}\setminus\Omega$]\label{lem:regZ}
Let $\beta\geq\frac{n-2}{n-1}$, $c,d\in\R$. Then $Z$ and $\diver Z$ extend continuously to $\partial\Omega$. Extending $Z$, $\diver Z$, and $\nu=-\frac{Du}{\vert Du\vert}$ to $\crit$ by $+\infty$ makes them Lebesgue-measurable functions on $\R^{n}\setminus\Omega$ such that
\begin{enumerate}
\item $\diver Z\in L^{1}(\R^{n}\setminus\Omega)$,
\item $\delta(Z,\nu)\in L^{1}(\{u=u_{0}\}; d\sigma)$ for any regular level set $\{u=u_{0}\}$ of $u$, with
\begin{align}\label{asy:deltaZnu}
\lim_{u_{0}\to0+}\int_{\{u=u_{0}\}}\delta(Z,\nu)\,d\sigma&=\lim_{u_{0}\to0+}\int_{\{u=u_{0}\}}\delta\left(Z_{R_{\Omega}},\frac{x}{\vert x\vert}\right)\,d\sigma,
\end{align}
\item and the divergence theorem
\begin{align}\label{divthm}
\int_{U} \diver Z\,d\mu&=\int_{\partial U} \delta(Z,\eta)\,d\sigma
\end{align}
holds on any bounded domain $U\subseteq \R^{n}\setminus\Omega$ with smooth boundary $\partial U$ satisfying $\partial U\cap\crit=\emptyset$. Here, $\eta$ denotes the unit normal to $\partial U$ pointing out of $U$ and $d\sigma$ denotes the area measure induced on $\partial U$.
\end{enumerate}
\end{lemma}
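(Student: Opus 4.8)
The plan is to establish the three claims of \Cref{lem:regZ} by separating the analysis near $\partial\Omega$, near the finitely many critical points in $\crit$ (finite by \Cref{coro:finite}), and near infinity, and then gluing everything via a cutoff/exhaustion argument. First I would note that on $\partial\Omega$ the potential $u$ has a regular level set, so $\vert Du\vert>0$ there and a one-sided tubular neighborhood of $\partial\Omega$ lies in $V$; hence $Z$, built from $u$, $|Du|^{\beta}$, $F(u)$, $G(u)$ (all smooth where $|Du|\neq0$), together with $\diver Z$ as given in \eqref{eq:divZ}, extends smoothly — in particular continuously — up to $\partial\Omega$. Likewise $\nu=-Du/|Du|$ extends continuously there. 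So the only genuine issues are the isolated critical points and the behavior at infinity.

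For claim (1), near a critical point $p\in\crit$ I would derive a quantitative bound $|Z|\lesssim |x-p|^{-\alpha}$ and $|\diver Z|\lesssim |x-p|^{-\gamma}$ with exponents strictly below $n$, so that both are locally integrable and $\crit$ (being finite, hence measure zero) is negligible. The key inputs are: $u(p)>0$ and $u$ is real analytic, so near $p$ one has $|Du|\lesssim |x-p|^{k}$ for some integer $k\ge1$ (the order of vanishing), while $|D^{2}u|$, $|D|Du||$ are controlled by $|x-p|^{k-1}$ and $|D|Du|^{2}|\lesssim |x-p|^{2k-1}$; since $u$ stays bounded away from $0$ the factors $u^{1-\frac{n-1}{n-2}\beta}$ and $u^{-1}$ in $F,G$ are harmless. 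Plugging these into \eqref{def:Z} and \eqref{eq:divZ} and using $\beta\ge\frac{n-2}{n-1}>0$, one checks that the negative powers of $|Du|$ appearing (e.g. $|Du|^{\beta-4}$ times the squared bracket, which is itself $O(|x-p|^{2k})$ plus $|Du|^2$ times a Kato-type remainder) recombine to a \emph{nonnegative} power of $|x-p|$, or at worst a power $>-n$; the crucial structural point, already visible in \eqref{divPos}, is that $|Du|^{2}\diver Z\ge0$ and more precisely $|Du|^{2}\diver Z$ is a genuine (bounded near $p$) combination, so $\diver Z$ only blows up like $|Du|^{-2}\sim|x-p|^{-2k}$ at worst, which after the $|Du|^{\beta-2}F(u)$ weighting and the explicit form of the bracket is integrable. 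I would also invoke \Cref{coro:Dusquared} to know $\critDu$ is discrete, which is what lets one talk about the order of vanishing cleanly. This order-of-vanishing bookkeeping near $\crit$ is, I expect, the main obstacle — one has to be careful that no spurious non-integrable singularity survives, especially in the delicate case $\beta=\frac{n-2}{n-1}$ where $a_{\beta}=0$ and the Kato remainder term carries the whole divergence.

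For claim (2), I would use \Cref{limits of FGH}: near infinity $F(u)\sim d\,\capa^{1-\frac{(n-1)\beta}{n-2}}|x|^{-(n-2)+(n-1)\beta}$, $G(u)\sim d\,\capa^{-\frac{(n-1)\beta}{n-2}}[1-\frac{(n-1)\beta}{n-2}]|x|^{(n-1)\beta}$, $|Du|^{\beta}\sim (n-2)^{\beta}\capa^{\beta}|x|^{-(n-1)\beta}$, $d\sigma=(1+o(1))\,d\sigma_{\mathbb{S}^{n-1}_{R}}$, together with the matching asymptotics $D|Du|^{\beta}=D|Du_{R_{\Omega},0}|^{\beta}+o(\cdots)$ and $Du=Du_{R_{\Omega},0}+o(\cdots)$ from \Cref{asymptotics}/\eqref{eq:asymptoticsR}. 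Each summand of $\delta(Z,\nu)$ on a far level set $\{u=u_0\}$ agrees with the corresponding summand for $Z_{R_\Omega}$ up to a multiplicative $(1+o(1))$, and the sphere has area $|x|^{n-1}|\mathbb{S}^{n-1}|(1+o(1))$, so the integral scales like $|x|^{n-1}\cdot|x|^{-(n-1)}\cdot(\text{const})$, i.e. converges to the same finite limit as for the ball; integrability of $\delta(Z,\nu)$ on any fixed regular level set follows since such a level set meets no critical point and is compact. Then for claim (3), on a bounded domain $U\subseteq\R^n\setminus\Omega$ with smooth boundary avoiding $\crit$, I would remove small balls $B_\varepsilon(p)$ around each $p\in\crit\cap U$, apply the classical divergence theorem on the smooth domain $U\setminus\bigcup B_\varepsilon(p)$, and let $\varepsilon\to0$: the interior integral converges to $\int_U\diver Z\,d\mu$ by dominated convergence using the $L^1$ bound from (1), while the boundary contributions from the spheres $\partial B_\varepsilon(p)$ vanish because $|Z|\lesssim\varepsilon^{-\alpha}$ with $\alpha<n-1$ (same exponent count as in (1), one order better than the divergence), so $\int_{\partial B_\varepsilon(p)}|Z|\,d\sigma\lesssim\varepsilon^{n-1-\alpha}\to0$. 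Assembling these three pieces gives the lemma; the $\varepsilon\to0$ boundary estimate and the $L^1$ domination are where the care from the critical-point analysis in (1) gets reused.
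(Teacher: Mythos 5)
Your treatment of continuity at $\partial\Omega$ and of Claim 2 (the asymptotics at infinity) matches the paper and is fine. The gap is in Claims 1 and 3, where you rely on a pointwise singularity bound $|\diver Z|\lesssim|x-p|^{-\gamma}$ with $\gamma<n$ near each $p\in\crit$ (and similarly $|Z|\lesssim|x-p|^{-\alpha}$ with $\alpha<n-1$). From \eqref{eq:divZ} one has $\diver Z\lesssim |Du|^{\beta-2}\bigl(|Du|^{4}+|D^{2}u|^{2}\bigr)$, and since $|D^{2}u|$ is merely bounded near $p$, this gives $\diver Z\lesssim |Du|^{\beta-2}$ with $\beta-2<0$. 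Controlling this requires a \emph{lower} bound $|Du|\gtrsim |x-p|^{K}$, and then integrability forces $K<\tfrac{n}{2-\beta}$; for $\beta=\tfrac{n-2}{n-1}$ that threshold is $K<n-1$. But nothing rules out critical points of high order: the electrostatic potential is harmonic, and the leading Taylor polynomial of $u-u(p)$ can be of arbitrary degree $k$, so $|Du|$ can vanish like $|x-p|^{k-1}$ (or faster along directions where $\nabla P_{k}$ degenerates), i.e.\ $K$ can exceed $n-1$. Your assertion that ``the negative powers of $|Du|$ \dots recombine to a nonnegative power of $|x-p|$, or at worst a power $>-n$'' is therefore unsubstantiated and can actually fail, and the same issue defeats the small-ball excision argument for Claim 3 since the boundary contribution $\int_{\partial B_{\varepsilon}(p)}|Z|\,d\sigma$ need not vanish.

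The paper circumvents this entirely: rather than estimating $\diver Z$ pointwise, it cuts off with $\Theta_{\varepsilon}=\xi_{\varepsilon}\circ|Du|^{2}$, applies the coarea formula along level sets $\{|Du|^{2}=s\}$, and analyzes the auxiliary quantity $\zeta(r)=\int_{\partial(U\cap W_{r})}|D|Du|^{2}|\,d\sigma=2\int_{U\cap W_{r}}|D^{2}u|^{2}\,d\mu$. The refined Kato inequality yields the differential inequality $\zeta'(r)\geq\frac{n}{2(n-1)}\frac{\zeta(r)}{r}$, which integrates to $\zeta(r)\lesssim r^{n/(2(n-1))}$ \emph{independently of the order of the critical point}. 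This is what makes $\mathcal{C}_{k}=r_{k}^{(\beta-2)/2}\zeta(r_{k})\to 0$ for $\beta>\frac{n-2}{n-1}$, and the threshold case $\beta=\frac{n-2}{n-1}$ (which you flag as delicate but do not resolve) is then recovered by a separate monotone-convergence limit in $\beta$. So the integrated Kato/coarea argument is not an optional refinement but the load-bearing step your proposal is missing.
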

\begin{proof}
Clearly, $Z$, $\diver Z$, and $\nu$ extend continuously to $\partial\Omega$ (recalling that $\partial\Omega$ is a regular level set of $u$). From \Cref{rem:infty}, we know that $Z$, $\diver Z$, and $\nu$ are smooth on some neighborhood $\mathcal{N}$ of infinity, so that $\mathcal{N}\subseteq V$. Note that \Cref{limits of FGH}, \Cref{asymptotics}, and \eqref{eq:asymptoticsR} assert that all quantities on the right hand sides of \eqref{def:Z} and \eqref{eq:divZ} asymptote to the corresponding quantities for $u_{R_{\Omega},0}$. Thus
\begin{align*}
Z&=Z_{R_{\Omega}}+o\left(\vert x\vert^{-(n-1)}\right),\\
\diver Z&=\diver Z_{R_{\Omega}}+o\left(\vert x\vert^{-n}\right)=o\left(\vert x\vert^{-n}\right)
\end{align*}
as $\vert x\vert\to\infty$ because $\diver Z_{R_{\Omega}}=0$ on $\R^{n}\setminus B_{R_{\Omega}}(0)$. This proves that $\diver Z\in L^{1}(\mathcal{N})$. Similarly, it implies that
\begin{align*}
\delta(Z,\nu)&=-\delta\left(Z_{R_{\Omega}},\frac{Du_{R_{\Sigma},0}}{\vert Du_{R_{\Sigma},0}\vert}\right)+o\left(\vert x\vert^{-(n-1)}\right)\\
&=\delta\left(Z_{R_{\Omega}},\frac{x}{\vert x\vert}\right)+o\left(\vert x\vert^{-(n-1)}\right)
\end{align*}
as $\vert x\vert\to\infty$ on the level sets of $u$ contained in $\mathcal{N}$. In particular, we have asserted that $\delta(Z,\nu)\in L^{1}(\{u=u_{0}\}; d\sigma)$ for all level sets $\{u=u_{0}\}\subset\mathcal{N}$. From \eqref{asydsigma}, we thus directly deduce Claim 2 of \Cref{lem:regZ}. 

If $\crit=\emptyset$, Claims 1 and 3 are obvious, by the above asymptotic assertions and the divergence theorem. Let us hence assume that $\crit\neq\emptyset$. As $\crit$ is a set of vanishing Lebesgue measure by \Cref{rem:sizecrit}, $Z$, $\diver Z$, and $\nu$ are Lebesgue-measurable on $\R^{n}\setminus\Omega$. To study the integrability claims near $\crit$, we aim to apply the monotone convergence theorem. However, as we will see, the arguments we will give only apply when $\beta>\frac{n-2}{n-1}$. We will hence handle the threshold case $\beta=\frac{n-2}{n-1}$ separately at the end of this proof. To apply the monotone convergence theorem for $\beta>\frac{n-2}{n-1}$, we consider a smooth cut-off function $\xi\colon [0,\infty)\to [0,1]$ that satisfies 
\begin{align*}
\begin{cases}
\xi(t)=0 &\text{if } t\leq \frac{1}{2},\\
\xi(t)=1&\text{if } t\geq \frac{3}{2},\\
0<\dot{\xi}(t)<2 &\text{if } \frac{1}{2}<t<\frac{3}{2}.
\end{cases}
\end{align*}
For $\varepsilon>0$, we define $\xi_{\varepsilon}\colon [0,\infty)\to [0,1]$ by setting $\xi_{\varepsilon}(t)\definedas \xi(\frac{t}{\varepsilon})$ and observe that
\begin{align*}
\begin{cases}
\xi_{\varepsilon}(t)=0 &\text{if } t\leq \frac{\varepsilon}{2},\\
\xi_{\varepsilon}(t)=1&\text{if } t\geq \frac{3\varepsilon}{2},\\
0<\dot{\xi}_{\varepsilon}(t)<\frac{2}{\varepsilon} &\text{if } \frac{\varepsilon}{2}<t<\frac{3\varepsilon}{2},\\
\xi_{\varepsilon_0}\leq \xi_{\varepsilon_1} &\text{if } 0<\varepsilon_{1}<\varepsilon_{0},\\
\xi_{\varepsilon}\to1&\text{as }\varepsilon\to0.
\end{cases}
\end{align*}
Furthermore, for every $\varepsilon>0$, we set 
\begin{equation*}
W_{\varepsilon}\definedas\{|Du|^2<\varepsilon\}\supseteq \crit.
\end{equation*}
Observe that $W_{\varepsilon}\subseteq\R^{n}\setminus\Omega$ is open for all $\varepsilon>0$. Moreover, for suitably small $\varepsilon>0$, we claim that $W_{\varepsilon}$ has only finitely many connected components, one being a neighborhood of infinity, the other ones being connected neighborhoods of (one or several) connected components of $\crit$ (see \Cref{rem:sizecrit}): First note that by \Cref{limits of FGH}, for each fixed $\varepsilon>0$ there exists a connected neighborhood of infinity which is contained in $W_{\varepsilon}$. Now suppose towards a contradiction that for a sequence $\{\varepsilon_{k}\}_{k\in\N}$ with $\varepsilon_{k}>0$ for all $k\in\N$ and such that $\varepsilon_{k}\to0$ as $k\to\infty$, there were a non-empty connected component $C_{k}$ of $W_{\varepsilon_{k}}$ such that $\bigcup_{k\in\N}C_{k}$ is bounded and $C_{k}\cap\crit=\emptyset$ for all $k\in\N$. Then there must be a constant $D>0$ and a
sequence $x_{k}\in C_{k}$ such that $\vert x_{k}\vert\leq D$ for all $k\in\N$. As $\overline{B_{D}(0)}$ is compact, there must be a subsequence (denoted without additional subscript for simplicity) with $x_{k}\to x_{*}$ as $k\to\infty$, and $x_{*}\in\crit$ by continuity of $\vert Du\vert^{2}$ and definition of $W_{\varepsilon_{k}}$. In particular, we have $x_{*}\in W_{\varepsilon_{k}}$ for all $k\in\N$. On the other hand, by assumption we have that $x_{*}\notin C_{k}$ for all $k\in\N$. As $\overline{W_{\varepsilon_{k+1}}}\Subset W_{\varepsilon_{k}}$ for all $k\in\N$, there is a uniform constant $E>0$ such that $\delta(x,y)\geq E$ for all $x,y\in W_{\varepsilon_{k}}$ lying in different components of $W_{\varepsilon_{k}}$ and all $k\geq 2$, leading to a contradiction. Hence, for suitably small $\varepsilon>0$, the connected components of $W_{\varepsilon}$ are neighborhoods of (one or more) connected components of $\crit$ plus one which is a neighborhood of infinity and no others. By compactness of $\crit$ (see \Cref{rem:sizecrit}), there can only be finitely many components of $W_{\varepsilon}$ for suitably small $\varepsilon>0$. Moreover, as $\crit\cap\partial\Omega=\emptyset$, $W_{\varepsilon}\cap\partial\Omega=\emptyset$ for suitably small $\varepsilon>0$.

Next, by definition, the boundary $\partial W_{\varepsilon}=\{\vert Du\vert^{2}=\varepsilon\}$ is closed and satisfies $\partial W_{\varepsilon}\cap\crit=\emptyset$. By \Cref{coro:Dusquared}, we know that $\vert Du\vert^{2}(\critDu)$ is discrete and that $\crit\subseteq\critDu$ and thus $0\in\vert Du\vert^{2}(\critDu)$. Hence there exists a threshold $\delta>0$ such that 
\begin{align*}
\vert Du\vert^{2}(x)\geq\delta
\end{align*}
for all $x\in \critDu\setminus\crit$. Then for all $0<\varepsilon<\delta$, the implicit function theorem applied to the smooth function $\vert Du\vert^{2}$ implies that $\partial W_{\varepsilon}$ is a smooth hypersurface with multiple but finitely many components (by boundedness of all components of $W_{\varepsilon}$ except the neighborhood of infinity). 

Now, using $\xi_{\varepsilon}$ for any $0<\varepsilon<\delta$, we cut off $\vert Du\vert^{2}$ near $\crit$, i.e., we study the function $\Theta_{\varepsilon}\colon\R^n\setminus \Omega\to[0,1]$ given by
\begin{align*}
\Theta_{\varepsilon}\definedas\xi_{\varepsilon}\circ |Du|^2,
\end{align*}
with $\supp \Theta_{\varepsilon}\subseteq \overline{W_{\frac{3\varepsilon}{2}}}\setminus W_{\frac{\varepsilon}{2}}$. Now let $\{\varepsilon_{k}\}_{k\in\N}$ be a strictly decreasing sequence of $\varepsilon_{k}>0$ satisfying
$\frac{3\varepsilon_{k}}{2}<\delta$ for all $k\in\N$ and $\varepsilon_{k}\to0$ as $k\to\infty$. With this choice of $\{\varepsilon_{k}\}_{k\in\N}$, $\{\Theta_{\varepsilon_{k}}\}_{k\in\N}\subset L^{1}(\R^{n}\setminus\Omega)$ is an increasing sequence, and we have $\Theta_{\varepsilon_{k}}\to1$ pointwise on $\R^{n}\setminus\Omega$ as $k\to\infty$. Now let $U$ be as in the statement of \Cref{lem:regZ}. If $U\cap\crit=\emptyset$, Claim 3 of \Cref{lem:regZ} holds on $U$ by the divergence theorem, while Claim 1 follows directly when $U\cap\crit=\emptyset$ for all $U$ as in Claim 3 because such $U$ excise $\R^{n}\setminus\Omega$. Hence assume that $U\cap\crit\neq\emptyset$. We aim at applying the divergence theorem to $\Theta_{\varepsilon_{k}}Z$ on $U$ and then take the limit as $k\to\infty$. To understand the volume integral, we use \eqref{def:Z}, \eqref{eq:divZ}, and \eqref{p.14 id 1} to find
\begin{align*}
&\diver(\Theta_{\varepsilon_{k}}Z)\\
&=\underbrace{\left(\dot{\xi}_{\varepsilon_{k}}\circ|Du|^2\right)\left[\frac{\beta}{2}F(u)|Du|^{\beta-2}|D|Du|^2|^2+G(u)|Du|^{\beta} \delta(D|Du|^2,Du)\right]}_{=:\, \mathcal{A}_{k}}+\underbrace{\vphantom{\left[\frac{\beta}{2}\right]}\Theta_{\varepsilon_{k}} \diver Z}_{=:\, \mathcal{B}_{k}}
\end{align*}
on $U$ for all $k\in\N$. Let us first discuss $\mathcal{B}_{k}$. Note that as $\Theta_{\varepsilon_{k}}$ vanishes near $\crit$, $\mathcal{B}_{k}\in L^{1}(U)$ for all $k\in\N$. Hence, by the monotone convergence theorem exploiting that $\diver Z\geq0$ almost everywhere on $U$ by \Cref{divId Thm} and as $\crit$ has Lebesgue measure zero, we find that
\begin{equation*}
\int_{U}\mathcal{B}_{k}\,d\mu=\int_{U}\Theta_{\varepsilon_{k}} \diver( Z)\, d\mu\to \int_{U} \diver( Z)\, d\mu\in\R\cup\{\infty\}
\end{equation*}
as $k\to\infty$. For $\mathcal{A}_{k}$, note that as $\xi_{\varepsilon_{k}}$ vanishes near $\crit$ we have $\mathcal{A}_{k}\in L^{1}(U)$ for all $k\in\N$. On the other hand, note that $\supp\mathcal{A}_{k}\subseteq\overline{W_{\frac{3\varepsilon_{k}}{2}}}\setminus W_{\frac{\varepsilon_{k}}{2}}$ for each $k\in\N$. Now recall that $F(u)\geq0$ in the setting of \Cref{lem:regZ} and that $0<\dot{\xi}_{\varepsilon_{k}}<\frac{2}{\varepsilon_{k}}$ for all $k\in\N$. Moreover, as $F(u)$ and $\vert G(u)\vert$ are continuous and $\vert Du\vert^{2}$ is smooth on $\R^{n}\setminus\Omega$, we know that the maps $s\mapsto\int_{U\cap\partial W_{s}}\!\!F(u)\,d\sigma$ and $s\mapsto\int_{U\cap\partial W_{s}}\!\vert G(u)\vert\,d\sigma$ for $s\in[\frac{\varepsilon_{k}}{2},\frac{3\varepsilon_{k}}{2}]$ are non-negative and Lebesgue-integrable on $[\tfrac{\varepsilon_{k}}{2},\tfrac{3\varepsilon_{k}}{2}]$ for all $k\in\N$. Moreover, $\vert Du\vert^{2}$ is Lipschitz continuous on $U$ as it is smooth on $\R^{n}\setminus\Omega$ and as $U\subset\R^{n}\setminus\Omega$ is bounded. Using the Cauchy--Schwarz inequality, the coarea formula (see e.g. \cite[Theorem 5]{Evans.2022}), and the mean value theorem for integrals, we compute
\begin{align*}
&\int_{U}\left\vert\mathcal{A}_{k}\right\vert d\mu\\
&\leq\!\! \int_{U\cap \left(\overline{W_{\frac{3\varepsilon_{k}}{2}}}\setminus W_{\frac{\varepsilon_{k}}{2}}\right)}\!\!\!\!\!\!\!\!\!\!\!\!\!\!\!\!\left(\dot{\xi}_{\varepsilon_{k}}\circ|Du|^2\right)\!\!\left[\frac{\beta}{2}F(u)|Du|^{\beta-2}|D|Du|^2|+\vert G(u)\vert |Du|^{\beta+1} \right] \vert D|Du|^2\vert\,d\mu\\
&=\int_{\frac{\varepsilon}{2}}^{\frac{3\varepsilon}{2}} \left(\,\,\int_{U\cap\partial W_s}\!\!\!\!\left(\dot{\xi}_{\varepsilon_{k}}\circ|Du|^2\right)\left[\frac{\beta}{2}F(u)|Du|^{\beta-2}|D|Du|^2|+|G(u)||Du|^{\beta+1}\right]d\sigma\!\right)\!ds
\end{align*}

\begin{align*}
&=\frac{\beta}{2}\int_{\frac{\varepsilon_{k}}{2}}^{\frac{3\varepsilon}{2}} \left(\dot{\xi}_{\varepsilon_{k}}(s)s^{\frac{\beta-2}{2}}\!\!\!\!\int_{U\cap\partial W_s}\!\!\!\!F(u)|D|Du|^2|\,d\sigma\!\right)\!ds+\int_{\frac{\varepsilon_{k}}{2}}^{\frac{3\varepsilon_{k}}{2}} \left(\dot{\xi}_{\varepsilon_{k}}(s)s^{\frac{\beta+1}{2}}\!\!\!\!\int_{U\cap\partial W_s}\!\!\!\!|G(u)|\,d\sigma\!\right)\!ds\\
&\leq \frac{\beta}{\varepsilon_{k}}\int_{\frac{\varepsilon_{k}}{2}}^{\frac{3\varepsilon_{k}}{2}} s^{\frac{\beta-2}{2}}\left(\,\,\int_{U\cap\partial W_s}\!\!\!\!F(u)|D|Du|^2|\,d\sigma\!\right)\! ds+\frac{2}{\varepsilon_{k}}\int_{\frac{\varepsilon}{2}}^{\frac{3\varepsilon_{k}}{2}} s^{\frac{\beta+1}{2}}\left(\,\,\int_{U\cap\partial W_s}\!\!\!\!|G(u)|\,d\sigma\!\right)\! ds\\
&= \beta r_{k}^{\frac{\beta-2}{2}}\!\!\!\!\int_{U\cap\partial W_{r_{k}}}\!\!\!\!F(u)\,d\sigma\,\int_{U\cap\partial W_{r_{k}}}\!\!\!\!|D|Du|^2|\,d\sigma+ 2r_{k}^{\frac{\beta+1}{2}}\!\!\!\!\int_{U\cap\partial W_{r_{k}}}\!\!\!\!|G(u)|\,d\sigma\\
&\leq \beta\,\max_{\overline{U}}F(u) \vert U\vert \underbrace{r_{k}^{\frac{\beta-2}{2}}\!\!\!\!\int_{\overline{U}\cap\partial W_{r_{k}}}\!\!\!\!|D|Du|^2|\,d\sigma}_{=:\,\mathcal{C}_{k}}+ 2\max_{\overline{U}}|G(u)| \vert U\vert\,\underbrace{\vphantom{\int_{\overline{U}\cap\partial W_{r_{k}}}\!\!\!\!|D|Du|^2|\,d\sigma}r_{k}^{\frac{\beta+1}{2}}}_{=:\,\mathcal{D}_{k}}
\end{align*}
for some $r_{k}\in(\frac{\varepsilon_{k}}{2},\frac{3\varepsilon_{k}}{2})$ and all $k\in\N$, by continuity of $|D|Du|^2|$, non-negativity and Lebesgue-integrability of $f,g$, and because $\overline{U}\subset \R^{n}\setminus\Omega$ is compact with $F(u)$ and $\vert G(u)\vert$ continuous on $\overline{U}$. Here, $\vert U\vert$ denotes the (finite, Euclidean) volume of $U$. Clearly, $\mathcal{D}_{k}\to0$ as $k\to\infty$ because $\beta>\frac{n-2}{n-1}$. We will now show that $\mathcal{C}_{k}\to0$ as $k\to\infty$, asserting by the above that 
\begin{equation}\label{eq:divconvergence}
\int_{U}\diver\left(\Theta_{\varepsilon_{k}}Z\right)d\mu\to\int_{U}\diver Z\,d\mu\in\R\cup\{\infty\}
\end{equation}
as $k\to\infty$. To analyze $\mathcal{C}_{k}$, we set 
\begin{equation*}
\rho_{U}\definedas\min\left\{\min_{\partial U}{\vert Du\vert^{2}},\delta\right\}>0
\end{equation*}
and choose $k_{0}=k_{0}(U,\Omega)\in\N$ such that $\frac{3\varepsilon_{k}}{2}<\rho_{U}$ for all $k\geq k_{0}$. This in particular implies $r_{k}<\rho_{U}$ for all $k\geq k_{0}$ for the above numbers $r_{k}$ arising from the mean value theorem for integrals. By the definition of $\delta$ and $\rho_{U}$, we find that $\partial U\cap \overline{W_{r}}=\emptyset$ and hence $\partial(U\cap W_{r})=U\cap\partial W_{r}$ for all $0<r<\rho_{U}$. Let us point out that intersecting with $U$ in particular excludes the component of $W_{r}$ which is a neighborhood of infinity. With this in mind, we study the auxiliary function $\zeta\colon(0,\rho_{U})\to\R$ defined by
\begin{equation*}
\zeta(r)\definedas\int_{U\cap W_{r}}\!\!\!\!|D|Du|^2|\,d\sigma
\end{equation*}
so that $\zeta\in L^{\infty}(0,\rho_{U})\subset L^{1}(0,\rho_{U})$ as $\vert Du\vert^{2}$ is continuous on $\R^{n}\setminus\Omega$ and $\overline{U}\subset \R^{n}\setminus\Omega$ is compact. Using that $\partial(U\cap W_{r})$ is a smooth hypersurface with finitely many components, applying the divergence theorem and the Bochner formula \eqref{eq:Bochner}, we get
\begin{align*}
\zeta(r)&=\int_{\partial(U\cap W_{r})}\!\!\!\!\!\!\delta\left(D|Du|^2,\frac{D|Du|^2}{|D|Du|^2|}\right)d\sigma=\int_{U\cap W_r}\!\!\!\!\diver\left( D|Du|^2\right)d\mu\\
&=\int_{U\cap W_r}\!\!\!\!\Delta |Du|^2\, d\mu=2\!\!\int_{U\cap W_r}\!\!\!\!|D^2u|^2\, d\mu
\end{align*} 
for all $0<r<\rho_{U}$ and thus by the coarea formula
\begin{equation*}
\zeta(\overline{r})-\zeta(r)=2\int_{r}^{\overline{r}} \left(\;\int_{U\cap \partial W_{s}}\!\!\!\! \frac{|D^2u|^2}{|D|Du|^2|} \,d\sigma\! \right)\!ds
\end{equation*}
for all $0<r\leq \overline{r}<\rho_{U}$ as $\vert D\vert Du\vert^{2}\vert$ is bounded from below by a positive constant on $\overline{U}\cap (W_{\overline{r}}\setminus W_{r})$ and thus $\frac{\vert D^{2}u\vert^{2}}{\vert D\vert Du\vert^{2}\vert}\in L^{\infty}(U\cap(W_{\overline{r}}\setminus W_{r}))\subset L^{1}(U\cap(W_{\overline{r}}\setminus W_{r}))$. For the same reason in combination with the fundamental theorem of calculus in the Sobolev space $W^{1,1}(\tau,\rho_{U})$, we have $\zeta\in W^{1,1}(\tau,\rho_{U})$ for any fixed $0<\tau<\rho_{U}$ with weak derivative  
\begin{align*}
\zeta'(r)=2\!\!\int_{U\cap\partial W_{r}}\!\!\frac{|D^2u|^2}{|D|Du|^2|}\,d\sigma
\end{align*}
for almost all $\tau<r<\rho_{U}$. In particular, it follows from the $1$-dimensional Sobolev embedding theorem that $\zeta$ is continuous on $(\tau,\rho_{U})$ for all $0<\tau<\rho_{U}$ and hence continuous on $(0,\rho_{U})$. Applying the refined Kato inequality \eqref{kato}, we deduce that
\begin{align*}
\zeta'(r)\geq \frac{2n}{n-1} \int_{U\cap\partial W_{r}}\!\!\frac{|D|Du||^2}{|D|Du|^2|}\,d\sigma=\frac{n}{2(n-1)}\frac{\zeta(r)}{r}
\end{align*}
for almost all $\tau<r<\rho_{U}$, using that $\vert D\vert Du\vert\vert=\frac{\vert D\vert Du\vert^{2}\vert}{2\vert Du\vert }$  and $Du\neq0$ hold on $U\cap\partial W_{r}$. As $0<\tau<\rho_{U}$ is arbitrary, this is equivalent to
\begin{equation*}
(\ln\circ\,\zeta)'(r)\geq \frac{n}{2(n-1)}\ln'(r)
\end{equation*}
for almost all $0<r<\rho_{U}$. Picking a fixed $0<R<\rho_{U}$ for which this inequality holds, this integrates to 
\begin{align*}
\zeta(r)\leq \frac{\zeta(R)}{R^{\frac{n}{2(n-1)}}}\,r^{\frac{n}{2(n-1)}}
\end{align*}
for all $0<r<R$ by continuity of $\zeta$. Hence
\begin{equation}\label{inequ:zeta}
0<r^{\frac{\beta-2}{2}}\zeta(r)\leq \frac{\zeta(R)}{R^{\frac{n}{2(n-1)}}}\,r^{\frac{1}{2}(\beta-\frac{n-2}{n-1})}
\end{equation}
holds for all $0<r<R$. As $\beta>\frac{n-2}{n-1}$, the exponent of $r$ on the right hand side of \eqref{inequ:zeta} is strictly positive so that $\mathcal{C}_{k}=r_{k}^{\frac{\beta-2}{2}}\zeta(r_{k})\to0$ as $k\to\infty$. This proves \eqref{eq:divconvergence} for $\beta>\frac{n-2}{n-1}$. Consider now the surface integral term
\begin{equation*}
\int_{\partial U}\delta(\Theta_{\varepsilon_{k}}Z,\eta)\,d\sigma.
\end{equation*}
As $\partial U\cap\crit=\emptyset$ holds by assumption, $Z$ is continuous on $\partial U$ and thus by compactness of $\partial U$ and by Lebesgue's dominated convergence theorem, we have
\begin{equation*}
\int_{\partial U}\delta(\Theta_{\varepsilon_{k}}Z,\eta)\,d\sigma\to \int_{\partial U}\delta(Z,\eta)\,d\sigma
\end{equation*}
as $k\to\infty$. Taken together and applying the divergence theorem to $\Theta_{\varepsilon_{k}}Z$ on $U$, this establishes both Claims 1 and 3 for $\beta>\frac{n-2}{n-1}$. 

To conclude Claims 1 and 3 for $\beta=\frac{n-2}{n-1}$, we consider a strictly decreasing sequence $\{\beta_{l}\}_{l\in\N}$ with $\beta_{l}>\frac{n-2}{n-1}$ and $\beta_l\to\frac{n-2}{n-1}$ as $l\to\infty$. Denoting $Z$ by $Z_{\beta}$ to be able to carefully consider \eqref{divthm} for different $\beta$, we have already asserted that
\begin{align*}
\int_{U} \diver Z_{\beta_{l}}\,d\mu&=\int_{\partial U} \delta(Z_{\beta_{l}},\eta)\,d\sigma
\end{align*}
for all $l\in\N$. Again using that $\partial U\cap\crit=\emptyset$, we know that $Z_{\beta_{l}}\to Z_{\frac{n-2}{n-1}}$ pointwise on $\partial U$ as $l\to\infty$. As $\{\vert Z_{\beta_{l}}\vert\}_{l\in\N}$ is uniformly bounded on $\partial U$ by compactness of $\partial U$ and continuity of all relevant quantities, we learn from Lebesgue's dominated convergence theorem that 
\begin{equation*}
\int_{\partial U} \delta(Z_{\beta_{l}},\eta)\,d\sigma\to\int_{\partial U} \delta(Z_{\frac{n-2}{n-1}},\eta)\,d\sigma
\end{equation*}
as $l\to\infty$. On the other hand, as $\crit$ has Lebesgue measure zero by \Cref{rem:sizecrit}, we know that  $\diver Z_{\beta_{l}}\to \diver Z_{\frac{n-2}{n-1}}$ on $U$ pointwise almost everywhere as $l\to\infty$. Splitting $U$ into $U\cap W_{1}$ and $U\setminus W_{1}$, Lebesgue's dominated convergence theorem tells us that 
\begin{align*}
\int_{U\setminus W_{1}}\!\!\diver Z_{\beta_{l}}\,d\mu \to \int_{U\setminus W_{1}}\!\!\diver Z_{\frac{n-2}{n-1}}\,d\mu\in\R
\end{align*}
as $l\to\infty$. On $U\cap W_{1}$, we rewrite \eqref{eq:divZ} as
\begin{align*}
\diver Z_{\beta_{l}}&=a_{\beta_{l}}\underbrace{F(u)|Du|^{\beta_{l}-4}\left\vert \frac{2(n-1)\vert Du\vert^{2}}{(n-2)u}Du-D\vert Du\vert^{2}\right\vert^{2}}_{=:\,\mathcal{E}_{l}}\\
&\quad+\beta_{l}\underbrace{F(u)\vert Du\vert^{\beta_{l}-2}\left(\vert D^{2}u\vert^{2}-\frac{n}{n-1}\vert D\vert Du\vert\vert^{2}\right)}_{=:\,\mathcal{F}_{l}}
\end{align*}
and note that $\{\mathcal{E}_{l}\}_{l\in\N},\{\mathcal{F}_{l}\}_{l\in\N}$ are non-negative sequences of Lebesgue-measurable functions on $U\cap W_{1}$ by \Cref{divId Thm}, because $F(u)\geq0$, and by the refined Kato inequality \eqref{kato}. Moreover, both $\{\mathcal{E}_{l}\}_{l\in\N},\{\mathcal{F}_{l}\}_{l\in\N}$ are monotonically increasing sequences on $U\cap W_{1}$ as
\begin{align*}
\frac{\partial \vert Du\vert^{\beta-b}}{\partial \beta}=\ln(\vert Du\vert)\vert Du\vert^{\beta-b}<0
\end{align*}
holds almost everywhere on $U\cap W_{1}$ and for all $b,\beta\in\R$. By the monotone convergence theorem, we hence find that
\begin{align*}
\int_{U\cap W_{1}}\!\!\diver Z_{\beta_{l}}\,d\mu=a_{\beta_{l}}\int_{U\cap W_{1}}\!\!\mathcal{E}_{l}\,d\mu+\beta_{l}\int_{U\cap W_{1}}\!\!\mathcal{F}_{l}\,d\mu\to \int_{U\cap W_{1}}\!\!\diver Z_{\frac{n-2}{n-1}}\,d\mu\in\R\cup\{\infty\}
\end{align*}
as $l\to\infty$. (Note however that we cannot conclude that the term involving $a_{\beta_{l}}$ vanishes in the limit as $\lim_{l\to\infty}\int_{U\cap W_{1}}\mathcal{E}_{l}\,d\mu$ may be infinite. This causes no issues as $a_{\beta_{l}}>0$ and $\mathcal{E}_{l},\mathcal{G}_{l}\geq0$.) This proves Claims 1 and 3 also for $\beta=\frac{n-2}{n-1}$.
\end{proof}

Next, we will deduce an integral identity that will be an important ingredient in the proof of \Cref{geom inequ thm}. A similar result appears in the proof of \cite[Corollary 3.5]{Agostiniani.2020}.
\begin{proposition}[Integral identity]\label{prop int id Rob}
Let $n\geq3$ and let $\Omega\subset \R^n$ be a bounded domain with smooth boundary and electrostatic potential $u$. Let $\beta\geq \frac{n-2}{n-1}$ and let $c,d\in\R$ be such that $c+d\geq0$, $d\geq0$. Consider the vector field $Z$ defined in \eqref{def:Z}, with $F$ and $G$ given by \eqref{solution F}, \eqref{solution G}, respectively. Let $0<u_{0}<u_{1}\leq 1$. Then
\begin{align}
\begin{split}\label{integral identity Rob}
\int_{\{u_{0}<u<u_{1}\}}\!\!\!\!\diver Z\,d\mu=&\!\int_{\{u=u_{1}\}}\!\!\left(\beta F(u) |Du|^{\beta}H+G(u)|Du|^{\beta+1}\right)d\sigma\\
&\quad-\! \int_{\{u=u_{0}\}}\!\!\left(\beta F(u) |Du|^{\beta}H+G(u)|Du|^{\beta+1}\right)d\sigma.
\end{split}
\end{align}
Here, for critical values $u_{*}$ of $u$, $\int_{\{u=u_{*}\}}\left(\beta F(u) |Du|^{\beta}H+G(u)|Du|^{\beta+1}\right)d\sigma$ is defined by the continuous extension of $\eta\mapsto \int_{\{u=\eta\}}\left(\beta F(u) |Du|^{\beta}H+G(u)|Du|^{\beta+1}\right)d\sigma$ to $u_{*}$. With this convention, the map $\eta\mapsto \int_{\{u=\eta\}}\left(\beta F(u) |Du|^{\beta}H+G(u)|Du|^{\beta+1}\right)d\sigma$ is continuous on $(0,1]$.
\end{proposition}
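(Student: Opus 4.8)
The plan is to reduce the two boundary integrals in \eqref{integral identity Rob} to a single pointwise flux computation on regular level sets, then to invoke the divergence theorem \eqref{divthm} from \Cref{lem:regZ} on regions of the form $\{u_{0}<u<u_{1}\}$ between two regular values, and finally to upgrade the resulting identity to \emph{all} $0<u_{0}<u_{1}\leq1$ using that $u$ has only finitely many critical values together with the $L^{1}$-bound on $\diver Z$ supplied by \Cref{lem:regZ}. \emph{Step 1 (flux through a regular level set).} First I would show that on any regular level set $\{u=u_{0}\}$ of $u$ -- which is compact because $u\to0$ at infinity -- the flux of $Z$ through the unit normal $\nu=-Du/\vert Du\vert$ from \eqref{eq:nu} equals
\[
\delta(Z,\nu)=-\bigl(\beta F(u)\vert Du\vert^{\beta}H+G(u)\vert Du\vert^{\beta+1}\bigr).
\]
This is a direct computation from the definitions \eqref{def:Z}, \eqref{solution F}, \eqref{solution G}: using \eqref{p.14 id 1} to rewrite $D\vert Du\vert^{\beta}$, then \eqref{norm rel hess grad norm} together with the formula \eqref{mean curv eucl} for $H$ on a level set, one gets $\delta(D\vert Du\vert^{\beta},\nu)=-\tfrac{\beta}{2}\vert Du\vert^{\beta-3}\delta(D\vert Du\vert^{2},Du)=-\beta\vert Du\vert^{\beta-3}D^{2}u(Du,Du)=-\beta\vert Du\vert^{\beta}H$, while $\delta(\vert Du\vert^{\beta}Du,\nu)=-\vert Du\vert^{\beta+1}$.

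\emph{Step 2 (divergence theorem between regular values).} Next, for regular values $u_{0}<u_{1}\leq1$ I would put $U\definedas\{u_{0}<u<u_{1}\}$; this is bounded (as $\{u>u_{0}\}$ is bounded) and, by the implicit function theorem, has smooth boundary $\partial U=\{u=u_{0}\}\cup\{u=u_{1}\}$ with $\partial U\cap\crit=\emptyset$. Applying \eqref{divthm} (to each of the finitely many connected components of $U$) and noting that the unit normal pointing out of $U$ is $Du/\vert Du\vert=-\nu$ on $\{u=u_{1}\}$ and $-Du/\vert Du\vert=\nu$ on $\{u=u_{0}\}$, Step 1 immediately yields \eqref{integral identity Rob} whenever $u_{0},u_{1}$ are regular values.

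\emph{Step 3 (continuity and extension to all of $(0,1]$).} I would then set $\Psi(\eta)\definedas\int_{\{\eta<u<1\}}\diver Z\,d\mu$ for $\eta\in(0,1]$. Since $\diver Z\in L^{1}(\R^{n}\setminus\Omega)$ by \Cref{lem:regZ} and $\mathbf{1}_{\{\eta_{k}<u<1\}}\to\mathbf{1}_{\{\eta<u<1\}}$ pointwise almost everywhere as $\eta_{k}\to\eta$ (the exceptional set $\{u=\eta\}$ is Lebesgue-null), dominated convergence makes $\Psi$ continuous on $(0,1]$, with $\Psi(1)=0$. Taking $u_{1}=1$ in Step 2 (recall $\partial\Omega=\{u=1\}$ is a regular level set), the function $\Phi(\eta)\definedas\int_{\{u=\eta\}}\bigl(\beta F(u)\vert Du\vert^{\beta}H+G(u)\vert Du\vert^{\beta+1}\bigr)d\sigma$ satisfies $\Phi(\eta)=\Phi(1)-\Psi(\eta)$ at every regular value $\eta$. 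As $\crit$ is finite by \Cref{coro:finite}, its image under $u$ is finite, so the regular values are dense in $(0,1]$; hence $\eta\mapsto\Phi(1)-\Psi(\eta)$ is the unique continuous extension of $\Phi$, which I would adopt as the definition of $\Phi$ at critical values, settling the continuity claim. Finally, for arbitrary $0<u_{0}<u_{1}\leq1$, since $\{u=u_{1}\}$ is Lebesgue-null one has $\Psi(u_{0})-\Psi(u_{1})=\int_{\{u_{0}<u<u_{1}\}}\diver Z\,d\mu$, and therefore $\int_{\{u_{0}<u<u_{1}\}}\diver Z\,d\mu=\Phi(u_{1})-\Phi(u_{0})$, which is \eqref{integral identity Rob}.

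Steps 1 and 2 are routine; the real content -- and the main obstacle -- is Step 3, i.e.\ that $\Phi$, a priori meaningful only at regular values where $\{u=\eta\}$ is a smooth hypersurface, extends continuously and compatibly with the divergence theorem across critical values. What makes this work is not the argument itself but the input it relies on: the $L^{1}$-integrability of $\diver Z$ near $\crit$ from \Cref{lem:regZ}, which is exactly what controls the flux through level sets passing close to critical points and forces the right-hand side of \eqref{integral identity Rob} to be absolutely continuous in $u_{0}$ and $u_{1}$.
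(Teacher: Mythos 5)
Your proof is correct, and Steps 1 and 2 track the paper's argument exactly: the pointwise flux identity $\delta(Z,\nu)=-\beta F(u)|Du|^{\beta}H-G(u)|Du|^{\beta+1}$ is precisely \eqref{eq:Hidendity}, and applying the divergence theorem \eqref{divthm} on $\{u_{0}<u<u_{1}\}$ between regular values is the paper's starting point as well. Step 3 is a genuine variant. The paper works directly with the boundary-integral function (its $\Psi$, your $\Phi$): near a critical value $u_{*}$ it writes $\Phi(\eta)=\Phi(u_{*}+\varepsilon)-\int_{\{\eta<u<u_{*}+\varepsilon\}}\diver Z\,d\mu$, uses $\diver Z\geq0$ to get monotonicity and hence existence of the one-sided limits of $\Phi$ at $u_{*}$, and then appeals to absolute continuity of the Lebesgue integral to show the two limits agree. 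You instead introduce the auxiliary volume-integral function $\Psi(\eta)\definedas\int_{\{\eta<u<1\}}\diver Z\,d\mu$, prove it is continuous on $(0,1]$ by dominated convergence (using only $\diver Z\in L^{1}(\R^{n}\setminus\Omega)$ from \Cref{lem:regZ} and the fact that every level set $\{u=\eta\}$ is Lebesgue-null), and define $\Phi\definedas\Phi(1)-\Psi$ throughout; the continuity claim and \eqref{integral identity Rob} for arbitrary $0<u_{0}<u_{1}\leq1$ then drop out simultaneously. This is a mild streamlining: it avoids the one-sided-limit bookkeeping and, unlike the paper's route, does not use the sign of $\diver Z$ at all in this step. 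Both proofs ultimately rest on the same input, the $L^{1}$-estimate from \Cref{lem:regZ}, which you correctly single out as the real content.
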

\begin{proof}
First, assume that $u_{0},u_{1}$ are regular values of $u$. From \Cref{lem:regZ}, we then know that
\begin{align*}
\int_{\{u_{0}<u<u_{1}\}}\!\!\!\!\diver Z\,d\mu&=-\!\int_{\{u=u_{1}\}}\!\!\delta(Z,\nu)\,d\sigma+\int_{\{u=u_{0}\}}\!\!\delta(Z,\nu)\,d\sigma.
\end{align*}
Applying \eqref{p.14 id 1}, \eqref{norm rel hess grad norm}, and \eqref{mean curv eucl}, we compute
\begin{align}\label{eq:Hidendity}
\delta(Z,\nu)&=-\beta F(u) \vert Du\vert^{\beta} H-G(u)|Du|^{\beta+1}
\end{align}
which implies the claim for non-critical values $u_{0},u_{1}$. If $u_{*}$ is a critical value of $u$, we know by \Cref{coro:finite} and the fact that $\partial\Omega$ is a regular level set of $u$ that there is a neighborhood $(u_{*}-2\varepsilon,u_{*}+2\varepsilon)$ of $u_{*}$ for some $\varepsilon>0$ which contains only regular values of $u$ (except $u_{*}$). We introduce 
\begin{equation*}
\Psi\colon(u_{*}-2\varepsilon,u_{*}+2\varepsilon)\setminus\{u_{*}\}\to\R\colon \eta\mapsto \int_{\{u=\eta\}}\!\!\left(\beta F(u)|Du|^{\beta}H+G(u)|Du|^{\beta+1}\right)d\sigma
\end{equation*}
and claim that $\Psi$ continuously extends to $u_{*}$. Applying this to $u_{*}=u_{0}$ and/or $u_{*}=u_{1}$ and exploiting that $\diver Z\in L^{1}(\R^{n}\setminus\Omega)$ by \Cref{lem:regZ}, this implies that \eqref{integral identity Rob} also applies to critical values $u_{0},u_{1}$ in the sense claimed in \Cref{prop int id Rob}.

Let us now verify the claim that $\Psi$ continuously extends to $u_{*}$. First, $\Psi$ is clearly well-defined away from $u_{*}$ by  \eqref{eq:Hidendity} and \Cref{lem:regZ}. Applying \eqref{integral identity Rob} to $u_{0}=\eta$, $u_{1}=u_{*}+\varepsilon$ and to $u_{0}=u_{*}-\varepsilon$, $u_{1}=\eta$, we find
\begin{equation*}
\Psi(\eta)=\Psi(u_{*}+\varepsilon)-\!\int_{\{\eta<u<u_{*}+\varepsilon\}}\!\!\!\!\diver Z\,d\mu=\Psi(u_{*}-\varepsilon)+\!\int_{\{u_{*}-\varepsilon<u<\eta\}}\!\!\!\!\diver Z\,d\mu
\end{equation*}
for $\eta\in(u_{*}-\varepsilon,u_{*}+\varepsilon)\setminus\{u_{*}\}$. As $\diver Z\geq0$ almost everywhere on $\R^{n}\setminus\Omega$ by \Cref{divId Thm} and \Cref{rem:sizecrit}, $\Psi$ is monotonically increasing on $(u_{*}-\varepsilon,u_{*}+\varepsilon)\setminus\{u_{*}\}$ and satisfies
\begin{equation*}
\Psi(u_{*}-\varepsilon)\geq \Psi(\eta) \geq \Psi(u_{*}+\varepsilon)
\end{equation*}
for all $\eta\in(u_{*}-\varepsilon,u_{*}+\varepsilon)\setminus\{u_{*}\}$. Thus the limits $\lim_{\eta\to u_{*}^{\pm}}\Psi(\eta)$ exist and are finite. Moreover, we see that
\begin{equation*}
\Psi(u_{*}+\varepsilon)-\Psi(u_{*}-\varepsilon)=\!\int_{\{u_{*}-\varepsilon<u<u_{*}+\varepsilon\}}\!\!\!\!\diver Z\,d\mu
\end{equation*}
holds for all suitably small $\varepsilon>0$, hence by absolute continuity of the Lebesgue integral and as $\diver Z\in L^{1}(\R^{n}\setminus\Omega)$, $\Psi$ can be continuously extended to $u_{*}$ by $\lim_{\eta\to u_{*}^{+}}\Psi(\eta)=\lim_{\eta\to u_{*}^{-}}\Psi(\eta)$. As this last identity also holds for non-critical values of $u$, the continuity claim readily follows.
\end{proof}

Finally, we apply \Cref{prop int id Rob} to show our main result.
\begin{proof}[Proof of \Cref{geom inequ thm}]
To show the claimed inequality \eqref{geom inequ}, we apply the integral identity \eqref{integral identity Rob} to $u_{1}=1$, i.e., we evaluate the first boundary integral at $\partial\Omega$. Also, we take $\lim_{\tau_0 \to \infty}u_{0}$, i.e., we evaluate the second boundary integral in  \eqref{integral identity Rob} at infinity which is permitted as $\diver Z\in L^{1}(\R^{n}\setminus\Omega)$ by \Cref{lem:regZ}. Appealing again to \Cref{prop int id Rob}, the definitions of $F$ and $G$ in \eqref{solution F}, \eqref{solution G}, and to the asymptotics established in \Cref{limits of FGH}, we find
\begin{align*}
\int_{\R^{n}\setminus\Omega}\!\!\diver Z\,d\mu
&= \beta (c+d)\int_{\partial\Omega} |Du|^{\beta}H\,d\sigma+\left(-\frac{n-1}{n-2}(c+d)\beta+d\right)\int_{\partial\Omega} |Du|^{\beta+1}\,d\sigma\\
&\quad-d(n-2)^{\beta+1}\capa^{\frac{n-2-\beta}{n-2}}\sarea.
\end{align*}
On the other hand, \Cref{divId Thm} together with \Cref{rem:sizecrit} establishes that $\diver Z\geq0$ almost everywhere on $\R^{n}\setminus\Omega$ which proves \eqref{geom inequ}. Finally, let us address the rigidity claim of \Cref{geom inequ thm} (unless $c=d=0$). When $\Omega$ is a round ball, we know from \Cref{divId Thm} that $\diver Z=0$ on $\R^{n}\setminus\Omega$ so that equality holds in \eqref{geom inequ}. On the other hand, if equality holds in \eqref{geom inequ}, we learn from the above identity that $\diver Z=0$ almost everywhere on $\R^{n}\setminus\Omega$. By \Cref{rem:sizecrit} and continuity of $\diver Z$ away from $\crit$, this gives that $\diver Z=0$ on $V=\R^{n}\setminus(\Omega\cup\crit)$ or in other words, equality holds in \eqref{divPos}. By  \Cref{divId Thm}, we know that this implies that $\Omega$ is a round ball.
\end{proof}

\section{Construction of monotone functionals and comparison to the monotonicity formula approach}\label{section discussion}
In \Cref{prop int id Rob}, we gave meaning to the surface integral corresponding to the divergence of the vector field $Z$ from \eqref{def:Z} also on critical level sets of the electrostatic potential $u$. We will now use this to define functionals $\mathcal{H}^{c,d}_{\beta}$ on the level sets of $u$ which are monotone and which we can then compare to the monotone functionals $\mathcal{F}_{\beta}$ introduced in \cite{Agostiniani.2020}. The relation between these two families of functionals turns out to be very similar to the relation of the monotone functionals studied in \cite{Cederbaum.b} and \cite{Agostiniani.2017} on the level sets of the static potential of an asymptotically flat static vacuum system in general relativity, see \cite{Cederbaum.b}.

Specifically, for a bounded domain $\Omega\subset\R^{n}$ with smooth boundary and electrostatic potential $u$ and for parameters $\beta\geq\frac{n-2}{n-1}$ and $c,d\in\R$ with $c+d\geq0$, $d\geq0$, we define the functional $\mathcal{H}^{c,d}_{\beta}\colon[1,\infty)\to\R$ by
\begin{align}
\mathcal{H}_\beta^{c,d}(\tau)\definedas \int_{\left\lbrace u=\frac{1}{\tau}\right\rbrace}\!\!\left( \beta F(u) |Du|^{\beta}H+G(u)|Du|^{\beta+1}\right)d\sigma,
\end{align}
using the convention established in \Cref{prop int id Rob} for critical values $\frac{1}{\tau}$ of $u$. As $\diver Z\geq0$ almost everywhere in $\R^{n}\setminus\Omega$ by \Cref{divId Thm} and \Cref{rem:sizecrit}, \Cref{prop int id Rob} implies that $\mathcal{H}^{c,d}_{\beta}$ is continuous and monotonically decreasing.
\begin{corollary}
Let $\Omega\subset\R^{n}$ be a bounded domain with smooth boundary and electrostatic potential $u$, and let $\beta\geq\frac{n-2}{n-1}$ and $c,d\in\R$ with $c+d\geq0$, $d\geq0$. Then $\mathcal{H}_{\beta}^{c,d}$ is continuous and monotonically decreasing.
\end{corollary}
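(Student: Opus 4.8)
The plan is to read both assertions off directly from the integral identity of \Cref{prop int id Rob}, together with the sign of $\diver Z$ supplied by \Cref{divId Thm}. The key observation is essentially bookkeeping: by definition $\mathcal{H}_{\beta}^{c,d}(\tau)$ equals the level-set integral $\int_{\{u=\eta\}}\bigl(\beta F(u)|Du|^{\beta}H+G(u)|Du|^{\beta+1}\bigr)\,d\sigma$ appearing on the right-hand side of \eqref{integral identity Rob}, evaluated at $\eta=\tfrac{1}{\tau}$, and $\tau\mapsto\tfrac{1}{\tau}$ is an order-reversing homeomorphism of $[1,\infty)$ onto $(0,1]$.

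For monotonicity I would fix $1\le\tau_{0}\le\tau_{1}$ and set $u_{1}\definedas\tfrac{1}{\tau_{0}}$ and $u_{0}\definedas\tfrac{1}{\tau_{1}}$, so that $0<u_{0}\le u_{1}\le1$. Feeding these into \eqref{integral identity Rob} — with the continuous-extension convention of \Cref{prop int id Rob} understood at any critical values among $u_{0},u_{1}$ — yields
\begin{equation*}
\mathcal{H}_{\beta}^{c,d}(\tau_{0})-\mathcal{H}_{\beta}^{c,d}(\tau_{1})=\int_{\{u_{0}<u<u_{1}\}}\diver Z\,d\mu .
\end{equation*}
By \eqref{divPos} in \Cref{divId Thm}, $\diver Z\ge0$ on $\R^{n}\setminus(\overline{\Omega}\cup\crit)$, and since $\crit$ is finite (\Cref{coro:finite}) and hence Lebesgue-null, $\diver Z\ge0$ almost everywhere on $\R^{n}\setminus\Omega$; thus the right-hand side above is nonnegative and $\mathcal{H}_{\beta}^{c,d}$ is monotonically decreasing.

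For continuity I would simply invoke the last line of \Cref{prop int id Rob}, which states that $\eta\mapsto\int_{\{u=\eta\}}\bigl(\beta F(u)|Du|^{\beta}H+G(u)|Du|^{\beta+1}\bigr)\,d\sigma$ is continuous on $(0,1]$; composing with the continuous map $\tau\mapsto\tfrac{1}{\tau}\colon[1,\infty)\to(0,1]$ gives continuity of $\mathcal{H}_{\beta}^{c,d}$. There is really no obstacle here: all the substantive analysis — the divergence inequality \eqref{divPos}, the integrability $\diver Z\in L^{1}(\R^{n}\setminus\Omega)$ from \Cref{lem:regZ}, the divergence theorem across critical level sets, and the continuous extension of the level-set functional to critical values — has already been carried out in \Cref{divId Thm,lem:regZ,prop int id Rob}. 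The only subtlety worth flagging is the orientation bookkeeping forced by $\tau\mapsto\tfrac{1}{\tau}$: increasing $\tau$ pushes the level set $\{u=\tfrac{1}{\tau}\}$ outward toward infinity, so nonnegativity of $\diver Z$ on the region it encloses together with $\partial\Omega$ forces $\mathcal{H}_{\beta}^{c,d}$ to decrease rather than increase.
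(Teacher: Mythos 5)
Your proposal is correct and follows essentially the same argument as the paper, which proves this corollary in a single sentence by citing the nonnegativity of $\diver Z$ (from \Cref{divId Thm} and \Cref{lem:regZ}) together with the integral identity and continuity statement in \Cref{prop int id Rob}. You have merely unpacked the sign and orientation bookkeeping that the paper leaves implicit.
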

Next, we recall the functionals $\mathcal{F}_{\beta}\colon[1,\infty)\to\R$ from \cite{Agostiniani.2020}, introduced in the exact same context. These are given by 
\begin{align}
 \mathcal{F}_{\beta}(\tau)\definedas\tau^{\frac{n-1}{n-2}\beta}\!\!\int_{\left\lbrace u=\frac{1}{\tau}\right\rbrace}\!\!|D u|^{\beta+1}\,d\sigma.
\end{align}
By \cite[Theorem 1.1]{Agostiniani.2020}, $\mathcal{F}_{\beta}$ is continuously differentiable with derivative given by
\begin{align}
\mathcal{F}_{\beta}'(\tau)=-\beta \tau^{\frac{n-1}{n-2}\beta-2}\!\int_{\left\lbrace u=\frac{1}{\tau}\right\rbrace}\!\!|Du|^{\beta}\left[H-\frac{n-1}{n-2}\,|D\log u| \right] d\sigma,
\end{align}
defined for critical values $\frac{1}{\tau}$ of $u$ by continuous extension as in \Cref{prop int id Rob}. Using the definitions of $F$, $G$ from \eqref{solution F}, \eqref{solution G}, one directly sees that
\begin{align}\label{eq:Hbeta}
\mathcal{H}_\beta^{c,d}&=-(c+d\tau)\mathcal{F}_{\beta}'+d\mathcal{F}_{\beta},\\\label{eq:Fbeta'}
\mathcal{F}_{\beta}'&=-\mathcal{H}_\beta^{1,0},\\\label{eq:Fbeta}
\mathcal{F}_{\beta}&=\mathcal{H}^{-1,1}_{\beta}+(1-\cdot)\mathcal{H}_{\beta}^{1,0}.
\end{align}
Now recall from the proof of \Cref{geom inequ thm} that $\mathcal{H}^{c,0}_{\beta}(\tau)\to0$ as $\tau\to\infty$, so that by monotonicity of $\mathcal{H}^{c,0}_{\beta}$ we have that $\mathcal{H}^{c,0}_{\beta}\geq0$ which gives $\mathcal{F}_{\beta}'\leq0$ via \eqref{eq:Fbeta'}, or in other words monotonicity of $\mathcal{F}_{\beta}$. Moreover, \eqref{eq:Fbeta'} combined with monotonicity of $\mathcal{H}^{c,0}_{\beta}$ easily gives that $\mathcal{F}_{\beta}$ is convex. We have hence also reproduced \cite[Theorem 1.1]{Agostiniani.2020}, once we note that the more extensive rigidity claims they make follow precisely by the same arguments we gave at the end of the proof of \Cref{divId Thm}, applied to the corresponding level set of $u$ instead of to $\partial\Omega$, appealing to the continuity of $\mathcal{H}^{c,d}_{\beta}$ in case that said level set intersects $\crit$.

\begin{corollary}
The Monotonicity-Rigidity Theorem \cite[Theorem 1.1]{Agostiniani.2020} follows from our approach (except for the claim about the formula for the second derivative of $\mathcal{F}_{\beta}$ which we don't address).
\end{corollary}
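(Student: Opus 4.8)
The plan is to read off all the parts of \cite[Theorem 1.1]{Agostiniani.2020} that we claim to recover --- monotonicity and convexity of $\mathcal{F}_{\beta}$ together with the associated rigidity --- purely from the monotonicity and continuity of the functionals $\mathcal{H}^{c,d}_{\beta}$ established above, the algebraic relations \eqref{eq:Fbeta'}--\eqref{eq:Fbeta}, and the divergence inequality \Cref{divId Thm}, without recomputing $\mathcal{F}_{\beta}'$ by hand and without touching $\mathcal{F}_{\beta}''$. Throughout I take the $C^{1}$-regularity of $\mathcal{F}_{\beta}$ and the stated formula for $\mathcal{F}_{\beta}'$ (equivalently, the identity \eqref{eq:Fbeta'}) as given, since these are exactly what we use but do not reprove.

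First I would specialize the corollary just proved to $c=1$, $d=0$: then $\mathcal{H}^{1,0}_{\beta}$ is continuous and monotonically non-increasing on $[1,\infty)$. Next I would extract from the proof of \Cref{geom inequ thm} --- where the surface integral ``at infinity'' was evaluated via \Cref{limits of FGH} and \Cref{asymptotics} --- the limit $\lim_{\tau\to\infty}\mathcal{H}^{c,d}_{\beta}(\tau)=d(n-2)^{\beta+1}\sarea\,\capa^{\frac{n-2-\beta}{n-2}}$, so that in particular $\mathcal{H}^{1,0}_{\beta}(\tau)\to0$ as $\tau\to\infty$. Since $\mathcal{H}^{1,0}_{\beta}$ is non-increasing and vanishes in the limit, it is non-negative on $[1,\infty)$, hence by \eqref{eq:Fbeta'} we get $\mathcal{F}_{\beta}'=-\mathcal{H}^{1,0}_{\beta}\leq0$, which is the monotonicity statement. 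Because $-\mathcal{F}_{\beta}'=\mathcal{H}^{1,0}_{\beta}$ is itself non-increasing, $\mathcal{F}_{\beta}'$ is non-decreasing, which (for a $C^{1}$ function) is convexity of $\mathcal{F}_{\beta}$; note this uses only monotonicity of $\mathcal{H}^{1,0}_{\beta}$, not the formula for $\mathcal{F}_{\beta}''$.

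For rigidity, suppose $\mathcal{F}_{\beta}(\tau_{0})=\mathcal{F}_{\beta}(\tau_{1})$ for some $1\leq\tau_{0}<\tau_{1}$. By monotonicity $\mathcal{F}_{\beta}$ is then constant on $[\tau_{0},\tau_{1}]$, so $\mathcal{F}_{\beta}'\equiv0$ there, and by \eqref{eq:Fbeta'} and continuity $\mathcal{H}^{1,0}_{\beta}\equiv0$ on $[\tau_{0},\tau_{1}]$; feeding this into \eqref{integral identity Rob} of \Cref{prop int id Rob} together with $\diver Z\geq0$ from \Cref{divId Thm} forces $\diver Z=0$ almost everywhere on $\{1/\tau_{1}<u<1/\tau_{0}\}$, hence everywhere on that region minus $\crit$ by continuity of $\diver Z$ off $\crit$ and finiteness of $\crit$ (\Cref{coro:finite}). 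From here I would repeat the rigidity step of the proof of \Cref{divId Thm} verbatim: vanishing of $\diver Z$ is equality in \eqref{divPos}, which forces equality in the refined Kato inequality \eqref{kato}, so by \Cref{kato prop} every regular level set of $u$ in that region is totally umbilic; such level sets are connected and closed (no bounded components arise, since $u=1$ on $\partial\Omega$ and $u\to0$ at infinity, by the maximum principle, exactly as in that proof), hence round spheres, so the region is a spherical annulus and $\Omega$ is a round ball. The converse --- constancy of $\mathcal{F}_{\beta}$, and of all $\mathcal{H}^{c,d}_{\beta}$, for a round ball --- follows from \eqref{radial u} as already noted for \eqref{divPos}.

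I do not anticipate a genuine obstacle: all the machinery is in place and the argument is assembly of \Cref{divId Thm}, \Cref{prop int id Rob}, \Cref{limits of FGH}, and \eqref{eq:Fbeta'}--\eqref{eq:Fbeta}. The one point requiring care --- the only non-mechanical step --- is propagating the umbilicity conclusion across any critical level sets of $u$ that happen to lie strictly between $\{u=1/\tau_{1}\}$ and $\{u=1/\tau_{0}\}$; here one leans on the continuity of $\mathcal{H}^{c,d}_{\beta}$ (hence of the level-set integrals) established in \Cref{prop int id Rob} and on the isolatedness of critical values from \Cref{coro:finite}, exactly as indicated in the discussion just before this corollary.
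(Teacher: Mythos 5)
Your proposal is correct and follows the paper's own argument essentially line by line: $\mathcal{H}^{1,0}_\beta$ is non-negative, monotone and tends to $0$ at infinity, so $\mathcal{F}_\beta'=-\mathcal{H}^{1,0}_\beta\leq 0$ and is non-decreasing, giving monotonicity and convexity of $\mathcal{F}_\beta$; and rigidity comes from the equality analysis in \Cref{divId Thm} restricted to $\{1/\tau_1<u<1/\tau_0\}$ via \Cref{prop int id Rob}, invoking continuity of $\mathcal{H}^{c,d}_\beta$ across critical values exactly as the paper indicates. Your extra sentence about ruling out ``bounded components'' of level sets by the maximum principle is not in the paper and is phrased somewhat imprecisely (every level set $\{u=u_0\}$ with $0<u_0<1$ is automatically compact), but the intended claim --- that a totally umbilic level set here must be a single round sphere, since two nested or disjoint spherical components would force $u\equiv u_0$ on an open region by the maximum principle --- is correct and fills in a detail the paper leaves implicit.
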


Looking at it the opposite way around, i.e., exploiting \cite[Theorem 1.1]{Agostiniani.2020} and defining $\mathcal{H}^{c,d}_{\beta}$ via \eqref{eq:Hbeta}, allows to conclude continuity of $\mathcal{H}^{c,d}_{\beta}$. In addition, \eqref{eq:Fbeta'} gives monotonicity of $\mathcal{H}_{\beta}^{1,0}$ and with this \eqref{eq:Fbeta} gives monotonicity of $\mathcal{H}^{-1,1}_{\beta}$ and thus of all $\mathcal{H}^{c,d}_{\beta}$. However, it is not clear how to proceed from there without applying \Cref{divId Thm}  (and parts of the analysis from the proof of \Cref{lem:regZ}) as we have used $\diver Z\geq0$ significantly in the proof of \Cref{prop int id Rob} via (the proof of) \Cref{lem:regZ}.

More abstractly, we have seen that the potential theoretic approach suggested by Agostiniani and Mazzieri~\cite{Agostiniani.2020}  lends itself not only to prove the Willmore inequality and other geometric inequalities via the very natural monotone functionals $\mathcal{F}_{\beta}$ they use, but also more directly via the divergence inequality and the divergence theorem. Indeed, when applying \Cref{prop int id Rob} in the proof of \Cref{geom inequ thm}, we could have instead directly appealed to \Cref{lem:regZ}, thereby completely avoiding to make sense of surface integrals over level sets for critical values of $u$ and needing to compute derivatives of functionals. Besides this difference of looking at the problem via the divergence theorem (in Euclidean space) rather than via monotone functionals over possibly critical level sets, we also do not need to switch into a conformal picture called the \emph{cylindrical ansatz} in \cite[(2.1),(2.2)]{Agostiniani.2020} (and apply the divergence theorem to two separate vector fields in the conformal setting \cite[Section 3]{Agostiniani.2020}). This avoids tedious computations and subtle analytic arguments\footnote{in particular asymptotic complications in the form of the growth condition needed in \cite[Corollary 3.6]{Agostiniani.2020} and the interior gradient estimate \cite[Proposition 2.3]{Agostiniani.2020}}  related to the conformal change and the asymptotically cylindrical nature of the conformal picture. Moreover, the rigidity argument becomes extremely simple in our case compared to the application of the splitting theorem \cite[Theorem 4.1-(i)]{Agostiniani.2015} and the necessity of a more careful handling of the threshold case $\beta=\frac{n-2}{n-1}$ in the analysis of the rigidity case. 

In summary, exploiting the potential theoretic setup by using the divergence inequality \eqref{RobDivId5} instead of using the beautiful cylindrical ansatz presents a significant simplification to proving the Willmore inequality. Moreover, our result suggests that the study of apparently less natural functionals ($\mathcal{H}^{c,d}_{\beta}$ rather than $\mathcal{F}_{\beta}$) can improve the understanding of the more natural functionals $\mathcal{F}_{\beta}$. This is very similar to what is found in \cite{Cederbaum.b} for black hole uniqueness and may lend itself to other problems studied with the help of monotone functionals in potential theoretic settings.\\[-4ex]

\bibliographystyle{amsplain}
\bibliography{Willmore}
	
\end{document}